\theoremstyle{plain}
\newtheorem{prop}{Proposition}
\newtheorem{thm}[prop]{Theorem}
\newtheorem{cor}[prop]{Corollary}
\newtheorem{lem}[prop]{Lemma}
\newtheorem*{step}{Claim}
\theoremstyle{definition}
\newtheorem{defi}[prop]{Definition}
\theoremstyle{remark}
\newtheorem{rem}[prop]{Remark}
\newtheorem{example}[prop]{Example}
\newtheorem{constr}[prop]{Construction}
\newtheorem{claim}{Claim}[prop]
\numberwithin{prop}{section}
\numberwithin{ques}{section}
\numberwithin{equation}{section}
\DeclareMathOperator{\HNN}{HNN}
\DeclareMathOperator{\ourHNN}{\widetilde{HNN}}
\DeclareMathOperator{\pHNN}{\overline{HNN}}
\DeclareMathOperator{\Subgr}{Subgr}
\DeclareMathOperator{\Sym}{Sym}
\DeclareMathOperator{\Ker}{Ker}
\DeclareMathOperator{\iid}{id}
\DeclareMathOperator{\Stab}{Stab}
\newcommand{\Bhat}{\hat{B}}
\newcommand{\That}{\hat{T}}
\newcommand{\alphahat}{\hat{\alpha}}
\newcommand{\gammahat}{\hat{\gamma}}
\newcommand{\varphihat}{\hat{\varphi}}
\newcommand{\psihat}{\hat{\psi}}
\newcommand{\xhat}{\hat{x}}
\newcommand{\yhat}{\hat{y}}
\newcommand{\Yhat}{\hat{Y}}
\newcommand{\Ahat}{\hat{A}}
\newcommand{\ahat}{\hat{a}}
\newcommand{\bhat}{\hat{b}}
\newcommand{\Xhat}{\hat{X}}
\newcommand{\ca}[1]{\mathcal{#1}}
\newcommand{\F}{\mathbb{F}}
\newcommand{\N}{\mathbb{N}}
\newcommand{\bfB}{\mathbf{B}}
\newcommand{\bfA}{\mathbf{A}}
\newcommand{\bfG}{\mathbf{G}}
\def\bfAhat{{\hat\bfA}}
\def\bfBhat{{\hat\bfB}}
\newcommand{\suchthat}{\mathop{|\,}}
\newcommand{\trsl}{\bullet}
\newcommand{\Abar}{\bar{A}}
\newcommand{\Bbar}{\bar{B}}
\newcommand{\Fbar}{\bar{F}}
\newcommand{\Gbar}{\bar{G}}
\newcommand{\Mbar}{\bar{M}}
\newcommand{\Tbar}{\bar{T}}
\newcommand{\psibar}{\bar{\psi}}
\newcommand{\U}{\ca{U}}
\newcommand{\caL}{\ca{L}}
\newcommand{\tG}{\tilde{G}}
\newcommand{\tF}{\tilde{F}}
\newcommand{\tN}{\tilde{N}}
\newcommand{\tS}{\tilde{S}}
\newcommand{\tU}{\tilde{U}}
\newcommand{\trho}{\tilde{\rho}}
\newcommand{\argu}{\hbox to 7truept{\hrulefill}}
\newcommand{\G}{\ca{G}}
\newcommand{\Hc}{\ca{H}}
\newcommand{\A}{\ca{A}}
\newcommand{\B}{\ca{B}}
\newcommand{\C}{\ca{C}}
\newcommand{\normal}{\triangleleft}
\newcommand\myatop[2]{\genfrac{}{}{0pt}{}{#1}{#2}}
\def\moverlay{\mathpalette\mov@rlay}
\def\mov@rlay#1#2{\leavevmode\vtop{%
   \baselineskip\z@skip \lineskiplimit-\maxdimen
   \ialign{\hfil$\m@th#1##$\hfil\cr#2\crcr}}}
\newcommand{\charfusion}[3][\mathord]{
    #1{\ifx#1\mathop\vphantom{#2}\fi
        \mathpalette\mov@rlay{#2\cr#3}
      }
    \ifx#1\mathop\expandafter\displaylimits\fi}
\newcommand{\dotcup}{\charfusion[\mathbin]{\cup}{\cdot}}
\newcommand{\bigdotcup}{\charfusion[\mathop]{\bigcup}{\cdot}}
\title{Relatively projective pro-$p$ groups}
\author{ Dan Haran and Pavel A. Zalesskii}
\date{\today}
\address{Dan Haran\\
Sackler School of Mathematical Sciences\\
Tel Aviv University\\
Tel Aviv\\
Israel}
\email{haran@tauex.tau.ac.il}
\address{P. A. Zalesski\u i\\
Department of Mathematics\\
University of Brasilia\\
70.910 Brasilia DF\\
Brazil}
\email{pz@mat.unb.br}
\begin{document}

\maketitle

\noindent MSC classification: 20E18

\noindent Key-words: pro-$p$ groups, HNN-extension.

\section{Introduction}

It is well-known that
the Schreier theorem does not hold for free profinite groups,
i.e.,
a subgroup of a free profinite group need not be free.
Subgroups of free profinite groups are groups of cohomological dimension 1;
they are called projective,
as they satisfy the same universal property as projective modules.

Similarly, the Kurosh Subgroup Theorem
does not hold for free products of profinite groups.
This led the first author to introduce the notion of
a profinite group projective relative to a family of its subgroups,
closed under conjugation.
Then \cite[Corollary 5.4]{Haran} has shown
that a subgroup $G$ of a free profinite product
$H = \coprod_{x\in X} H_x$
is projective relative to the family of subgroups
$\G = \{H_x^h\cap G \suchthat h \in H, x\in X\}$. 
An analogous result holds for subgroups of free pro-$p$ products.

However,
the converse, that is,
whether a profinite group $G$,
projective relative to a continuous family
$\G$ of its subgroups,
is a subgroup of a free product 
in the above manner,
has been treated only partially.

Assuming that $\G$ is a continuous family closed under conjugation,
we have
$\G=\{G_t \suchthat t\in T\}$,
where $T$ is a profinite space
on which $G$ continuously acts
so that the map $t \mapsto G_t$
is $G$-equivariant
and injective on $\{t \in T \suchthat G_t \ne 1\}$
(\cite[Lemma 3.5]{Haran}).
Then the answer to the above question is positive,
if $T$ has a closed subset $T_0$ of representatives of the $G$ orbits
(\cite[Theorem 9.5]{Haran}).
In particular, this is the case if $G$ is second countable
(\cite[Theorem 8.5]{Haran}).
Moreover, if $G$ is a second countable pro-$p$ group,
then $G$ is even a free pro-$p$ product
$G = \coprod_{t\in T_0} G_t\amalg F$,
where $F$ is a free pro-$p$ group
(\cite[Corollary 9.6]{Haran}).

On the other hand, 
the second author has produced
an example of a pro-$p$ group $G$,
projective relative to a family
$\G=\{G_t \suchthat t\in T\}$,
where the action of $G$ on $T$ has
no closed subset of representatives of the $G$-orbits.
But $G$ is, nevertheless,
a subgroup of a free pro-$p$ product
in the above manner
(\cite[Proposition 4.6]{jalg}).

The objective of this paper is to give a positive answer
to the above question for pro-$p$ groups.

First we note
(Lemma \ref{sub free prod})
that if $G$ is a subgroup of a free profinite (or pro-$p$) product
$H = \coprod_{x\in X} H_x$,
then,
denoting
$\G = \{H_x^h\cap G \suchthat h\in H, x\in X\}$,
\begin{itemize}
\item[(a)]
there is a profinite space $T$
on which $G$ acts so that
the family of the stabilizers $\{G_t \suchthat t \in T\}$
of this action satisfies
\begin{itemize}
\item[(a1)]
the map $t \mapsto G_t$
is injective on $T' = \{t \in T \suchthat G_t \ne 1\}$;
\item[(a2)]
$\G \smallsetminus \{1\} = \{G_t \suchthat t \in T'\}$;
\end{itemize}
\end{itemize}
and, as remarked,
\begin{itemize}[resume]
\item[(b)]
$G$ is projective relative to $\G$.
\end{itemize}
Conversely:

\begin{thm}\label{smain}
Let $G$ be a pro-$p$ group $G$
and $\G \subseteq \Subgr(G)$.
Assume (a), (b).
Then there exists an embedding
$\zeta \colon G\to \Gbar = L \amalg F$
into a free pro-$p$ product of a copy $L$ of $G$
and a free pro-$p$ group $F$
such that
$\{\zeta(G_t) \suchthat t \in T'\}
=
\{L^\sigma\cap \zeta(G) \suchthat \sigma\in \Gbar\} \smallsetminus \{1\}$.
\end{thm}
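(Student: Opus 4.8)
The plan is to obtain $\zeta$ as the solution of a $\G$-embedding problem and to feed that problem to the relative projectivity hypothesis (b). Fix an isomorphism $\lambda\colon G\to L$ onto an abstract copy $L$ of $G$ and a free pro-$p$ group $F$ of sufficiently large rank, and set $\Gbar=L\amalg F$. The first task is to build, inside $\Gbar$, a continuous family $\caF=\{H_t\mid t\in T\}$ of subgroups with each $H_t$ a conjugate $\lambda(G_t)^{\sigma_t}$ of $\lambda(G_t)$, together with an epimorphism $\mu\colon\Gbar\to G$ restricting to $\lambda^{-1}$ on $L$, arranged so that $(\mu,\caF)\to(G,\G)$ is a morphism of pairs carrying the $T$-indexed family $\{H_t\}$ onto $\{G_t\mid t\in T'\}$ compatibly with the $G$-action on $T$ and with (a1), (a2). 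The free factor $F$ enters precisely here: it supplies the conjugating elements $\sigma_t$, depending continuously and equivariantly on $t$, needed to realise all the subgroups $\lambda(G_t)$ as members of a single continuous family over $T$ even though $T\to T/G$ carries no closed section. This is the step that goes beyond \cite[Theorem 9.5]{Haran} and \cite[Proposition 4.6]{jalg}, and I expect it to require an auxiliary $\ourHNN$-type construction, together with its Hausdorff quotient $\pHNN$, to encode the action of $G$ on $T$ before one recognises the outcome as a free pro-$p$ product of the stated form.

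Granting such $\Gbar$, $\mu$ and $\caF$, the embedding problem ``lift $\id_G$ through $\mu$'' is locally solvable over $\G$: for $t\in T'$ the composite $G_t\xrightarrow{\lambda}\lambda(G_t)\to H_t\hookrightarrow\Gbar$, with the middle arrow conjugation by $\sigma_t$, lifts the inclusion $G_t\hookrightarrow G$ (since $\mu$ sends it back to that inclusion) and has image in $\caF$. By hypothesis (b) there is therefore a solution $\zeta\colon G\to\Gbar$ with $\mu\zeta=\id_G$, so $\zeta$ is an embedding, and $\zeta$ is compatible with the indexed families, i.e. $\zeta(G_t)\leq H_t=\lambda(G_t)^{\sigma_t}\leq L^{\sigma_t}$ for each $t\in T'$. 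It is essential that (b) yields a family-compatible solution rather than merely some section of $\mu$: the ``constant'' section $\lambda\colon G\to L\subseteq\Gbar$ is also a solution of the bare embedding problem, yet its image meets conjugates of $L$ in far too large subgroups, so the entire purpose of building $\caF$ as a genuinely $T$-parametrised family is to rule such solutions out.

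It remains to upgrade the containments $\zeta(G_t)\leq L^{\sigma_t}\cap\zeta(G)$ to the asserted equality of families. For this I would apply the pro-$p$ Kurosh subgroup theorem to $\zeta(G)\leq L\amalg F$: it presents $\zeta(G)$ as a free pro-$p$ product of a free pro-$p$ group and of the nontrivial intersections $\zeta(G)\cap L^\sigma$ taken over a continuous transversal of the relevant double cosets, and it identifies these intersections as exactly the nontrivial members of $\{L^\sigma\cap\zeta(G)\mid\sigma\in\Gbar\}$. One then matches the Kurosh factors with the $\zeta(G_t)$: each factor contains, and must equal, some $\zeta(G_t)$ because $\caF$ was constructed to mirror the $G$-action on $T$, while (a1) forces distinct orbits to contribute distinct, non-conjugate factors and forbids any spurious factor. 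Transporting the resulting equality of families back through $\zeta$ gives the statement; that $\Gbar=L\amalg F$ has the required shape and that (a), (b) are themselves necessary is recorded in Lemma \ref{sub free prod}.

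The principal obstacle is the construction of the first paragraph — a pro-$p$ group $\Gbar=L\amalg F$ carrying a continuous family that simultaneously maps onto $\G$ under $\mu$ and absorbs the missing transversal of $T\to T/G$ — since that is exactly where the difficulty which confined the earlier results to the second countable case must be overcome, and I expect the $\ourHNN$/$\pHNN$ machinery to be the right tool. A secondary, more technical, difficulty is the Kurosh bookkeeping needed to pass from containment to equality and to exclude extra free-product factors, which leans on (a1) and on the built-in family-compatibility of the solution furnished by relative projectivity.
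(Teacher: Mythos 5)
The decisive gap is in your first paragraph: it is not a construction but a wish list, and it is exactly where the whole difficulty of the theorem sits. Producing $\Gbar=L\amalg F$ together with a continuous, $G$-equivariant choice of conjugators $\sigma_t$ (equivalently, a continuous family $\caF=\{\lambda(G_t)^{\sigma_t} \suchthat t\in T\}$ mapping onto $\G$ under a retraction $\mu$) is essentially the statement being proved: in the paper the stable letters of the pile HNN-extension $\Gbar=\pHNN(G,T,\rho,L)$ are precisely these conjugators (the defining relations force $G_t^t=\rho(G_t)\le L$), and the substance is Theorem~\ref{proper} --- injectivity of the canonical map $\zeta_G\colon G\to\Gbar$ and the identification $\Gbar=L\amalg E$ with $E$ free pro-$p$ --- whose proof occupies most of the paper (reduction to finite $L$, Theorem~\ref{permext}, the centralizer computations of Lemmas~\ref{centralizer} and~\ref{special properties}, Herfort--Ribes \cite{HR 85}, and the limit Lemma~\ref{free product}). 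None of this is obtained by feeding one embedding problem to hypothesis (b); in the paper relative projectivity enters through the projective-pile formalism inside that proof (Proposition~\ref{pile vs group projectivity}, Lemma~\ref{mod tilde}, Corollary~\ref{projectivity mod tilde}, Lemma~\ref{HNN EP}), not as a lift of $\id_G$ through a retraction $\Gbar\to G$. Moreover (b) is a statement about \emph{finite} embedding problems, so even granting your construction, solving the infinite problem over $\mu$ needs a separate limit argument in the spirit of Proposition~\ref{general EP}.

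The step that actually fails is in your third paragraph: you invoke a ``pro-$p$ Kurosh subgroup theorem'' for the closed subgroup $\zeta(G)\le L\amalg F$ to write it as a free pro-$p$ product of the nontrivial intersections $\zeta(G)\cap L^\sigma$. No such theorem holds for arbitrary closed subgroups --- its failure is the motivation of this paper; it is available for open subgroups and, by \cite[Theorem 9.5]{Haran}, when a closed set of orbit representatives exists (e.g.\ in the second countable case), which is exactly what you cannot assume here. The paper establishes the equality \eqref{restriction to subgroup} without Kurosh: the relations give $G_t\le L^{t^{-1}}\cap G$, then for finite $L$ one uses \cite[Lemma 3.1.10]{HJ} and conjugation of finite subgroups into the members of $\G$, and finally one passes to the inverse limit over the finite quotients $L_U$, with injectivity of $t\mapsto G_t$ on $T'$ (Proposition~\ref{pile vs group projectivity}(b)) making the limit argument work. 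Without replacing your Kurosh step by an argument of this kind, the upgrade from $\zeta(G_t)\le L^{\sigma_t}\cap\zeta(G)$ to the asserted equality of families is unsupported.
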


Theorem \ref{smain} shows that
relatively projective pro-$p$ groups can be characterized as
subgroups of free pro-$p$ products.
However, the factors of the free product in Theorem \ref{smain}
are as complicated as the original group.
Our methods however show that one can find a simpler free product,
provided the map $t \mapsto G_t$ does not vary too much,
in a sense:

\begin{thm}\label{main small}
Under the assumptions of Theorem \ref{smain}
let $\rho \colon G\to L$ be a homomorphism into a pro-$p$ group
such that $\rho_{|G_t}$ is a monomorphism for each $t\in T$.
Then there exists an embedding
$\zeta \colon G\to \Gbar = L \amalg F$
into a free pro-$p$ product of $L$
and a free pro-$p$ group $F$
such that
$\{\zeta(G_t) \suchthat t \in T'\}
=
\{L^\sigma\cap \zeta(G) \suchthat \sigma\in \Gbar\} \smallsetminus \{1\}$.
\end{thm}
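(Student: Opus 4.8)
My plan is to build the embedding $\zeta$ by solving a single suitably chosen $\G$-embedding problem, using the relative projectivity~(b); Theorem~\ref{smain} is then the special case in which $\rho$ is an isomorphism, and all the genuinely new work is concentrated in showing that the resulting homomorphism is injective. (A direct reduction to Theorem~\ref{smain} does not seem to be available, because $\rho$ may destroy information that then has to be re-encoded in the free part $F$ of the target.) The key preliminary observation is that $N:=\overline{\langle G_t\suchthat t\in T\rangle}$ is a closed normal subgroup of $G$ (it is generated by a conjugation-invariant set, since $G_{gt}=G_t^{g^{-1}}$) and that $G/N$ is a \emph{free} pro-$p$ group: every pro-$p$ embedding problem for $G/N$ inflates to one for $G$ whose restriction to each $G_t$ is the trivial homomorphism, hence is $\G$-locally solvable by the trivial local homomorphisms, and~(b) then produces a solution whose restriction to every $G_t$ is (conjugate to, hence equal to) the trivial map; such a solution factors through $G/N$.

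For the construction, let $\Phi$ be a free pro-$p$ group of sufficiently large rank (e.g.\ at least the weight of $T$), put $F:=(G/N)\amalg\Phi$ and $\Gbar:=L\amalg F$ (so $F$ is free pro-$p$), and let $\gamma\colon\Gbar\to L\times(G/N)$ be the epimorphism which is $\xi\mapsto(\xi,1)$ on $L$, the inclusion into the second factor on $G/N$, and trivial on $\Phi$. Consider the $\G$-embedding problem given by $\gamma$ and by $\varphi:=(\rho,q)\colon G\to L\times(G/N)$, where $q\colon G\to G/N$ is the quotient map. For each $t$, $\varphi|_{G_t}=(\rho|_{G_t},1)$ lifts through $\gamma$ to the composite $G_t\xrightarrow{\rho|_{G_t}}L\hookrightarrow\Gbar$, and to any conjugate of it by an element $\psi_t\in\Phi$; I would pick $t\mapsto\psi_t$ continuously so that $t\mapsto L^{\psi_t}$ is injective on $T'$ modulo the $G$-action, which is possible since $T$ is profinite and $\Phi$ is free pro-$p$ of large rank. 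Relative projectivity~(b) then yields $\zeta\colon G\to\Gbar$ with $\gamma\zeta=\varphi$ and with $\zeta|_{G_t}$ conjugate to $(\rho|_{G_t})^{\psi_t}$ for all $t$.

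From $\gamma\zeta=\varphi$ one reads off that $\pi_L\zeta=\rho$ (with $\pi_L\colon\Gbar\to L$ killing $F$) and that the kernel of $\pi_F\zeta$ (with $\pi_F\colon\Gbar\to F$ killing $L$) is contained in $N$. Since each $\zeta|_{G_t}$ is conjugate to $(\rho|_{G_t})^{\psi_t}$, it is injective and carries $G_t$ into a conjugate $L^{\sigma(t)}$ of $L$; hence $\zeta(N)$ lies in the normal closure $M=\coprod_{f\in F}L^f$ of $L$ in $\Gbar$, so $\pi_F\zeta$ kills $N$ and its kernel is exactly $N$. Consequently $\ker\zeta\subseteq N$ while $\ker\zeta\cap G_t=1$ for every $t$. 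The main obstacle is to upgrade this to $\ker\zeta=1$, i.e.\ to prove $\zeta|_N\colon N\to M$ injective: here one must exploit that $N=\overline{\langle G_t\rangle}$, that each $\zeta|_{G_t}$ is injective, and that the $\psi_t$ send distinct $G_t$ into distinct conjugates of $L$, so that $\zeta|_N$ does not collapse the internal free-product structure of $M$ — concretely, I would realize $\zeta$ as an inverse limit of solutions of finite sub-embedding-problems, arranging at each finite stage enough freedom to eliminate any prospective nontrivial element of $\ker\zeta$. In Theorem~\ref{smain} this step is vacuous, since there the corresponding $\zeta$ lifts an \emph{injective} map along $\pi_L$ and is injective automatically; this is precisely the extra content of Theorem~\ref{main small}.

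It remains to match the two families. The inclusion $\{\zeta(G_t)\suchthat t\in T'\}\subseteq\{L^\sigma\cap\zeta(G)\suchthat\sigma\in\Gbar\}\smallsetminus\{1\}$ holds because $\zeta(G_t)\subseteq L^{\sigma(t)}$, and it is in fact an equality $\zeta(G_t)=L^{\sigma(t)}\cap\zeta(G)$ thanks to the separation of the $\psi_t$. For the reverse inclusion, apply Lemma~\ref{sub free prod} (equivalently, the pro-$p$ Kurosh subgroup theorem) to $\zeta(G)\le L\amalg F$: any nontrivial $L^\sigma\cap\zeta(G)$ is one of the vertex groups in the resulting decomposition of $\zeta(G)$, and the separation forces it to coincide with a single $\zeta(G_t)$, $t\in T'$. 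As indicated, the single genuinely delicate point in this plan is proving that $\zeta$ is injective.
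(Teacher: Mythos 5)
There is a genuine gap, and it sits exactly at the point you yourself flag as delicate. Two distinct problems. First, your separation device does not survive the appeal to relative projectivity: a solution of an embedding problem for the pair $(G,\G)$ is only required to satisfy $\gamma\circ\zeta=\varphi$ and $\zeta(\G')\subseteq\B'$, i.e.\ each $\zeta(G_t)$ lands in \emph{some} member of the target family; nothing in the definition forces $\zeta|_{G_t}$ to be conjugate to your prescribed local lift $(\rho|_{G_t})^{\psi_t}$, so the choice of separating elements $\psi_t\in\Phi$ gives no control over which conjugate of $L$ receives $\zeta(G_t)$. Everything downstream that uses this separation — the equality $\zeta(G_t)=L^{\sigma(t)}\cap\zeta(G)$, the injectivity of $t\mapsto L^{\sigma(t)}$ on $T'$ modulo $G$, and the Kurosh-based identification of the two families — is therefore unsupported. (A smaller issue of the same kind: your embedding problem has infinite target $\Gbar$, while $\G$-projectivity as defined concerns finite embedding problems; passing to infinite ones needs an argument, as in Proposition~\ref{general EP} for piles.)

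Second, the injectivity of $\zeta$ is not proved: from $\gamma\zeta=\varphi$ you correctly get $\ker\zeta\le N$ and $\ker\zeta\cap G_t=1$, but these do not imply $\ker\zeta=1$, and the sentence about ``arranging at each finite stage enough freedom to eliminate any prospective nontrivial element of $\ker\zeta$'' is not an argument — stage-wise solutions of finite embedding problems must also be chosen compatibly to pass to the limit, and killing a prospective kernel element at one stage can be undone at the next. This injectivity is precisely the hard content of the theorem: in the paper it is Theorem~\ref{proper}(ii), obtained not by solving one embedding problem but by embedding $G$ into the pile HNN-extension $\Gbar=\pHNN(G,T,\rho,L)$, proving properness by reduction (over finite quotients $L_U$ of $L$) to results on virtually free pro-$p$ groups and special HNN-extensions (\cite{permhnn}, \cite{HZ 07}, the Herfort--Ribes centralizer theorems), and simultaneously identifying $\Gbar=L\amalg E$ with $E$ free pro-$p$ (Theorem~\ref{proper}(iii)); the matching of the families \eqref{restriction to subgroup} is then a separate lemma using this free decomposition. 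Your observation that $G/\overline{\langle G_t\rangle}$ is free pro-$p$ is correct (it is Corollary~\ref{kill all stabilizers}), but by itself it does not bridge either gap.
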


Using Theorem \ref{main small} one can try to index the family of subgroups
$\G=\{G_t \suchthat t\in T\}$
of a relatively projective pro-$p$ group $G$
such that
$G$ embeds into the free pro-$p$ product of the new family
according to the following

\begin{thm}\label{into free product} 
Let $G$ be a pro-$p$ group projective relative to a continuous family
$\G=\{G_t \suchthat t\in T\}$
of its subgroups closed under conjugation.
Let $\rho \colon G\to L=\coprod_{x\in X} L_x $ be a homomorphism
into a free pro-$p$ product
such that
$\rho_{|G_t}$ is a monomorphism into some conjugate of $L_x$.
Then there exists a monomorphism $\zeta \colon G\to L\amalg F$
such that
$\zeta(G_t)$ is conjugate to a subgroup of some $L_x$.
\end{thm}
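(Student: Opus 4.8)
The plan is to reduce the statement to Theorem \ref{main small}. Since $\G$ is a continuous family of subgroups of $G$ closed under conjugation, by \cite[Lemma 3.5]{Haran} we may write $\G=\{G_t\suchthat t\in T\}$ with $T$ a profinite space carrying a continuous $G$-action for which $t\mapsto G_t$ is $G$-equivariant and injective on $T'=\{t\in T\suchthat G_t\neq1\}$; thus conditions (a1) and (a2) of Theorem \ref{smain} hold, while condition (b) is the hypothesis that $G$ is projective relative to $\G$. Moreover $\rho_{|G_t}$ is a monomorphism for every $t\in T$: for $t\in T'$ this is the hypothesis (it is even a monomorphism into a conjugate of some $L_x$, hence into $L$), and for $t\notin T'$ it is vacuous. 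So $G$, $\G$ and $\rho\colon G\to L$ satisfy the hypotheses of Theorem \ref{main small}.

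Applying that theorem, we obtain a free pro-$p$ group $F$ and an embedding $\zeta\colon G\to\Gbar=L\amalg F$ with
\[
\{\zeta(G_t)\suchthat t\in T'\}=\{L^\sigma\cap\zeta(G)\suchthat\sigma\in\Gbar\}\smallsetminus\{1\}.
\]
Since $L=\coprod_{x\in X}L_x$, the group $\Gbar=L\amalg F=\coprod_{x\in X}L_x\amalg F$ is again a free pro-$p$ product, and $\zeta$ will be the desired monomorphism once we check that each $\zeta(G_t)$ lies in a conjugate of a single factor $L_x$. Fix $t\in T'$; the displayed equality provides $\sigma\in\Gbar$ with $\zeta(G_t)\leq L^\sigma$, so $\zeta(G_t)^{\sigma^{-1}}$ is a subgroup of $\coprod_x L_x$ --- but a priori not an elliptic one.

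To pin it down we use that the embedding furnished by Theorem \ref{main small} may be chosen compatibly with the prescribed $\rho$; concretely, so that $\pi\circ\zeta=\rho$, where $\pi\colon\Gbar=L\amalg F\to L$ is the canonical retraction. Granting this, $\pi$ restricts to an isomorphism of $L^\sigma$ onto $L^{\pi(\sigma)}$ carrying $L_x^\sigma$ to $L_x^{\pi(\sigma)}$; moreover $1\neq\rho(G_t)=\pi(\zeta(G_t))$ lies in $L\cap L^{\pi(\sigma)}$, and since distinct conjugates of the free factor $L$ meet trivially and $N_\Gbar(L)=L$ (the degenerate case $F=1$ being trivial), this forces $L^{\pi(\sigma)}=L$. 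Transporting the inclusion $\rho(G_t)\leq L_{x(t)}^{\ell(t)}$ (with $\ell(t)\in L$) back along the isomorphism then yields $\zeta(G_t)^{c^{-1}}\leq L_{x(t)}$ for a suitable $c\in\Gbar$ --- equivalently, $\zeta(G_t)$ is $\Gbar$-conjugate to $\rho(G_t)$, hence to a subgroup of $L_{x(t)}$. For $t\notin T'$, $\zeta(G_t)=1$ and there is nothing to prove. Since $\G=\{G_t\suchthat t\in T\}$, the theorem follows.

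The crux, and the step I expect to cost real effort, is the compatibility $\pi\circ\zeta=\rho$ --- equivalently, that $\zeta(G_t)$ is $\Gbar$-conjugate to $\rho(G_t)$. Theorem \ref{main small} as stated only describes the family $\{\zeta(G_t)\}$ as the nontrivial intersections $L^\sigma\cap\zeta(G)$, which merely places $\zeta(G_t)$ inside a conjugate of all of $L$, not of one factor $L_x$; one must therefore revisit the construction behind Theorem \ref{main small} and verify that its embedding can be taken to agree, up to conjugacy in $\Gbar$, with the given $\rho$ on each $G_t$. An intrinsic argument is not available here: it is the hypothesis on $\rho$, rather than any property of $\zeta(G_t)^{\sigma^{-1}}$ as an abstract pro-$p$ subgroup of $\coprod_x L_x$, that forces ellipticity.
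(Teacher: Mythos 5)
Your reduction to Theorem \ref{main small} and the subsequent conjugation bookkeeping are fine, and your diagnosis is exactly right: the only content of Theorem \ref{into free product} beyond Theorem \ref{main small} is the compatibility of the embedding with $\rho$, i.e.\ that $\zeta(G_t)$ is conjugate in $\Gbar$ to $\rho(G_t)$ and not merely contained in a conjugate of all of $L$. But that is precisely the step you ``grant'' and defer (``one must revisit the construction''), so as written the argument is incomplete exactly where the theorem's content lies. The paper's proof \emph{is} that observation: it invokes Theorem \ref{proper} and notes that in its proof $\zeta_G(G_t)$ is conjugate to $\rho(G_t)$ for every $t \in T$.

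The gap closes at once if you open the black box: the embedding produced for Theorem \ref{main small} is the HNN-map $\zeta_G \colon G \to \Gbar = \pHNN(G,T,\rho,L)$ of Definition \ref{pileHNN}. By construction the HNN-extension $\bar\rho \colon \Gbar \to L$ of $\rho$ is the identity on $L$ and satisfies $\bar\rho \circ \zeta_G = \rho$, which is exactly the retraction compatibility you wanted (and it renders your appeal to malnormality of $L$ superfluous: $\bar\rho(\sigma)$ lies in $L$ automatically, so $L^{\bar\rho(\sigma)} = L$ trivially). Even more directly, relation \eqref{relation of pile HNN} with $g \in G_t$ reads $\rho(g) = g^t$ in $\Gbar$, so the stable letter $t$ conjugates $\zeta_G(G_t)$ onto $\rho(G_t)$, which by hypothesis lies in a conjugate of some $L_x$; combined with Theorem \ref{proper}(ii),(iii) ($\zeta_G$ injective, $\Gbar = L \amalg E$ with $E$ free pro-$p$ by Corollary \ref{kill all stabilizers}), this already completes the proof, without any need for the intersection formula \eqref{restriction to subgroup}. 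So your route is the paper's route, but the decisive step cannot be extracted from the statement of Theorem \ref{main small} alone; it must be taken from the explicit HNN construction behind it, and your write-up stops short of doing so.
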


Using the notion of pile (Definition \ref{pile})
we have another characterization of
relative projective groups:

\begin{thm}\label{inverse limit of free products}
Let $G$ act on a profinite space $T$ 
and let $\G = \{G_t \suchthat t\in T\}$.
Then the following statements are equivalent:

\begin{enumerate}
\item[(i)]
$G$ is projective relatively to $\G$ and (a1), (a2) above hold;

\item[(ii)]
$(G,T)$ is a projective pile;

\item[(iii)]
there exists a profinite space $\That$
that also satisfies (a1), (a2)
and $(G,\That) = \varprojlim_i (G_i,T_i)$,
where
$G_i = (\coprod_{x \in X_i} (G_i)_x) \amalg F_i$,
is a second countable free pro-$p$ product
with
$X_i$ a closed set of representatives of the $G_i$-orbits in $T_i$
and $F_i$ a free pro-$p$ group;
in particular, if $i \le j$, then
the map $\pi_{ij}$ of the inverse system maps every $(G_j)_x$
into a conjugate of $(G_i)_{\pi_{ij}(x)}$ in $G_i$;

\item[(iv)]
$G$ acts on a pro-$p$ tree with $\G \smallsetminus \{1\}$
being a family of nontrivial stabilizers of  vertices
and with trivial edge stabilizers. 
\end{enumerate}
\end{thm}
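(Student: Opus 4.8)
The plan is to prove the cycle of implications $(i)\Rightarrow(ii)\Rightarrow(iii)\Rightarrow(iv)\Rightarrow(i)$. The equivalence $(i)\Leftrightarrow(ii)$ should amount to unwinding Definition~\ref{pile}: a pile on $G$ is a continuous action of $G$ on a profinite space together with the resulting stabilizer family, and a projective pile is, by definition, one whose stabilizer family is relatively projective and for which the faithfulness conditions (a1), (a2) hold; so once the two lists of hypotheses are matched up, there is nothing further to check.

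For $(ii)\Rightarrow(iii)$ --- the substantial implication --- the strategy is to approximate the pile $(G,T)$ by second countable piles and to invoke the second countable case of \cite{Haran}. One realizes $(G,T)$ as an inverse limit $\varprojlim_i(G_i,T_i)$ of second countable piles with $G\twoheadrightarrow G_i$ and $T\twoheadrightarrow T_i$, arranged so that each $(G_i,T_i)$ is again a \emph{projective} pile; this is not automatic for an arbitrary second countable quotient, and constructing a suitable (cofinal) approximating system is where Theorem~\ref{smain}, Theorem~\ref{main small} and the HNN-extension constructions of this paper come into play. For a second countable projective pile, \cite[Theorem~8.5]{Haran} provides a closed transversal $X_i$ for the $G_i$-orbits in $T_i$, and \cite[Corollary~9.6]{Haran} gives $G_i=\bigl(\coprod_{x\in X_i}(G_i)_x\bigr)\amalg F_i$ with $F_i$ free pro-$p$. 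Putting $\That=\varprojlim_i T_i$ one obtains $(G,\That)=\varprojlim_i(G_i,T_i)$; conditions (a1), (a2) pass to the limit, since the extra orbits introduced by the free factors $F_i$ carry trivial stabilizers and therefore do not spoil the injectivity of the stabilizer map; and equivariance of the bonding maps forces $\pi_{ij}$ to send each $(G_j)_x$ into a conjugate of $(G_i)_{\pi_{ij}(x)}$. I expect this step --- specifically, producing the cofinal system of second countable projective piles of the required free-product shape --- to be the main obstacle.

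For $(iii)\Rightarrow(iv)$, attach to each free pro-$p$ product $G_i=\bigl(\coprod_{x\in X_i}(G_i)_x\bigr)\amalg F_i$ its standard pro-$p$ tree $S_i$ --- the pro-$p$ analogue of the Bass--Serre tree of a free product --- on which $G_i$ acts with all edge stabilizers trivial and with vertex stabilizers the conjugates of the $(G_i)_x$ (together with some vertices of trivial stabilizer). The compatibility of the bonding maps is exactly what makes $(S_i)_i$ an inverse system of pro-$p$ trees, so $S=\varprojlim_i S_i$ is again a pro-$p$ tree, $G=\varprojlim_i G_i$ acts on it with trivial edge stabilizers, and, using $\That=\varprojlim_i T_i$ together with (a1), (a2), one checks that the nontrivial vertex stabilizers of the $G$-action are exactly the members of $\G\smallsetminus\{1\}$.

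Finally, for $(iv)\Rightarrow(i)$, suppose $G$ acts on a pro-$p$ tree $D$ with trivial edge stabilizers and with $\G\smallsetminus\{1\}$ the family of nontrivial vertex stabilizers. In the short exact sequence of $\FpG$-modules attached to $D$, triviality of the edge stabilizers makes the edge term a free module, and reading off this sequence shows that $G$ is projective relative to the family of vertex stabilizers, i.e.\ relative to $\G$. Taking for $T$ the vertex set of $D$, condition (a2) is immediate, and (a1) holds because a nontrivial closed subgroup fixing two vertices of a pro-$p$ tree would also fix every edge on the geodesic joining them, contradicting triviality of the edge stabilizers; thus the stabilizer map is injective over $T'$. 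This closes the cycle.
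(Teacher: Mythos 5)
Your reduction of the theorem to the implication (ii)$\Rightarrow$(iii) is where the proposal breaks down, and you flag it yourself: you never construct the cofinal system of second countable \emph{projective} piles, and the natural candidates (quotient piles of $(G,T)$) do not work, since projectivity of a pile does not pass to an arbitrary second countable quotient, so \cite[Theorem 8.5, Corollary 9.6]{Haran} cannot be applied level by level. The missing idea, which is how the paper actually argues, is to approximate the \emph{ambient free product} rather than the pile: first use Theorem \ref{smain} (already available at this point, since it rests on Theorem \ref{main small} and Theorem \ref{proper}) to embed $G$ into $\Gbar=L\amalg F$ with $L$ a copy of $G$; then replace $T$ by $\That=\{L^\sigma \suchthat \sigma\in\Gbar\}$ with the conjugation action --- the statement of (iii) explicitly allows a new space $\That$ --- write $L=\varprojlim L_i$, $F=\varprojlim F_i$ with $L_i$ finite and $F_i$ finitely generated free, set $\Gbar_i=L_i\amalg F_i$ and $T_i=\{L_i^\sigma \suchthat \sigma\in\Gbar_i\}$, and let $G_i$ be the image of $G$ in $\Gbar_i$. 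Each $G_i$ is then a countably generated subgroup of a free pro-$p$ product, so the pro-$p$ Kurosh subgroup theorem \cite[Theorem 9.7]{Haran} gives the decomposition $G_i=(\coprod_{x\in X_i}(G_i)_x)\amalg F_i'$ with $X_i$ a closed transversal of the $G_i$-orbits in $T_i$; no projectivity of the approximating piles is needed at all. Without this (or an equivalent) construction the central implication is not proved.

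Two further points where your route diverges from the paper and is thinner than you suggest. For (iii)$\Rightarrow$(iv) you want to take an inverse limit of the standard trees $S_i$, but the Kurosh decompositions at different levels need not be compatible: $\pi_{ij}$ only sends $(G_j)_x$ into \emph{some} conjugate of $(G_i)_{\pi_{ij}(x)}$, and producing continuous, compatible choices of conjugators (hence graph morphisms $S_j\to S_i$) is not automatic. The paper avoids this by proving (i)$\Rightarrow$(iv) directly: $\Gbar=L\amalg F$ is a special HNN-extension $\HNN(L,X,\caL)$, its standard pro-$p$ tree has trivial edge stabilizers and vertex stabilizers the conjugates of $L$, and one restricts the action to $\zeta(G)$. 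Finally, your (iv)$\Rightarrow$(i) is a one-line cohomological sketch; translating the module sequence of the tree into solvability of the embedding problems of the pair $(G,\G)$ (which is how projectivity is defined here) is not established in this paper. The paper instead passes to the quotients $G_U=G/\tU$ with $\tU=\langle U\cap G_v\rangle$, shows they are virtually free pro-$p$ acting on $\Gamma/\tU$ with torsion and centralizers inside vertex stabilizers, embeds $G_U$ into $G/U\amalg F$ by \cite[Corollary 6.3]{permhnn}, applies Lemma \ref{sub free prod}(d), and concludes by an inverse limit argument; some such mechanism is needed to make your last step rigorous.
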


To illustrate Theorem \ref{main small} consider the following

\begin{example} 
Let $G$ be a pro-$p$ group and let $\G \subseteq \Subgr(G)$.
Assume (a), (b) above
and assume that the point stabilizers $G_t$ are abelian.
By Theorem \ref{inverse limit of free products},
$G$ is an inverse limit of free pro-$p$ products $G_i$ of abelian groups.
Each $G_i$ is an inverse limit of
free pro-$p$ products $G_{ij}$ of finitely many abelian factors.
As $[G_{ij},G_{ij}]$ intersects factors trivially
(say, because the projection on a factor is injective on that factor
and maps the commutator into $1$)
one deduces that $[G,G]\cap G_t=1$ for each $t\in T$.
Let $L = G/[G,G]$ be the abelianization of $G$. 
Then the quotient map
$\rho \colon G \to L$ 
is injective on $G_t$, for every $t \in T$.
Thus by Theorem \ref{main small} there exists an embedding 
$\zeta \colon G\to L\amalg F$ such that 
$\zeta(G_t)$ is conjugate to 
a subgroup of $L$ for every $t\in T$.

\end{example}

\bigskip

Let $\C$ be a family of finite groups 
closed under quotients, subgroups, and extensions.
In most of our application $\C$ will be the family of $p$-groups,
for a fixed prime $p$, but sometimes a more general treatment
seems to be more appropriate.

Given a subset $X$ of a profinite group $H$,
we denote by
$\langle X \rangle^H$
the smallest closed normal subgroup of $H$ containing $X$.

\medskip
\noindent{\bf Acknowledgements.} This paper was written when the second author visited Department of Pure Mathematics and Mathematical Statistics of University of Cambridge; the second author thanks Henry Wilton and the department for the hospitality.

\section{Profinite spaces and action}

\begin{rem}\label{partition definition}
Let $T$ be a profinite space.
A \textbf{partition} of $T$
is a finite family $X = \{T_i\}_{i=1}^n$ of nonempty clopen subsets of $T$
such that $T = \bigdotcup_{i=1}^n T_i$ (a disjoint union).
It induces a continuous map $\varphi \colon T \to X$,
by mapping every $t \in T_i$ onto $T_i \in X$.
Conversely, a continuous map
$\varphi \colon T \to X$
into a finite discrete space $X$
defines a partition, namely,
$\{\varphi^{-1}(\{x\}) \suchthat x \in \varphi(T) \subseteq X\}$.

A partition 
$Y = \{U_j\}_{j=1}^m$ is
\textbf{finer} than
(or a \textbf{refinement} of) $X$
if for every $j$ there is $i$ such that $U_j \subseteq T_i$.
\end{rem}

\begin{rem}\label{blocks}
Let $G$ be a profinite group continuously acting on 
a profinite space $T$.
If $Z$ is a closed subset of $T$, then
$$
\Stab_G(Z) = \{g \in G \suchthat Z^g = Z\}
$$
is a closed subgroup of $G$.
In particular,
$\Stab_G(t) = \{g \in G \suchthat t^g = t\}$ is a closed subgroup of $G$,
for every $t \in T$.
If $Z$ is clopen, then
$\Stab_G(Z)$ is open.

A partition $X = \{T_i\}_{i=1}^n$ of $T$
is a \textbf{$G$-partition}
if for every $i$ and every $g \in G$
either $T_i^g = T$ or $T_i \cap T_i^g = \emptyset$.
It defines an obvious $G$-action on $X$.
A partition $X$ of $T$ is a $G$-partition 
if and only if the induced map $\varphi \colon T \to X$ is $G$-equivariant.

An element of a $G$-partition is called a \textbf{$G$-block}.
\end{rem}

\begin{lem}\label{Stab}
Let $G$ be a profinite group continuously acting on 
a profinite space $T$.
Let $t \in T$ and let
$\B(t) = \{U \suchthat U \text{ is a $G$-block, } t \in U\}$.
\begin{itemize}
\item[(a)]
Every partition of $T$ can be refined by a $G$-partition.
\item[(b)]
$T$ is the inverse limit of its $G$-partitions.
\item[(c)]
$\B(t)$ is a basis of neighborhoods of $t$.
\item[(d)]
$\{t\} = \bigcap_{U \in \B(t)} U$.
\item[(e)]
If $U \subseteq T$ is a $G$-block
and $Z \subseteq T$ is a non-empty closed subset,
then
$\Stab_G(Z) \le \Stab_G(U)$;
in particular,
$\Stab_G(t) \le \Stab_G(U)$ for every $t \in U$;

\item[(f)]
$\Stab_G(t) = \bigcap_{U \in \B(t)} \Stab_G(U)$.
\end{itemize}
\end{lem}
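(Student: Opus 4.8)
The plan is to prove (a) first, since the other items follow from it more or less formally. Given a partition $X=\{T_i\}$ of $T$ with induced continuous map $\varphi\colon T\to X$, I would look at the continuous map $G\times T\to X\times X$, $(g,t)\mapsto(\varphi(t^g),\varphi(t))$. As $X\times X$ is finite discrete, the preimage of the diagonal is a clopen subset of $G\times T$ containing $\{1\}\times T$; since $T$ is compact, the tube lemma gives an open neighbourhood $V$ of $1$ in $G$ with $V\times T$ inside it, and then an open normal subgroup $N$ of $G$ with $N\subseteq V$, so that $\varphi(t^n)=\varphi(t)$ for all $t\in T$, $n\in N$. Then I would define $\psi\colon T\to\prod_{gN\in G/N}X$ by $\psi(t)_{gN}=\varphi(t^g)$: this is well defined because $\varphi$ is $N$-invariant, continuous because it has finitely many coordinates, each a homeomorphism followed by $\varphi$, and $G$-equivariant for the action of $G$ on the finite set $\prod_{G/N}X$ through its action on $G/N$. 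Consequently the nonempty fibres of $\psi$ form a $G$-partition of $T$, and since the coordinate indexed by $N$ recovers $\varphi$, this $G$-partition refines $X$.

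For (b) I would use the standard realization of a profinite (Boolean) space as the inverse limit of the directed system of all its partitions, note that the common refinement of two $G$-partitions is again a $G$-partition (so the $G$-partitions form a directed subsystem), and invoke (a) to see that they are cofinal; hence $T$ is already the inverse limit of its $G$-partitions.

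For (c) and (d): a given neighbourhood of $t$ contains a clopen neighbourhood $W$ (as $T$ is profinite); applying (a) to $\{W,T\smallsetminus W\}$ (the cases $W=\emptyset,T$ being trivial) yields a $G$-partition refining it, and the block of that partition that contains $t$ lies inside $W$ and belongs to $\B(t)$ — so $\B(t)$ is a neighbourhood basis. Part (d) then drops out: for $s\neq t$, Hausdorffness gives a clopen $W\ni t$ with $s\notin W$, and by (c) some $U\in\B(t)$ with $U\subseteq W$, whence $s\notin\bigcap_{U\in\B(t)}U$.

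For (e), reading $Z$ as a nonempty closed subset of $U$ (which is the situation actually used, as the accompanying ``in particular'' with $t\in U$ shows): if $g\in\Stab_G(Z)$ then $\emptyset\neq Z=Z^g\subseteq U^g$ while also $Z\subseteq U$, so $U\cap U^g\neq\emptyset$, forcing $U^g=U$ since $U$ is a $G$-block, i.e.\ $g\in\Stab_G(U)$. Finally for (f): ``$\le$'' is (e) with $Z=\{t\}$ applied to each $U\in\B(t)$, and for ``$\ge$'', if $g$ stabilizes every $U\in\B(t)$ then $t^g\in U^g=U$ for all such $U$, so $t^g\in\bigcap_{U\in\B(t)}U=\{t\}$ by (d), hence $t^g=t$. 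I expect the only genuinely non-formal point to be the compactness step in (a) that produces the open normal subgroup $N$ with $\varphi$ constant on $N$-orbits; from there on everything is bookkeeping with fibres and equivariance.
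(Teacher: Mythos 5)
Your proof is correct, and for parts (c)--(f) it coincides with the paper's argument: (c) from (a) via a clopen neighbourhood, (d) from (c) via Hausdorffness, (e) exactly the computation $\emptyset\ne Z=Z^g\subseteq U\cap U^g$, and (f) combining (e) and (d). You also correctly supplied the hypothesis $Z\subseteq U$ that the statement of (e) omits but that the paper's own proof (and the ``in particular'' clause) silently uses. The only real divergence is in (a) and (b): the paper disposes of both by citing \cite[Lemma 7.1.1]{HJ}, whereas you give a self-contained proof --- compactness of $T$ plus the tube lemma to find an open normal $N\trianglelefteq G$ on whose orbits the given partition map $\varphi$ is constant, then the equivariant map $\psi\colon T\to X^{G/N}$, $\psi(t)_{gN}=\varphi(t^g)$, whose nonempty fibres form a $G$-partition refining $X$ (the coordinate at the coset $N$ recovering $\varphi$), followed by the standard cofinality argument for (b). This is essentially the content of the cited lemma, so you gain nothing logically new, but your write-up makes the lemma independent of \cite{HJ}; each step (well-definedness, continuity, equivariance, the directedness of $G$-partitions under common refinement) checks out.
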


\begin{proof}
(a) \cite[Lemma 7.1.1]{HJ}, Claim B.

(b) \cite[Lemma 7.1.1]{HJ}.

(c)
Follows from (a).

(d)
Follows from (c).

(e)
Let $g \in \Stab_G(Z)$.
Then $\emptyset \ne Z = Z^g \subseteq U \cap U^g$,
so $U \cap U^g \ne \emptyset$,
and hence $U = U^g$; thus $g \in \Stab_G(U)$.

(f)
By (e),
$\Stab_G(t) \le \bigcap_{U \in \B(t)} \Stab_G(U)$.
If $g \in \bigcap_{U \in \B(t)} \Stab_G(U)$,
then
$t^g \in \bigcap_{U \in \B(t)} U^g = \bigcap_{U \in \B(t)} U = \{t\}$,
by (d),
so $g \in \Stab_G(t)$.
\end{proof}

The next lemma is an analogue of \cite[Lemma 2.1.3]{HJ}.

\begin{lem}\label{G-partition}
Let a \textbf{finite} group $G$ continuously act on a profinite space $T$
and let $\mathcal{P}$ be a partition of $T$.
Then there is a $G$-partition
$T = \bigdotcup_{i=1}^n T_i$ of $T$, 
finer than $\mathcal{P}$,
such that for every $i$ there is $t_i \in T_i$
with
$\Stab_G(T_i) = G_{t_i}$. 
\end{lem}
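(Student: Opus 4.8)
The plan is to induct on $|G|$; when $|G|=1$ every partition is a $G$-partition and the statement is trivial, so assume $|G|>1$ and the lemma for all proper subgroups of $G$. Throughout, the point to keep in mind is Lemma \ref{Stab}(e): if $U$ is a $G$-block and $t\in U$, then $G_t\le\Stab_G(U)$ automatically, so the task is only to find, inside each block, a point realizing equality. The engine of the argument is the following consequence of Lemma \ref{Stab}(f) together with the finiteness of $G$: for every $t\in T$ there is a $G$-block $V_t\ni t$ with $\Stab_G(V_t)=G_t$. Indeed, the $G$-blocks containing $t$ are directed downwards (given two of them, refine their intersection by a $G$-partition via Lemma \ref{Stab}(a) and take the block through $t$), so their stabilizers form a downward directed family in the finite subgroup lattice of $G$, hence attain a least element, which by Lemma \ref{Stab}(f) is $G_t$. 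Moreover, shrinking $V_t$ we may place it inside any prescribed clopen neighbourhood of $t$, by the same two lemmas.

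First I would pass, starting from a preliminary $G$-partition $\mathcal{Q}_0$ refining $\mathcal{P}$ (Lemma \ref{Stab}(a)), to a $G$-partition $\mathcal{Q}$ refining $\mathcal{P}$ in which every block either meets the fixed-point set $T^G=\{t\in T : G_t=G\}$ or has stabilizer strictly smaller than $G$. To do this, let $B$ be the union of those blocks of $\mathcal{Q}_0$ that are $G$-invariant and disjoint from $T^G$; it is clopen, $G$-invariant, and $B\cap T^G=\emptyset$, so each $b\in B$ has $G_b\lneq G$ and hence, by the previous paragraph, a $G$-block neighbourhood contained in $B$ with stabilizer $G_b\lneq G$. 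Finitely many of these cover $B$, and Lemma \ref{Stab}(a) then yields a $G$-partition of $B$ each of whose blocks lies in one of these neighbourhoods, hence has proper stabilizer by Lemma \ref{Stab}(e); replacing the part of $\mathcal{Q}_0$ inside $B$ by it gives $\mathcal{Q}$. Note that a block of $\mathcal{Q}$ meeting $T^G$ is necessarily $G$-invariant (Lemma \ref{Stab}(e)) and already good, taking $t_i$ to be a fixed point it contains.

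Next I would dispose of the blocks $U$ of $\mathcal{Q}$ with $H:=\Stab_G(U)\lneq G$ using the inductive hypothesis. Since $G_t\le H$ for every $t\in U$ (Lemma \ref{Stab}(e)), restriction identifies $\Stab_H$ with $\Stab_G$ and $H_t$ with $G_t$ on subsets of $U$, so the inductive hypothesis applied to the $H$-action on $U$ with partition $\{U\}$ produces an $H$-partition of $U$ all of whose blocks $W$ are good. Each such $W$ is in fact a $G$-block, since for $g\notin H$ one has $W^g\subseteq U^g$ disjoint from $U$; transporting the $H$-partition of $U$ along a transversal of the cosets of $H$ then spreads it to a $G$-partition of the orbit $\bigcup_{g\in G}U^g$ whose blocks are $G$-conjugates of the $W$'s, hence have stabilizers that are $G$-conjugates of point stabilizers and are therefore themselves point stabilizers. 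Since the blocks produced all lie inside $G$-translates of blocks of $\mathcal{Q}$, and $\mathcal{Q}$ is a $G$-partition refining $\mathcal{P}$, they still refine $\mathcal{P}$. Carrying this out once per $G$-orbit of proper-stabilizer blocks and leaving the good $G$-invariant blocks of $\mathcal{Q}$ untouched assembles the desired $G$-partition of $T$ refining $\mathcal{P}$.

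I expect the only genuine obstacle to be the construction of $\mathcal{Q}$: a $G$-invariant block need not contain any $G$-fixed point, so such ``bad'' blocks cannot simply be retained and must be broken up — and this is exactly what would fail for infinite $G$, being rescued here by Lemma \ref{Stab}(f) together with $|G|<\infty$, which guarantee that a point with $G_b\lneq G$ already sits inside a $G$-block whose stabilizer has strictly dropped, thereby feeding the induction on $|G|$. Everything else — checking that the various refinements really are $G$-partitions, that restriction to a $G$-invariant clopen set and spreading along a transversal behave as claimed, and the bookkeeping identifying each resulting $\Stab_G(W)$ with a point stabilizer — is routine.
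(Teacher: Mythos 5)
Your argument is correct (up to one easily repaired slip), but it follows a genuinely different route from the paper's. The paper inducts on the number of distinct point stabilizers: it fixes a maximal $\Gamma\in\{G_t \suchthat t\in T\}$, works inside the closed stratum $C(\Gamma)=\{t \suchthat G_t=\Gamma\}$ and its normalizer $N_G(\Gamma)$, spreads an $N_G(\Gamma)$-partition of $C(\Gamma)$ over $C=\bigcup_{g}C(\Gamma)^g$, then thickens these closed blocks to clopen subsets of $T$ on which all point stabilizers lie in $\Gamma$, arranges the thickening to be $G$-equivariant, and recurses on the complement, where $\Gamma$ no longer occurs as a stabilizer. You instead induct on $|G|$, and your engine is the observation that, since the $G$-blocks through a point $t$ are directed downwards under common refinement and $G$ is finite, Lemma \ref{Stab}(e),(f) already yield a block $V_t\ni t$ with $\Stab_G(V_t)=G_t$ exactly; this lets you keep the blocks meeting $T^G$, break every other $G$-invariant block into blocks with proper stabilizer $H\lneq G$, apply the inductive hypothesis to the $H$-action on such a block, and spread the result along a transversal of $H$ in $G$. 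Your route avoids the paper's delicate thickening of a closed stratum to a clopen set, at the cost of recursing through proper subgroups and the transport bookkeeping; the paper stays with the single group $G$ throughout. Both rest only on Lemma \ref{Stab} and compactness.

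The slip: in constructing $\mathcal{Q}$ you only require the blocks $V_b$ to lie inside $B$, so the blocks of the refining $G$-partition of $B$ need not sit inside single blocks of $\mathcal{Q}_0$, and then $\mathcal{Q}$ need not refine $\mathcal{P}$ when $B$ meets several $\mathcal{P}$-classes --- a property you use later. Your own remark that $V_b$ can be shrunk into any prescribed clopen neighbourhood of $b$ repairs this at once: prescribe the $\mathcal{Q}_0$-block containing $b$ (equivalently, include $\mathcal{Q}_0$ restricted to $B$ among the partitions refined via Lemma \ref{Stab}(a)). With that adjustment, the remaining bookkeeping --- that the transported sets are $G$-blocks, that their stabilizers are conjugates of point stabilizers and hence point stabilizers, and that the assembled family is a $G$-partition of $T$ refining $\mathcal{P}$ --- goes through as you describe.
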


\begin{proof}
%
By induction on the size of $\G = \{G_t \suchthat t \in T\}$.

For every maximal $\Gamma \in \G$ let
$C(\Gamma) = \{t \in T \suchthat G_t = \Gamma\}$.
This is a closed subset of $T$,
because it is the complement of the open subset
$\bigcup_{\Gamma' \in \G, \, \Gamma' \ne \Gamma}
\{t \in T \suchthat G_t \le \Gamma'\}$
of $T$.
If $g \in G$, then $\Gamma^g$ is also maximal in $\G$,
and 
$C(\Gamma)^g = C(\Gamma^g) = \{t \in T \suchthat G_t = \Gamma^g\}$.

The union
$C = \bigcup_{g \in G} C(\Gamma)^g$
is also closed; it is $G$-invariant.
Clearly,
$C = \bigdotcup_{g \in G/S} C(\Gamma)^g$,
where $S = N_G(\Gamma)$ is the stabilizer of $C(\Gamma)$ in $G$.
Thus this is a $G$-partition of $C$.
Deduce that if $C' \subseteq C(\Gamma)$,
then $\Stab_G(C') = \Stab_S(C')$.

By Lemma~\ref{Stab}(a)
we may assume that
$\mathcal{P}$ is a $G$-partition.

By Lemma~\ref{Stab}(d),(f),
every $t \in C(\Gamma)$
is contained in an $S$-block $C_t$ of $C(\Gamma)$
with $\Stab_S(C_t) = \Stab_S(t) = \Stab_G(t) = \Gamma$.
As $C(\Gamma)$ is compact,
finitely many of these $C_t$ cover $C(\Gamma)$.
Their intersections constitute a partition $\mathcal{P}'$ of $C(\Gamma)$.
By Lemma~\ref{Stab}(a),
$C(\Gamma)$ has an $S$-partition
$C(\Gamma) = \bigdotcup_{i=1}^m C_i$,
finer than $\mathcal{P}'$
and finer than the partition induced by $\mathcal{P}$ on $C(\Gamma)$.

Choose $t' \in C_i$ and $t \in C(\Gamma)$ such that $C_i \subseteq C_t$.
By Lemma~\ref{Stab}(e),
$\Gamma = \Stab_S(t') \le \Stab_S(C_i)$
and
$\Stab_S(C_i) \le \Stab_S(C_t) = \Gamma$.
Thus
$\Stab_S(C_i) = \Gamma = G_t$ for every $t \in C_i$.

Now
$C = \bigdotcup_{g \in G/S} \bigdotcup_{i} C_i^g$
is a $G$-partition of $C$
and 
$\Stab_G(C_i^g) = \Stab_{S^g}(C_i^g) = G_t$
for every $t \in C_i^g$.
We write this partition as
$\{C_i\}_{i=1}^n$.

There are disjoint open subsets $T_1,\ldots, T_n$ of $T$ 
such that $C_i \subseteq T_i$ for every $i$.
Let $1 \le i \le n$.
Every $t \in C_i$ has a clopen neighborhood in $T_i$
such that
$G_{t'} \le \Gamma$ 
for every $t'$ in that neighborhood.
As $C_i$ is compact,
we may replace $T_i$ by a union of finitely many such neighborhoods,
so that $T_i$ is clopen in $T$ and
$G_{t'} \le \Gamma$,
for every $t' \in T_i$.

If $1 \le i,k \le n$, and
$C_i^g = C_k$, for some $g \in G$, then,
without loss of generality,
$T_i^g = T_k$,
otherwise replace $T_i$ by its subset
$V_i = \bigcap_{(j,h)} T_j^{h^{-1}}$,
where $(j,h)$ runs through all pairs of 
$1 \le j \le n$ and $h \in G$
such that $C_i^h = C_j$.
Indeed, if $C_i^g = C_k$, then
$$
V_i^g =
\Big(
\bigcap_{\myatop{(j,h)}{C_i^h = C_j}}
    T_j^{h^{-1}} \Big)^g =
\bigcap_{\myatop{(j,h)}{C_k^{g^{-1}h} = C_j}}
    T_j^{(g^{-1} h)^{-1}} =
\bigcap_{\myatop{(j,\sigma)}{C_k^{\sigma} = C_k}}
    T_j^{\sigma^{-1}} =
V_k.
$$
Thus
$U = \bigdotcup_{i=1}^n T_i$
is
is a $G$-invariant clopen subset of $T$,
and $\{T_i\}_{i=1}^n$ is its $G$-partition,
such that
$\Stab_G(T_i) = \Stab_G(C_i)$ for every $i$.
Choose $t_i \in C_i \subseteq T_i$,
then 
$\Stab_G(T_i) = G_{t_i}$ for every $i$.

Also $T' = T \smallsetminus U$ is a clopen $G$-invariant subset.
As the set
$\{G_t \suchthat t \in T'\}$
does not contain $\Gamma$,
its size is strictly smaller than $|\G|$.
Therefore, by induction on $|\G|$,
there is a partition of $T'$, which, together with 
the partition $\{T_i\}_{i=1}^n$ of $U$,
gives the required $G$-partition of $T$.
\end{proof}

Let $F(T)$, resp. $F(T,*)$,
denote the free pro-$\C$ group on a
profinite space $T$,
resp. pointed profinite space $(T,*)$
(\cite[Section 3.3]{RZ}).

\begin{lem}\label{centralizer}
Let $G$ be a pro-$\C$ group acting on a profinite space $T$
and let $\tG=F(T)\rtimes G$ be the induced semidirect product.
Then $C_{F(T)}(G)=\langle T^G\rangle= F(T^G)$,
where $T^G$ is the subspace of points of $T$ fixed by $G$.
\end{lem}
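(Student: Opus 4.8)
The plan is to prove both inclusions $F(T^G) = \langle T^G\rangle \subseteq C_{F(T)}(G)$ and $C_{F(T)}(G) \subseteq \langle T^G\rangle$, the first being essentially immediate and the second requiring the real work. For the easy direction, note that $T^G$ is a closed subspace of $T$, so the closed subgroup $\langle T^G\rangle$ it generates inside $F(T)$ is the free pro-$\C$ group $F(T^G)$ (a closed subspace of a free generating space freely generates a closed subgroup, by \cite[Section 3.3]{RZ}); and since $G$ fixes every point of $T^G$ it centralizes every element of $\langle T^G\rangle$ inside $\tG = F(T)\rtimes G$. So $F(T^G) = \langle T^G\rangle \subseteq C_{F(T)}(G)$.

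For the reverse inclusion the idea is to reduce to the finite case via the inverse-limit description of the $G$-action on $T$. First I would treat the case where $G$ is finite. Using Lemma~\ref{G-partition}, choose a cofinal system of $G$-partitions $T = \bigdotcup_i T_i^{(\lambda)}$ of $T$; each gives a finite quotient $T \to X_\lambda$ with $F(T) = \varprojlim_\lambda F(X_\lambda)$ compatibly with the $G$-action, where $F(X_\lambda)$ is a free pro-$\C$ group of finite rank on the finite $G$-set $X_\lambda$. Since taking $G$-fixed points commutes with these inverse limits (as $G$ is finite and the transition maps are $G$-equivariant surjections of finite sets, hence $X_\lambda^G$ surjects onto the limit of the system and $T^G = \varprojlim_\lambda X_\lambda^G$), and since centralizers also behave well under inverse limits of the semidirect products $F(X_\lambda)\rtimes G$, it suffices to prove $C_{F(X)}(G) = \langle X^G\rangle$ when $X$ is a \emph{finite} $G$-set. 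This is a statement about finitely generated free pro-$\C$ groups with a finite group of operators, and it can be deduced from the corresponding fact for abstract (discrete) free groups on a finite $G$-set — namely that the centralizer of $G$ in $F_{\mathrm{abs}}(X)\rtimes G$ is the free group on $X^G$, which one proves using Bass--Serre theory (the $G$-action on the Cayley graph / associated tree has the free generators on $X\setminus X^G$ permuted nontrivially, so no element of $G$ can commute with them) — together with the fact that in this setting the pro-$\C$ centralizer is the closure of the abstract centralizer, or alternatively by a direct argument on normal forms / the associated pro-$\C$ tree on which $\tG$ acts.

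Once the finite case is in hand, the general case follows by writing $G = \varprojlim_\mu G/N_\mu$ over open normal subgroups $N_\mu$. The subtlety is that $F(T)$ is a module over $G$ rather than over the finite quotients, so one cannot directly descend; instead I would argue that $C_{F(T)}(G) = \bigcap_{g\in G} C_{F(T)}(g)$ and use continuity to reduce to finitely many $g$'s modulo a suitable $G$-partition, or — more cleanly — observe that an element $w \in F(T)$ fixed by $G$ is, in each finite quotient $F(X_\lambda)$, a $G$-fixed element, hence (by the finite case applied to the finite group $G/\Stab_G(X_\lambda)$ acting on $X_\lambda$, noting $\Stab_G(X_\lambda)$ acts trivially so the action factors through a finite quotient) lies in $\langle X_\lambda^G\rangle$; passing to the limit gives $w \in \langle T^G\rangle$. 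The main obstacle I expect is the bookkeeping in this last step — verifying that $T^G = \varprojlim_\lambda X_\lambda^G$ and that the groups $\langle X_\lambda^G\rangle$ form a compatible inverse system whose limit is $\langle T^G\rangle = F(T^G)$ — and, within the finite case, pinning down exactly why the centralizer of a \emph{group of operators} permuting part of a basis is generated by the fixed basis elements, which is where a tree action (each $g \in G$ acts on the standard pro-$\C$ tree for $\tG$, and an element centralizing $G$ must fix the relevant subtree) gives the most transparent argument.
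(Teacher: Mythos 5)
Your reductions are sound: the inclusion $F(T^G)=\langle T^G\rangle\subseteq C_{F(T)}(G)$ is indeed immediate, and passing to finite quotients ($G$-partitions of $T$, with the $G$-action on each finite quotient factoring through a finite quotient of $G$) does reduce the lemma to the case of a finite group acting on a finite set, essentially as the paper reduces to finite $T$. (A small slip: fixed points of $G$-equivariant surjections of finite $G$-sets need not surject onto fixed points, but you do not need surjectivity to get $T^G=\varprojlim X_\lambda^G$, so this is harmless.)

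The genuine gap is the core finite case. You propose to deduce $C_{F(X)}(G)=\langle X^G\rangle$ from the corresponding statement for the abstract group $F_{\mathrm{abs}}(X)\rtimes G$ ``together with the fact that the pro-$\C$ centralizer is the closure of the abstract centralizer.'' That ``fact'' is exactly the hard content of the lemma in this case, not a quotable standard statement: centralizers generally do not behave well under pro-$\C$ completion, and establishing that no new centralizing elements of $G$ appear in $F(X)\rtimes G$ requires precisely the machinery you defer to in passing (pro-$\C$ trees or the Herfort--Ribes theorems). The paper's proof supplies this input concretely: it reduces to procyclic $G$ via $C_{F(T)}(G)=\bigcap_\sigma C_{F(T)}(\langle\sigma\rangle)$ and \cite[Lemma 3.1.5]{HJ}, proves the stronger normalizer statement orbit by orbit using \cite[Proposition 2.8]{BPZ} and the Kurosh subgroup theorem to split $N_{\tG}(G)$ over the free product decomposition $\tG=\coprod_i(F(T_i)\rtimes G)$, identifies $F(T_i)\rtimes G\cong\langle t\rangle\amalg G$ for a faithful orbit via \cite[Lemma 4.7.4]{HJ}, and then invokes Herfort--Ribes \cite[Theorem A]{HR 85} to conclude $N_{\tG_i}(G)=G$. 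Your alternative suggestion (``normal forms / the associated pro-$\C$ tree'') points in the right direction, but as written it is only a gesture; until you either prove the completion-of-centralizer claim or carry out a tree/free-product argument of the above kind, the central step of the proof is missing.
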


\begin{proof} 
First assume that $T$ is finite.

As
$C_{F(T)}(G) = \bigcap_{\sigma \in G} C_{F(T)}(\langle \sigma \rangle)$
and
$\bigcap_{\sigma \in G} F(T^{\langle \sigma \rangle}) =
F(\bigcap_{\sigma \in G} T^{\langle \sigma \rangle})$,
by \cite[Lemma 3.1.5]{HJ},
it suffices to prove the assertion for $G$ procyclic.
Since the centralizer is contained in the normalizer,
it suffices to prove in this case a stronger assertion,
namely, that
$N_{F(T)}(G) = F(T^G)$;
equivalently,
$N_{\tG}(G) = F(T^G) \times G$.

Write $T$ as the disjoint union
$T = \bigdotcup_{i=1}^n T_i$ of its $G$-orbits.
Then
$F(T) = \coprod_{i=1}^n F(T_i)$,
hence
$\tG = \coprod_{G,i} \tG_i$,
where $\tG_i = F(T_i) \rtimes G$ for each $i$.
By \cite[Proposition 2.8]{BPZ}
$N_{\tG}(G) = \coprod_{G,i} N_{\tG_i}(G)$.
Therefore by the pro-$p$ version of the Kurosh Subgroup Theorem \cite[Theorem 9.1.10]{RZ}
$N_{F(T)}(G) = \coprod_{i} N_{F(T_i)}(G)$.
If $G$ acts trivially on $T_i$, then
$N_{F(T_i)}(G) = F(T_i)$.
Thus it suffices to show that if $G$ acts nontrivially on $T_i$, 
then $N_{F(T_i)}(G) = 1$;
equivalently, $N_{\tG_i}(G) = G$.

Replacing $G$ by its image in $\Sym(T_i)$
we may assume that $G$ acts faithfully on $T_i$.
In particular,
$G$ is a finite cyclic group and has trivial stabilizers.
Thus, for some $t \in T_i$, the map $\sigma \mapsto t^\sigma$
is a bijection $G \to T_i$.
By \cite[Lemma 4.7.4]{HJ}, 
$\tG_i =\langle t \rangle \amalg G$.
Hence, by a theorem of Herfort and Ribes
(\cite[Theorem A]{HR 85}),
$N_{\tG_i}(G) = G$.

In the general case $T$ is an inverse limit of finite $G$-spaces,
$T = \varprojlim T_k$.
Then
$T^G = \varprojlim T^G_k$,
and
$F(T) = \varprojlim F(T_k)$
and $\tG = \varprojlim (F(T_k)\rtimes G)$.
Hence
$C_{F(T)}(G) = \varprojlim C_{F(T_k)}(G) = \varprojlim F(T^G_k) = F(T^G)$.
\end{proof}

\begin{lem}\label{free group}
Let $F = F(T,*)$ be a free pro-$\C$ group
on a pointed profinite space $(T,*)$.
Then there is a profinite space $Y$ 
such that
$F$ is $F(Y)$, the free pro-$\C$ group on $Y$.
\end{lem}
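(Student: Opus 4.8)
The plan is to show that a free pro-$\C$ group on a pointed profinite space $(T,*)$ can be re-presented as a free pro-$\C$ group on an (unpointed) profinite space $Y$, essentially by ``absorbing'' the role of the basepoint into the space. The naive guess $Y = T \smallsetminus \{*\}$ fails because that space need not be compact; so instead I would look for a closed subspace $Y$ of $F$ that converges to $1$ and freely generates $F$. The natural candidate is obtained by a ``doubling'' or ``cone'' construction: take two disjoint copies $T_0, T_1$ of $T$, glue them along their basepoints to a single point which we send to $1 \in F$, and map the copy $T_0$ into $F$ via the canonical map $T \to F$ and the copy $T_1$ via $t \mapsto \iota(t)^{-1}$, where $\iota \colon (T,*) \to F$ is the canonical map. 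More precisely, I would form the pointed space $(T \dotcup_* T, *)$ (wedge along the basepoint) and the continuous map into $F$ just described; its image $Y$ is a quotient profinite space, and I claim $F = F(Y)$.

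The key steps, in order, would be: (1) Construct the pointed profinite space $W = T \vee T$ (two copies of $T$ identified at $*$) and the continuous pointed map $\theta \colon (W,*) \to (F, 1)$ sending the first copy by $\iota$ and the second by $t \mapsto \iota(t)^{-1}$. (2) Observe that $\theta$ factors through a profinite quotient space $Y$ of $W \smallsetminus \{*\}$ together with its closure point, i.e.\ $Y \subseteq F$ is the image $\theta(W)$, which is profinite, and $1 \in Y$; crucially every neighborhood of $1$ in $F$ pulls back to a neighborhood of $*$ in $W$, so $Y$ is a profinite space with a distinguished isolated-or-limit point $1$ that behaves like the constant section. (3) Verify the universal property: given any pro-$\C$ group $H$ and any continuous map $f \colon Y \to H$ with $f(1) = 1$ — equivalently, any continuous map $Y \to H$ of profinite spaces, since we may always translate — produce the unique extension $F(Y) \to H$. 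For this I would use the universal property of $F(T,*)$: the map $f$ restricted to the first copy of $T$ gives a pointed map $(T,*) \to H$, hence a homomorphism $F = F(T,*) \to H$, and one checks using the relation $\theta(t, \text{copy }1)\cdot \theta(t,\text{copy }2) = \iota(t)\iota(t)^{-1} = 1$ that this homomorphism restricts to $f$ on all of $Y$. Uniqueness follows because $Y$ generates $F$ (its first-copy part already does, being the image of $T$). (4) Conversely, $Y$ converges to $1$ and the induced map $F(Y) \to F$ is surjective with an obvious inverse built from $\iota$, so it is an isomorphism.

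The step I expect to be the main obstacle is (2): checking that $Y = \theta(W)$ is genuinely a profinite space with $1$ as a legitimate ``basepoint-like'' accumulation behaviour, i.e.\ that the quotient $W \to Y$ is a morphism of profinite spaces and that no unwanted identifications occur away from the basepoint that would prevent $Y$ from freely generating $F$. Concretely one must rule out that distinct points of $T$ (or of the two copies) collapse to the same element of $F$ in a way that destroys freeness; this is handled by noting $\iota \colon T \to F$ is injective with image converging to $1$ (a standard property of free pro-$\C$ groups on pointed spaces), and that the two copies meet only at $1$ because a nontrivial element and its inverse coincide only when both are trivial in a pro-$p$ (more generally pro-$\C$) group — here one may need a short argument using that $F$ is residually-$\C$ and the generators together with their inverses form a convergent set. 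Once $Y$ is identified as an honest profinite space converging to $1$, the verification of the universal property is routine manipulation with the universal properties of $F(T,*)$ and $F(Y)$.
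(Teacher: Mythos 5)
There is a genuine gap: the space you construct is not a free basis of $F$, so the core claim of your argument is false. Your $Y=\iota(T)\cup\iota(T)^{-1}$ (the image of the wedge of two copies of $T$) carries built-in relations, namely $\iota(t)\cdot\iota(t)^{-1}=1$ for every $t$, and it also contains $1$; both features are fatal. Concretely, if $T=\{*,t\}$ then $F=F(T,*)\cong\Z_p$ has rank $1$, while the free pro-$\C$ group on your three-point space $\{1,\iota(t),\iota(t)^{-1}\}$ has rank $3$, so $F\ne F(Y)$. The universal property cannot hold in general: an arbitrary continuous map $f\colon Y\to H$ need not satisfy $f(y^{-1})=f(y)^{-1}$, so the homomorphism you build from $f$ restricted to the first copy of $T$ will in general disagree with $f$ on the second copy — the relation $\iota(t)\iota(t)^{-1}=1$ constrains the homomorphism, not $f$, so your step (3) is circular. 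Likewise the reduction ``$f(1)=1$, equivalently any continuous map, since we may always translate'' is invalid: translating $f$ changes the extension problem, and indeed no free basis of a nontrivial free pro-$\C$ group can contain the identity (a continuous map sending $1\in Y$ to a nontrivial element extends to no homomorphism). Finally, deleting $1$ from your $Y$ does not help, since when $*$ is an accumulation point of $T$ the resulting space is not compact — the same obstruction you started with.

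The missing idea is that no explicit construction inside $F$ is needed. The paper argues: if $T$ is finite, take $Y=T\smallsetminus\{*\}$ (your ``naive guess'' does work there, since $Y$ is then clopen); if $T$ is infinite, then by \cite[3.5.12]{RZ} the groups $F(T,*)$ and $F(T)$ are free pro-$\C$ groups of the same rank, hence isomorphic, so one may simply take $Y=T$. If you want to repair your approach, you would have to replace the wedge construction by a rank comparison of this kind rather than exhibiting a basis containing inverse pairs.
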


\begin{proof}
If $T$ is finite,
then $Y = T \smallsetminus \{*\}$ satisfies
$F = F(Y)$.
So assume that $T$ is infinite.
By \cite[3.5.12]{RZ},
$F(T,*)$ and $F(T)$ are free pro-$p$ groups of the same rank,
and hence $F(T,*)\cong F(T)$.
\end{proof}

\section{Relative projective groups}

Let $G$ be a profinite group and
$(\G, T)=\{G_t \suchthat t\in T\}$
be a family of subgroups indexed by a profinite space $T$.
Following \cite[Section 5.2]{R 2017} we say that
$(\G,T)$ is \textbf{continuous}
if for any open subgroup $U$ of $G$
the subset $\{ t\in T \suchthat G_x \le U\}$ is open.

For instance, $(\G,T)$ is continuous if it is \textbf{locally constant}, i.e.,
$T$ is the disjoint union of finitely many clopen subsets $T_i$
and for each $i$ there is a subgroup $A_i$ of $G$ such that
$G_t = A_i$ for every $t \in T_i$.

\begin{lem}[{\cite[Lemma 5.2.1]{R 2017}}]\label{continuous family}
Let $G$ be a profinite group and
let $\{G_t \suchthat t\in T\}$ be a collection of subgroups
indexed by a profinite space $T$.
Then the following conditions are equivalent:

\begin{enumerate}
\item[(a)] $\{G_t \suchthat t\in T\}$ is continuous;

\item[(b)] The set $\hat\G=\{(g,t)\in G\times T \suchthat t\in T, g\in G_t\}$ is closed in $G\times T$;

\item[(c)] The map $\varphi \colon T\to \Subgr(G)$, given by $\varphi(t)=G_t$, is continuous, where $\Subgr(G)$ is endowed with the \'etale topology;

\item[(d)] $\bigcup_{t\in T} G_t$ is closed in $G$.
\end{enumerate}
\end{lem}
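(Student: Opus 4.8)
The plan is to prove the chain of implications $(c)\Rightarrow(a)\Rightarrow(b)\Rightarrow(d)\Rightarrow(c)$, in which the first three are essentially formal and only the last carries real content. For $(a)\Leftrightarrow(c)$ I would simply unwind the definition of the \'etale topology on $\Subgr(G)$: a subbasis of open sets consists of the sets $\{H\in\Subgr(G)\suchthat H\le U\}$, one for each open subgroup $U\le G$ (equivalently, $\Subgr(G)=\varprojlim_N\Subgr(G/N)$ over the open normal $N\normal G$). Since $\varphi^{-1}(\{H\suchthat H\le U\})=\{t\in T\suchthat G_t\le U\}$, the assertion that $\varphi$ is continuous is verbatim condition~(a), so $(a)\Leftrightarrow(c)$ is immediate.

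For $(a)\Rightarrow(b)$: given $(g,t)\in(G\times T)\smallsetminus\hat\G$, i.e. $g\notin G_t$, use that $G_t$ is closed in $G$ to pick an open normal subgroup $N\normal G$ with $g\notin G_tN$; then $V=\{t'\in T\suchthat G_{t'}\le G_tN\}$ is open by~(a) and contains $t$, while $gN\cap G_{t'}=\emptyset$ for every $t'\in V$ (since $gN$ is disjoint from the subgroup $G_tN\supseteq G_{t'}$), so $gN\times V$ is a neighbourhood of $(g,t)$ disjoint from $\hat\G$; hence $\hat\G$ is closed. For $(b)\Rightarrow(d)$: $\bigcup_{t\in T}G_t$ is the image of the closed — hence compact — set $\hat\G$ under the continuous projection $G\times T\to G$, and a compact subset of the Hausdorff group $G$ is closed.

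The substantive implication is $(d)\Rightarrow(c)$. I would first record the auxiliary fact — proved exactly as $(a)\Rightarrow(b)$ — that the membership relation $\Lambda=\{(g,H)\in G\times\Subgr(G)\suchthat g\in H\}$ is closed in $G\times\Subgr(G)$ for the \'etale topology: if $g\notin H_0$, pick $N\normal G$ open with $g\notin H_0N$ and note that $gN\times\{H\suchthat H\le H_0N\}$ misses $\Lambda$. Granting $(d)$ and supposing $\varphi$ discontinuous, (a) fails by the step above, so there is a net $t_\alpha\to t$ in $T$ and a fixed open subgroup $U\supseteq G_t$ with $G_{t_\alpha}\not\le U$; choose $g_\alpha\in G_{t_\alpha}\smallsetminus U$. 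By~(d) the $g_\alpha$ lie in the compact set $\bigcup_sG_s$, so a subnet converges to some $g\in\bigcup_sG_s$, and $g\notin U$ because $U$ is clopen; the contradiction would come from concluding that $g\in G_t\le U$, i.e. that $(g,G_t)\in\Lambda$.

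The main obstacle is precisely this last inference. Condition~(d) on its own only says the limit $g$ lies in \emph{some} member $G_s$ of the family, and one must upgrade this to $g\in G_t$; doing so requires controlling, inside $\Subgr(G)$, the limit of the net of subgroups $G_{t_\alpha}$, and hence pinning down the exact way the family is indexed by $T$ — concretely, that $t\mapsto G_t$ presents $T$ (after collapsing the points with $G_t=1$, if any) as a subspace of $\Subgr(G)$ with the \'etale topology, so that $G_{t_\alpha}$ can converge to nothing but $G_t$; then $(g,G_t)\in\Lambda$ by closedness of $\Lambda$, as desired. Making this step precise is where I would expect to put the effort; everything else in the argument is bookkeeping.
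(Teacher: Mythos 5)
Your three easy implications are fine: $(a)\Leftrightarrow(c)$ is indeed just the observation that the \'etale topology on $\Subgr(G)$ has the sets $\{H \suchthat H\le U\}$, $U$ open in $G$, as a subbasis, and your arguments for $(a)\Rightarrow(b)$ and $(b)\Rightarrow(d)$ are correct (as is your auxiliary fact that the membership set $\Lambda$ is \'etale-closed). One warning, though: your parenthetical ``equivalently, $\Subgr(G)=\varprojlim_N\Subgr(G/N)$'' describes the \emph{strict} (patch) topology, not the \'etale one. The \'etale topology is coarser and badly non-Hausdorff: if $H_\alpha\to H$ then also $H_\alpha\to H'$ for every closed $H'\ge H$, so nets of subgroups never have unique limits. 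This is exactly the uniqueness you invoke at the end (``$G_{t_\alpha}$ can converge to nothing but $G_t$''), so that device is not available.

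The gap you flag in $(d)\Rightarrow(c)$ is not a technicality that more care about the indexing will repair: that implication is false as stated, so no cycle of implications routed through $(d)$ can be closed. Take $G=\Z_p$, $T=\{0\}\cup\{1/n \suchthat n\in\N\}$, $G_{1/n}=\Z_p$ for all $n$ and $G_0=\{1\}$; then $\bigcup_{t}G_t=\Z_p$ is closed, but $\{t \suchthat G_t\le p\Z_p\}=\{0\}$ is not open and $\hat\G$ is not closed (the points $(g,1/n)\in\hat\G$ converge to $(g,0)\notin\hat\G$ for $g\ne 1$). An injective variant with all $G_t$ nontrivial and pairwise distinct works just as well: $G=\Z_p\times\Z/p$, $G_{1/n}=p^n\Z_p\times\Z/p$, $G_0=\Z_p\times\{0\}$. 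In your notation, the limit $g$ of the $g_\alpha$ lies in some $G_s$ but genuinely need not lie in $G_t$: in these examples $G_{t_\alpha}$ converges (\'etale) to $\bigcup_s G_s$-sized subgroups but not to $G_t$, and $(d)$ gives no leverage to force it. What is actually provable — and all that the paper ever uses (it gives no proof, quoting \cite[Lemma 5.2.1]{R 2017}) — is the equivalence $(a)\Leftrightarrow(b)\Leftrightarrow(c)$ together with the one-way implication $(a)\Rightarrow(d)$, which is your $(a)\Rightarrow(b)\Rightarrow(d)$ chain; so the correct reaction to your ``main obstacle'' is not to work harder on it but to recognize that $(d)$ cannot be an equivalent condition without further hypotheses on the family.
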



The set $\hat\G$ in (b),
together with the projection $\pi\colon\hat\G \to T$ on the first coordinate,
is a \textbf{sheaf} of profinite groups.
Given a profinite group $H$, a \textbf{sheaf morphism} $\alpha \colon \hat\G \to H$
is a continuous map such that the restriction of $\alpha$ to $\hat\G(t) = \pi^{-1}(t)$ is a group homomorphism, for every $t \in T$.
For instance, the \textbf{inclusion} map $\hat\G \to G$, given by
$(g,t) \mapsto g$, is a sheaf morphism.
These notions are instrumental in the construction of free profinite products
(\cite[Section 5.1]{R 2017}).

By abuse of notation
we write simply $\G$ instead of $(\G,T)$ and also instead of $\hat\G$.

\begin{defi}
Consider the category of \textbf{profinite pairs}
$(G,\G)$,
where $G$ is a profinite group
and $\G$ is
a continuous family of closed subgroups of $G$,
closed under the conjugation in $G$.
A \textbf{morphism}
$\varphi \colon (G,\G) \to (A,\A)$
in this category
is a homomorphism $\varphi \colon G \to A$ of profinite groups
such that
$\varphi(\G') \subseteq \A'$,
that is,
for every $\Gamma \in \G$ there is $\Delta \in \A$
such that $\varphi(\Gamma) \le \Delta$;
it is an \textbf{epimorphism}, if
$\varphi(\G') = \A'$.

An \textbf{embedding problem} for $(G,\G)$
(cf. \cite[Definition 5.1.1]{HJ} or \cite[Definition 4.1]{Haran})
is a pair of morphisms 
\begin{equation}\label{EP pairs}
\big(\varphi\colon (G,\G)\to (A,\A),\ \alpha\colon (B,\B)\to (A,\A)\big)
\end{equation}
such that $\alpha$ is an epimorphism
and for every $\Gamma \in \G$
there exists $\Delta \in \B$
and a homomorphism $\gamma_\Gamma\colon \Gamma\to \Delta$
such that
$\alpha\circ\gamma_\Gamma=\varphi|_{\Gamma}$.

We say that \eqref{EP pairs} is \textbf{finite}, if $B$ is finite.
We say that \eqref{EP pairs} is \textbf{rigid},
if $\alpha$ is \textbf{rigid},
i.e.,
$\alpha|_{\Delta}$ is injective for every $\Delta \in \B$.

A \textbf{solution} of \eqref{EP pairs} is a morphism
$\gamma\colon (G,\G)\to (B,\B)$
such that $\alpha\circ\gamma=\varphi$.

We say that $G$ is \textbf{$\G$-projective} if
every finite embedding problem \eqref{EP pairs} for $(G,\G)$
has a solution.
Equivalently (\cite[Corollary 5.1.5]{HJ}),
every finite rigid embedding problem \eqref{EP pairs} for $(G,\G)$ has a solution.
\end{defi}

We remark that \cite{HJ}
uses the term \emph{strong $\G$-projective} instead of $\G$-projective
and both \cite{HJ} and \cite{Haran}
do not assume that $\G$ is a continuous family,
only that it is \'etale compact.
We do not know whether this is the same
(\cite[Problem 2.1.11 and Proposition 2.1.8]{HJ}).

However, since we do assume here that $\G$ is indexed by a profinite space,
it would be desirable to take this space into account in
the above definition.
This is the purpose of the following two sections.

\section{Piles}

\begin{defi}\label{pile}
A \textbf{pile}
$\bfG = (G,T)$
consists of a profinite group $G$,
a profinite space $T$, 
and a continuous action of $G$ on $T$ (from the right).
Denoting by $G_t$ the $G$-stabilizer of $t$,
for every $t \in T$,
we note that
$\G = \{G_t\}_{t \in T}$
is a continuous family of closed subgroups of $G$
(\cite[Lemma 5.2.2]{R 2017})
closed under the conjugation in $G$,
such that
$G_{t^g} = G_t^g$ for all $t \in T$ and $g \in G$.

A pile
$\bfG = (G,T)$
is \textbf{finite} if
both $G$ and $T$ are finite.

A \textbf{morphism} of group piles
$\alpha \colon \bfB=(B,Y) \to \bfA=(A,X)$
consists of
a group homomorphism
$\alpha \colon B \to A$
and a continuous map
$\alpha \colon Y \to X$
such that
$\alpha(y^b) = \alpha(y)^{\alpha(b)}$
for all $y \in Y$ and $b \in B$.
This implies
$\alpha(B_y) \le A_{\alpha(y)}$
for every $y \in Y$;
in particular,
denoting $\A = \{A_x\}_{x \in X}$
and $\B = \{B_y\}_{y \in Y}$,
we have
$\alpha(\B') \subseteq \A'$.

The \textbf{kernel $\Ker \alpha$ of $\alpha$}
is the kernel of the group homomorphism $\alpha \colon B \to A$.

The above morphism $\alpha$ is an \textbf{epimorphism} if
$\alpha(B) = A$, $\alpha(Y) = X$,
and for every $x \in X$ there is $y \in Y$ such that
$\alpha(y) = x$ and $\alpha(B_y) = A_x$.
(Then $\alpha(\B') = \A'$.)
It is \textbf{rigid}, if
$\alpha$ maps $B_y$ isomorphically onto $A_{\alpha(y)}$,
for all $y \in Y$,
and the induced map of the orbit spaces $Y/B \to X/A$ is a homeomorphism.
\end{defi}

\begin{rem}\label{rigid equivalent}
Let 
$\alpha \colon \bfB=(B,Y) \to \bfA=(A,X)$
be a morphism 
and let $K$ be its kernel.
The quotient map
$\pi \colon \bfB \to \bfB/K := (B/K,Y/K)$
is an epimorphism of piles
and 
there is a unique morphism
$\bar\alpha \colon \bfB/K \to \bfA$
such that
$\alpha = \bar\alpha \circ \pi$.

Moreover,
$\alpha$ is a rigid epimorphism
if and only if 
$\bar\alpha$ is an isomorphism
and
$K \cap B_y = 1$ for every $y \in Y$.
%
%
%
%
\end{rem}

\begin{constr}\label{standard}
Let $G$ be a profinite group and
$\{G_t \suchthat t \in T_0\}$
a continuous family of subgroups of $G$.
We construct the
\textbf{standard $G$-extension} $T$ of $T_0$
such that $(G,T)$ is a pile,
$T_0$ is a set of representatives of the $G$-orbits in $T$,
and $G_t$ is the $G$-stabilizer of $t$,
for every $t \in T_0$.

Let 
$T = \{(t, G_t g) \suchthat t \in T_0, G_t g \in G/G_t\}$
and let
$G$ act on $T$ by
$(t,G_t g)^\sigma = (t, G_t g\sigma)$.
Then
$\Stab_G(t,G_t g) =
\{\sigma \in G \suchthat G_t g\sigma = G_t g)\} = G_t^g$.

Identifying $t \in T_0$ with $(t,G_s 1) \in T$
we may view $T_0$ as a subset of $T$
such that
$T_0$ is a set of representatives of the $G$-orbits in $T$.
Then
$\Stab_G(t) = G_t$, for every $t \in T_0$.

If $G$ and $T_0$ are finite,
and we regard $T$ as a discrete space,
then the above map and the action are continuous,
and hence $(G,T)$ is a finite pile.

In the general case
we view $T$ as the quotient space of the profinite space $T_0 \times G$
via the map $\pi \colon T_0 \times G \to T$
given by
$(t,g) \mapsto (t, G_t g)$.
By \cite[Proposition 5.2.3]{R 2017}
this is a profinite space.
The $G$-action on $T$
is induced via $\pi$
from the continuous $G$-action on $T_0 \times G$
by multiplying the second coordinate from the right,
and therefore is continuous.

Hence $(G,T)$ is a pile. 
The embedding $T_0 \to T$ is also continuous,
and hence $T_0$ is a closed subset of $T$.

Moreover, the above construction is functorial in the following sense.
Let $H$ be another profinite group,
with a continuous family of subgroups
$\Hc=\{H_s \suchthat s \in S_0\}$,
and let
$\varphi \colon G \to H$ be a homomorphism
and
$\varphi' \colon T_0 \to S_0$ a continuous map,
such that $\varphi(G_t) \le H_{\varphi'(t)}$
for every $t \in T_0$.
Let $S$ be the standard $H$-extension of $S_0$.
Then
$\varphi, \varphi'$ induce a continuous map 
$T \to S$, namely,
$(t, G_t g) \mapsto (\varphi'(t), H_{\varphi'(t)} \varphi(g))$,
which, together with $\varphi$,
form a morphism of piles
$(G,T) \to (H,S)$.
\end{constr}

Later we shall need the following, easily verified, lemma:

\begin{lem}\label{connecting}
Let $\bfG = (G,T)$ be a pile
and $\bfA = (A,X)$, $\bfB = (B,Y)$ finite piles.
Let $\varphi \colon \bfG \to \bfA$ be a morphism
and $\psi\colon \bfG \to \bfB$ an epimorphism.
Assume that $\Ker(\psi) \le \Ker(\varphi)$
and the partition
$\{\psi^{-1}(y) \suchthat y \in Y\}$
is finer than
$\{\varphi^{-1}(x) \suchthat x \in X\}$.
Then there is a morphism
$\alpha \colon \bfB \to \bfA$
such that
$\alpha \circ \psi = \varphi$.
\end{lem}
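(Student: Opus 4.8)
The plan is to build $\alpha$ on the group and on the space separately, then check that the two pieces fit together into a pile morphism and that $\alpha\circ\psi=\varphi$. Since $\psi$ is an epimorphism of piles, the underlying group map $\psi\colon G\to B$ is surjective; as $\Ker(\psi)\le\Ker(\varphi)$, the universal property of the quotient $B\cong G/\Ker(\psi)$ provides a unique group homomorphism $\alpha\colon B\to A$ with $\alpha\circ\psi=\varphi$ on the level of groups. This part is purely formal.

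For the space component, again use that $\psi\colon T\to Y$ is surjective, so every fibre $\psi^{-1}(y)$ is nonempty. By the refinement hypothesis, $\psi^{-1}(y)$ is contained in $\varphi^{-1}(x)$ for some $x\in X$, and since $\psi^{-1}(y)\ne\emptyset$ this $x$ is unique. Define $\alpha(y):=x$. By construction $\varphi(t)=\alpha(\psi(t))$ for all $t\in T$, i.e. $\alpha\circ\psi=\varphi$ on points; and $\alpha\colon Y\to X$ is continuous because $Y$ is a finite discrete space. (Uniqueness of such an $\alpha$ also follows, since $\psi$ is surjective on $T$.)

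It remains to verify the equivariance condition $\alpha(y^b)=\alpha(y)^{\alpha(b)}$ for $y\in Y$, $b\in B$. Choose $t\in T$ with $\psi(t)=y$ and $g\in G$ with $\psi(g)=b$, using the two surjectivities. Because $\psi$ is a pile morphism, $\psi(t^g)=\psi(t)^{\psi(g)}=y^b$, hence $\alpha(y^b)=\varphi(t^g)$; and because $\varphi$ is a pile morphism, $\varphi(t^g)=\varphi(t)^{\varphi(g)}=\alpha(y)^{\alpha(b)}$. Thus $\alpha\colon\bfB\to\bfA$ is a morphism of piles with $\alpha\circ\psi=\varphi$, as required. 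The only step needing any attention is the well-definedness of the space map, which relies precisely on surjectivity of $\psi$ on spaces (nonempty fibres) together with the refinement hypothesis (uniqueness of the containing $\varphi$-fibre); everything else is routine, which is why the lemma is stated as easily verified.
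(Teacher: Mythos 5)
Your proposal is correct and is precisely the routine verification the paper has in mind (the lemma is stated as ``easily verified'' and no proof is given): define $\alpha$ on groups via the universal property of $B\cong G/\Ker(\psi)$ and on spaces by sending the nonempty fibre $\psi^{-1}(y)$ to the unique $\varphi$-fibre containing it, then check equivariance by lifting through the surjective $\psi$. All steps, including the well-definedness of the space map from surjectivity of $\psi\colon T\to Y$ plus the refinement hypothesis, are handled correctly.
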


\begin{lem}\label{decomposition 0}
Let $\bfG = (G,T)$ be a pile and
let $N_0$ be an open normal subgroup of $G$.
\begin{itemize}
\item[(a)]
Let $X$ be a partition of $T$.
Then there is a finite pile
$\bfB = (B,Y)$
and an epimorphism
$\psi \colon \bfG \to \bfB$
such that $\Ker(\psi) \le N_0$
and the partition
$\{\psi^{-1}(y) \suchthat y \in Y\}$
is finer than $X$.
\item[(b)]
Let $\varphi \colon \bfG \to \bfA$
be a morphism into a finite pile.
Then there is a finite pile $\bfB$,
an epimorphism
$\psi \colon \bfG \to \bfB$
such that $\Ker(\psi) \le N_0$,
and a morphism
$\alpha \colon \bfB \to \bfA$
such that $\alpha \circ \psi = \varphi$.
\end{itemize}
\end{lem}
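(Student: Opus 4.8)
The plan is to deduce (a) from Lemma~\ref{G-partition} by passing from $G$ to a finite quotient acting on a quotient of $T$, and then to deduce (b) from (a) together with Lemma~\ref{connecting}. For (a), first use Lemma~\ref{Stab}(a) to replace $X$ by a $G$-partition $X'$ of $T$ refining it. The continuous action of $G$ on the finite set $X'$ has open kernel $K = \Ker(G \to \Sym(X'))$, since its blocks have open stabilizers by Remark~\ref{blocks}; put $N = N_0 \cap K$, an open normal subgroup of $G$ with $N \le N_0$ for which every block of $X'$ is $N$-invariant. As the profinite group $N$ acts continuously on the profinite space $T$, the orbit space $\bar T = T/N$ is again profinite (\cite[Proposition 5.2.3]{R 2017}) and carries a continuous action of the finite group $\bar G = G/N$; since each block of $X'$ is $N$-invariant, the $G$-equivariant map $T \to X'$ factors through $\bar T$, so that $X'$ is equally a $\bar G$-partition $\bar X'$ of $\bar T$. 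Applying Lemma~\ref{G-partition} to $\bar G$ acting on $\bar T$ with the partition $\bar X'$ produces a $\bar G$-partition $\bar T = \bigdotcup_{i=1}^{n} \bar Y_i$ finer than $\bar X'$, together with points $\bar t_i \in \bar Y_i$ such that $\Stab_{\bar G}(\bar Y_i) = \bar G_{\bar t_i}$. Pulling the $\bar Y_i$ back to $T$ gives a $G$-partition $Y$ of $T$ refining $X'$, hence $X$, whose blocks are $N$-invariant, so the permutation action of $G$ on $Y$ factors through $\bar G$. Set $\bfB = (\bar G, Y)$ and let $\psi \colon \bfG \to \bfB$ consist of the quotient map $G \to \bar G$ and the map $T \to Y$ sending a point to its block; this is a morphism of piles with $\Ker\psi = N \le N_0$, and its fibre partition $\{\psi^{-1}(y)\}$ is $Y$, which refines $X$.

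It remains, and this is the heart of the matter, to check that $\psi$ is an \emph{epimorphism} of piles: since $G \to \bar G$ and $T \to Y$ are surjective, one must find, for each block $y \in Y$ corresponding to some $\bar Y_i$, a point $t \in T$ with $\psi(t) = y$ and $\psi(G_t) = B_y$. Taking $t_i \in T$ to be any lift of $\bar t_i$, one has $\psi(t_i) = y$, and a direct computation shows $\{\, g \in G \suchthat (Nt_i)^g = Nt_i \,\} = G_{t_i} N$, whence $\bar G_{\bar t_i} = G_{t_i} N / N = \psi(G_{t_i})$; since $B_y = \Stab_{\bar G}(\bar Y_i) = \bar G_{\bar t_i}$, we get $\psi(G_{t_i}) = B_y$, completing (a). I expect this epimorphism condition to be the main obstacle, and it is exactly the reason one invokes Lemma~\ref{G-partition} (with its control over block stabilizers) rather than a bare $G$-partition of $T$: for an infinite $G$ one cannot in general make block stabilizers in $T$ equal to point stabilizers, and passing to $\bar G$ acting on $\bar T$ repairs this, the downstairs identity $\Stab_{\bar G}(\bar Y_i) = \bar G_{\bar t_i}$ being the upstairs statement $\Stab_G(Y_i) = G_{t_i} N$.

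Finally, (b) reduces to (a). Write $\bfA = (A, X_A)$; since $A$ is finite, $\Ker\varphi$ is an open subgroup of $G$, so $N_0 \cap \Ker\varphi$ is again open and normal in $G$. Apply (a) to this subgroup and to the partition $X = \{\varphi^{-1}(x) \suchthat x \in \varphi(T)\}$ of $T$ (which makes sense because $\varphi \colon T \to X_A$ is continuous with finite image), obtaining a finite pile $\bfB$ and an epimorphism $\psi \colon \bfG \to \bfB$ with $\Ker\psi \le N_0 \cap \Ker\varphi$ and $\{\psi^{-1}(y)\}$ finer than $\{\varphi^{-1}(x)\}$. Then $\Ker\psi \le \Ker\varphi$, and the fibre partition of $\psi$ refines that of $\varphi$, so Lemma~\ref{connecting} yields a morphism $\alpha \colon \bfB \to \bfA$ with $\alpha \circ \psi = \varphi$, which together with $\Ker\psi \le N_0$ is exactly the assertion of (b).
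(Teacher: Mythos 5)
Your proof is correct and takes essentially the same route as the paper's: both pass to a finite quotient $G/N$ with $N \le N_0$ open normal, invoke Lemma~\ref{G-partition} to produce a block partition whose block stabilizers equal point stabilizers (which is exactly what makes $\psi$ an epimorphism of piles), and then deduce (b) from (a) via Lemma~\ref{connecting}. The only cosmetic difference is that the paper replaces $(G,T)$ by $(G/N,T/N)$ outright (choosing $N$ so that $T \to X$ factors through $T/N$), whereas you keep $T$ and pull the partition back from $T/N$, checking the stabilizer condition by hand.
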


\begin{proof}
(a)
If $N$ is an open normal subgroup of $G$,
then $(G/N, T/N)$ is a pile
and the pair of quotient maps
$(G\to G/N, T \to T/N)$ is a rigid epimorphism
$\bfG \to (G/N, T/N)$.

As $T$ is the inverse limit of $T/N$,
where $N$ runs through the open normal subgroups of $G$,
there is an open $N \normal G$ such that 
$N \le N_0$ and 
the map $T \to X$ factors through $T/N$
(\cite[Lemma 1.1.16(b)]{HJ}).
Thus, replacing $\bfG$ by $(G/N, T/N)$,
we may assume that $G$ is finite and $N_0 = 1$.

Put $B = G$
and let $\psi \colon G \to B$ be the identity.
By Lemma \ref{G-partition}
there is a $G$-partition
$Y = \{T_1, \ldots, T_n\}$ of $T$
finer than $X$,
and for every $i$ there is $t_i \in T_i$ such that
$\Stab_{G}(T_i) = \Stab_G(t_i)$.

Then 
$\bfB = (G,Y)$ is a finite pile
and the identity of $G$ together with the map $T \to Y$
define an epimorphism $\psi \colon \bfG \to \bfB$
with the required properties.

(b)
Write $\bfA = (A,X)$.
By (a)
there is a finite pile
$\bfB = (B,Y)$
and an epimorphism
$\psi \colon \bfG \to \bfB$
such that $\Ker(\psi) \le N_0$
and the partition
$\{\psi^{-1}(y) \suchthat y \in Y\}$
is finer than the partition $\{\varphi^{-1}(x) \suchthat x \in X\}$.
By Lemma~\ref{connecting}
there is a morphism $\alpha \colon B \to A$
such that
$\alpha \circ \psi = \varphi$.
\end{proof}

\begin{lem}\label{decomposition}
Let $\bfG = (G,T)$ and $\bfA = (A,X)$ be piles, $\bfA$ finite.
Let $\varphi \colon \bfG \to \bfA$ be a morphism.
Assume that 
$G_{t_1} \cap G_{t_2} = 1$
for all distinct $t_1,t_2 \in T$.
Then there is a finite pile $\bfAhat = (\Ahat,\Xhat)$
and a factorization of $\varphi$ into 
an epimorphism
$\varphihat \colon \bfG \to \bfAhat$
and a morphism
$\varphi_0 \colon \bfAhat \to \bfA$
such that
if $\xhat_1,\xhat_2 \in \Xhat$ satisfy
$\varphi_0(\xhat_1) \ne \varphi_0(\xhat_2)$,
then
$\Ahat_{\xhat_1} \cap \Ahat_{\xhat_2} \le \Ker \varphi_0$.
\end{lem}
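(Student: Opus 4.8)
The plan is to refine the fibre partition of $\varphi$ over $X$ so finely that stabilisers of blocks lying over different points of $X$ already intersect inside $\Ker\varphi$, and then to realise this refinement as a pile quotient by invoking Lemma~\ref{decomposition 0}. Set $N_0 = \Ker(\varphi\colon G\to A)$, an open normal subgroup of $G$ because $A$ is finite. Since $\varphi\colon T\to X$ is $G$-equivariant and the fibres of $\varphi$ are pairwise disjoint, each $\varphi^{-1}(x)$ is a clopen $G$-block; also the set $D = \{(t_1,t_2)\in T\times T \suchthat \varphi(t_1)\ne\varphi(t_2)\}$ is clopen in $T\times T$, hence compact.

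First I would establish the pointwise separation: for every $(t_1,t_2)\in D$ there are $G$-blocks $V_1\ni t_1$ and $V_2\ni t_2$ with $\Stab_G(V_1)\cap\Stab_G(V_2)\le N_0$. This is where the hypothesis enters: as $\varphi(t_1)\ne\varphi(t_2)$ forces $t_1\ne t_2$, we have $G_{t_1}\cap G_{t_2}=1$, so by Lemma~\ref{Stab}(f) the clopen subgroups $\Stab_G(U_1)\cap\Stab_G(U_2)$, $U_i\in\B(t_i)$, form a downward directed family with intersection $\{1\}\subseteq N_0$; since $G\smallsetminus N_0$ is compact, one member of the family already lies in $N_0$. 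The open boxes $V_1\times V_2$ so obtained cover $D$, so finitely many of them — say $V_1^{(k)}\times V_2^{(k)}$, $k=1,\dots,m$, with $\Stab_G(V_1^{(k)})\cap\Stab_G(V_2^{(k)})\le N_0$ — already cover $D$. I would then let $\mathcal{Q}_0$ be the partition of $T$ into the atoms of the finite Boolean algebra of clopen sets generated by all the $V_j^{(k)}$ together with the fibres $\varphi^{-1}(x)$, $x\in X$.

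Next I would feed $\mathcal{Q}_0$ and $N_0$ into Lemma~\ref{decomposition 0}(a): this produces a finite pile $\bfAhat=(\Ahat,\Xhat)$ and an epimorphism $\varphihat\colon\bfG\to\bfAhat$ with $\Ker\varphihat\le N_0$ for which the $G$-partition $\mathcal{Q}=\{\varphihat^{-1}(\xhat)\suchthat\xhat\in\Xhat\}$ refines $\mathcal{Q}_0$. Since $\Ker\varphihat\le N_0=\Ker\varphi$ and $\mathcal{Q}$ refines $\mathcal{Q}_0$, hence the fibre partition of $\varphi$, Lemma~\ref{connecting} supplies a morphism $\varphi_0\colon\bfAhat\to\bfA$ with $\varphi_0\circ\varphihat=\varphi$; here $\Ker(\varphi_0\colon\Ahat\to A)=N_0/\Ker\varphihat$. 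To verify the required property, take $\xhat_1,\xhat_2\in\Xhat$ with $x_1:=\varphi_0(\xhat_1)\ne\varphi_0(\xhat_2)=:x_2$ and put $W_i=\varphihat^{-1}(\xhat_i)$; then $W_i$ is a $G$-block with $\varphi(W_i)=\{x_i\}$, so $W_i\subseteq\varphi^{-1}(x_i)$, and $\Ahat_{\xhat_i}=\Stab_G(W_i)/\Ker\varphihat$. Picking $t_i\in W_i$ we get $(t_1,t_2)\in D$, hence $(t_1,t_2)\in V_1^{(k)}\times V_2^{(k)}$ for some $k$; since $\mathcal{Q}$ refines $\mathcal{Q}_0$ and $t_i\in W_i\cap V_i^{(k)}$, the atom of $\mathcal{Q}_0$ through $t_i$ — and thus $W_i$ — lies in $V_i^{(k)}$, so by Lemma~\ref{Stab}(e) $\Stab_G(W_i)\le\Stab_G(V_i^{(k)})$. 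Therefore $\Stab_G(W_1)\cap\Stab_G(W_2)\le\Stab_G(V_1^{(k)})\cap\Stab_G(V_2^{(k)})\le N_0$, and dividing by $\Ker\varphihat\le N_0$ yields $\Ahat_{\xhat_1}\cap\Ahat_{\xhat_2}\le N_0/\Ker\varphihat=\Ker\varphi_0$.

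I expect the main obstacle to be precisely the uniformisation step: passing from a separating pair of blocks for each individual $(t_1,t_2)$ to one finite partition of $T$ doing the job for all such pairs at once, which is what forces the compactness of $D$ and the finite subcover. A second delicate point is checking that the canonical partition blocks $W_i=\varphihat^{-1}(\xhat_i)$ coming out of Lemma~\ref{decomposition 0} still satisfy the stabiliser bound established for the ad hoc covering blocks $V_i^{(k)}$; this is handled by the monotonicity in Lemma~\ref{Stab}(e), provided one is careful to express $\Ahat_{\xhat_i}$ as $\Stab_G(\varphihat^{-1}(\xhat_i))/\Ker\varphihat$ rather than as $\varphihat(G_{t_i})$, so that the whole statement reduces to the inclusion $\Stab_G(W_1)\cap\Stab_G(W_2)\le N_0$ inside $G$ itself.
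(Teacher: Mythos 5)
Your argument is correct, and it reaches the conclusion by a route that differs from the paper's in its technical execution, although the broad strategy (a compactness argument to uniformize the separation over distinct points of $X$, followed by Lemma~\ref{decomposition 0}) is the same. The paper uniformizes on the group side: for each pair $(t_1,t_2)$ in your set $D$ (called $R$ there) it picks an open normal $N \normal G$ with $G_{t_1}N\cap G_{t_2}N\le\Ker\varphi$, notes that the same $N$ works for all nearby pairs, uses compactness of $R$ to get a single $N$ valid for all pairs, applies Lemma~\ref{decomposition 0}(b) with this $N$, and in the verification uses the stabilizer-matching clause of a pile epimorphism to choose $t_i$ with $\varphihat(G_{t_i})=\Ahat_{\xhat_i}$, the condition $\Ker\varphihat\le N$ being what turns the intersection of images into the image of an intersection. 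You uniformize on the space side instead: the finitely many separating pairs of $G$-blocks are built into the partition handed to Lemma~\ref{decomposition 0}(a), after which Lemma~\ref{connecting} recovers part (b) with the extra control on the fibre partition, and your verification replaces chosen representatives $t_i$ by the whole fibres $W_i=\varphihat^{-1}(\xhat_i)$, using the monotonicity of block stabilizers from Lemma~\ref{Stab}(e). This buys two small economies: you need only $\Ker\varphihat\le\Ker\varphi$ rather than containment in a specially chosen $N$, and you use only surjectivity of $\varphihat$ on groups and spaces, not the clause $\varphihat(G_{t_i})=\Ahat_{\xhat_i}$. The one place where a line of justification should be added is the identity $\Ahat_{\xhat}=\varphihat\bigl(\Stab_G(\varphihat^{-1}(\xhat))\bigr)$, which you assert rather than prove; it does hold, since if $\varphihat(g)\in\Ahat_{\xhat}$ then the $G$-equivariance of $\varphihat\colon T\to\Xhat$ gives $\varphihat^{-1}(\xhat)^{g}=\varphihat^{-1}(\xhat)$, and conversely, but note that it genuinely uses surjectivity of $\varphihat\colon G\to\Ahat$ (for a mere morphism only the inclusion $\varphihat(\Stab_G(\varphihat^{-1}(\xhat)))\le\Ahat_{\xhat}$ would be available), so spelling this out would make your proof complete.
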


\begin{proof}
Let $R = \{(t_1,t_2) \suchthat \varphi(t_1) \ne \varphi(t_2)\}$.
This is a clopen subset of $T \times T$, since $X$ is finite.
If $(t_1,t_2) \in R$, then 
$t_1 \ne t_2$,
and hence $G_{t_1} \cap G_{t_2} = 1$.
Therefore there is an open $N \normal G$ such that
\begin{equation}\label{intersection in ker}
G_{t_1}N \cap G_{t_2}N \le \Ker \varphi.
\end{equation}
If $(t'_1,t'_2) \in T \times T$ is sufficiently close to $(t_1,t_2)$, then 
$G_{t'_i}N \le G_{t_i}N$, for $i=1,2$,
hence
$G_{t'_1}N \cap G_{t'_2}N \le \Ker \varphi$
as well.

As $R$ is compact, there is an open $N \normal G$
such that \eqref{intersection in ker} holds
simultaneously for all $(t_1,t_2) \in R$.

By Lemma~\ref{decomposition 0}(b)
there is a finite pile $\bfAhat = (\Ahat,\Xhat)$,
an epimorphism $\varphihat \colon \bfG \to \bfAhat$,
and a morphism $\varphi_0 \colon \bfAhat \to \bfA$
such that
$\varphi_0 \circ \varphihat = \varphi$
and
$\Ker \varphihat \le N$.

Let $\xhat_1,\xhat_2 \in \Xhat$ such that
$\varphi_0(\xhat_1) \ne \varphi_0(\xhat_2)$.
Then
there are $t_1,t_2 \in T$ such that
$\varphihat(t_i) = \xhat_i$
and
$\varphihat(G_{t_i}) = \Ahat_{\xhat_i}$,
for $i = 1,2$.
Then $(t_1,t_2) \in R$,
hence \eqref{intersection in ker} holds.
We have $\Ker \varphihat \le N, G_{t_1} N, G_{t_2} N$, hence
\begin{multline*}
\Ahat_{\xhat_1} \cap \Ahat_{\xhat_2} =
\varphihat(G_{t_1}) \cap \varphihat(G_{t_2}) \le
\varphihat(G_{t_1} N) \cap \varphihat(G_{t_2} N) =
\\
\varphihat(G_{t_1} N \cap G_{t_2} N) \le
\varphihat(\Ker \varphi) = \Ker \varphi_0.
\end{multline*}
\end{proof}

\begin{defi}\label{def cartesian square}
A commutative diagram of piles
\begin{equation}\label{cartesian diagram}
\xymatrix{
\bfBhat \ar[r]^\alphahat \ar[d]^p
& \bfAhat \ar[d]_{\varphi_0}
\\
\bfB \ar[r]^\alpha & \bfA
}
\qquad
\qquad
\qquad
\xymatrix{
\bfBhat = (\Bhat,\Yhat), \ \bfAhat = (\Ahat,\Xhat),
\\
\bfB = (B,Y), \ \bfA = (A,X)
}
\end{equation}
is called a \textbf{cartesian square}
if, up to an isomorphism,
$\bfBhat = \bfB \times_{\bfA} \bfAhat$,
that is,
$\Bhat = B \times_{A} \Ahat$,\
$\Yhat = Y \times_{X} \Xhat$,
and $p,\alphahat$ are the coordinate projections.
\end{defi}

\begin{lem}\label{cartesian rigid}
Let \eqref{cartesian diagram} be a cartesian diagram.
If $\alpha$ is a rigid epimorphism,
then so is $\alphahat$.
\end{lem}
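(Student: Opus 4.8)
I want to show that if $\bfBhat = \bfB \times_{\bfA} \bfAhat$ and $\alpha \colon \bfB \to \bfA$ is a rigid epimorphism, then the projection $\alphahat \colon \bfBhat \to \bfAhat$ is a rigid epimorphism. Recall $\alpha$ rigid means $\alpha$ maps $B_y$ isomorphically onto $A_{\alpha(y)}$ for every $y \in Y$ and induces a homeomorphism $Y/B \to X/A$; and $\alpha$ being an epimorphism adds that $\alpha(B) = A$, $\alpha(Y) = X$, and for every $x$ there is $y$ over $x$ with $\alpha(B_y) = A_x$ (which for rigid $\alpha$ is automatic once $\alpha(Y)=X$). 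I would proceed in three steps: first the group-level statement ($\Bhat \to \Ahat$ is surjective and restricts to isomorphisms on point stabilizers), then the space-level statement ($\Yhat \to \Xhat$ is surjective and the orbit map is a homeomorphism), and finally assemble these into the definition of rigid epimorphism of piles.

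\emph{Step 1 (groups).} Here $\Bhat = B \times_A \Ahat = \{(b,\ahat) : \alpha(b) = \varphi_0(\ahat)\}$ with $\alphahat(b,\ahat) = \ahat$ and $p(b,\ahat) = b$. Surjectivity of $\alphahat$ onto $\Ahat$: given $\ahat \in \Ahat$, since $\alpha \colon B \to A$ is surjective pick $b$ with $\alpha(b) = \varphi_0(\ahat)$; then $(b,\ahat) \in \Bhat$. For the stabilizer of $\yhat = (y,\xhat) \in \Yhat = Y \times_X \Xhat$: I claim $\alphahat$ maps $\Bhat_{\yhat}$ isomorphically onto $\Ahat_{\xhat}$. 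The point stabilizer $\Bhat_{\yhat}$ consists of the $(b,\ahat)$ fixing $(y,\xhat)$, i.e. $b \in B_y$ and $\ahat \in \Ahat_{\xhat}$ with the compatibility $\alpha(b) = \varphi_0(\ahat)$; since $\varphi_0(\Ahat_{\xhat}) \le A_{\varphi_0(\xhat)} = A_{\alpha(y)}$ and $\alpha$ restricts to an \emph{isomorphism} $B_y \xrightarrow{\sim} A_{\alpha(y)}$, for each $\ahat \in \Ahat_{\xhat}$ there is exactly one $b \in B_y$ with $\alpha(b) = \varphi_0(\ahat)$. Thus $\alphahat \colon \Bhat_{\yhat} \to \Ahat_{\xhat}$ is a bijective homomorphism of profinite groups, hence an isomorphism.

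\emph{Step 2 (spaces and orbits).} The fibre product $\Yhat = Y \times_X \Xhat$ over the surjection $Y \to X$ surjects onto $\Xhat$ by the same argument as for groups, using surjectivity of $Y \to X$. For the orbit spaces I must show the induced continuous map $\Yhat/\Bhat \to \Xhat/\Ahat$ is a homeomorphism; since both sides are compact Hausdorff it suffices to check it is a continuous bijection. Surjectivity follows from surjectivity of $\alphahat \colon \Yhat \to \Xhat$. For injectivity: suppose $\yhat_1 = (y_1,\xhat_1)$ and $\yhat_2 = (y_2,\xhat_2)$ have $\alphahat(\yhat_1) = \xhat_1$ and $\alphahat(\yhat_2) = \xhat_2$ in the same $\Ahat$-orbit, say $\xhat_1^{\ahat} = \xhat_2$ for some $\ahat \in \Ahat$. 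Applying $\varphi_0$ and projecting: $\varphi_0(\xhat_1)$ and $\varphi_0(\xhat_2)$ lie in one $A$-orbit; but $\varphi_0 \colon \Xhat \to X$ is not directly what I use — rather, the images of $y_1, y_2$ in $X$, namely $\varphi_0(\xhat_1)$ and $\varphi_0(\xhat_2)$, lie in the same $A$-orbit, so since $\alpha$ induces a homeomorphism $Y/B \to X/A$, the points $y_1, y_2$ lie in the same $B$-orbit: $y_1^{b} = y_2$ for some $b \in B$. Now I need to correct $b$ so that it is compatible with $\ahat$. Replacing $b$ by $b b'$ with $b' \in B_{y_2}$ changes $\alpha(b)$ within the coset $\alpha(b) A_{\alpha(y_2)}$, and since $\alpha(b)$ and $\varphi_0(\ahat)$ both send $\varphi_0(\xhat_1) \mapsto \varphi_0(\xhat_2)$ in $X$, they differ by an element of $A_{\varphi_0(\xhat_2)} = A_{\alpha(y_2)}$; using that $\alpha$ maps $B_{y_2}$ \emph{onto} $A_{\alpha(y_2)}$, I can choose $b' \in B_{y_2}$ so that $\alpha(b b') = \varphi_0(\ahat)$. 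Then $(bb', \ahat) \in \Bhat$ and $\yhat_1^{(bb',\ahat)} = (y_1^{bb'}, \xhat_1^{\ahat}) = (y_2, \xhat_2) = \yhat_2$, so $\yhat_1, \yhat_2$ are in the same $\Bhat$-orbit.

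\emph{Step 3 (assembly) and main obstacle.} Combining: $\alphahat \colon \Bhat \to \Ahat$ and $\alphahat \colon \Yhat \to \Xhat$ are surjective, $\alphahat$ carries each $\Bhat_{\yhat}$ isomorphically onto $\Ahat_{\alphahat(\yhat)}$, and the orbit map $\Yhat/\Bhat \to \Xhat/\Ahat$ is a homeomorphism — which is exactly the definition of rigid epimorphism (note that the epimorphism condition "for every $\xhat$ there is $\yhat$ over it with $\alphahat(\Bhat_{\yhat}) = \Ahat_{\xhat}$" is subsumed, since every $\yhat$ over $\xhat$ already has this property). The step I expect to be the real work is the orbit-injectivity argument in Step 2: one must carefully lift an equivalence witnessed simultaneously in the $Y$-coordinate (via the homeomorphism $Y/B \cong X/A$) and in the $\Xhat$-coordinate (via $\ahat$), and then reconcile the two lifts through a stabilizer element — this is where rigidity of $\alpha$ (that $\alpha \colon B_{y} \to A_{\alpha(y)}$ is onto, indeed iso) is used in an essential way. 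Everything else is routine fibre-product bookkeeping plus the standard fact that a continuous bijection of compact Hausdorff spaces is a homeomorphism.
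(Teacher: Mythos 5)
Your proof is correct and follows essentially the same route as the paper: identify $\Stab_{\Bhat}(y,\xhat)$ as $B_y\times_{A_x}\Ahat_{\xhat}$ and use that $\alpha$ is an isomorphism on $B_y$ to get the stabilizer isomorphisms, then prove bijectivity of the orbit map by lifting an $\Ahat$-conjugacy to a $\Bhat$-element via the bijection $Y/B\to X/A$ plus surjectivity of $\alpha$ on point stabilizers (the paper first normalizes so that $\xhat'=\xhat$ and then produces the element $(\kappa,1)$ with $\kappa\in\Ker\alpha$, while you adjust the lift by an element of $B_{y_2}$ — the same idea in slightly different order). Your explicit remark that a continuous bijection of the compact orbit spaces is a homeomorphism fills in a point the paper leaves implicit.
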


\begin{proof}

Let $(y,\xhat) \in \Bhat$. Then
\begin{multline*}
\Stab_{\Bhat}(y,\xhat) =
\{(b,\ahat) \in \Bhat \suchthat (y^b, \xhat^{\ahat}) = (y,\xhat)\} =
\\
\{(b,\ahat) \in \Bhat \suchthat b \in B_y,\, \ahat \in \Ahat_{\xhat}\} =
B_y \times_{A_x} \Ahat_{\xhat},
\end{multline*}
where $x = \alpha(y) = \varphi_0(\xhat)$.
As $\alpha$ maps $B_y$ isomorphically onto $A_x$, \
$\alphahat$ maps $B_y \times_{A_x} \Ahat_{\xhat}$ isomorphically onto $\Ahat_{\xhat}$.

As $\alpha \colon Y \to X$ is surjective,
so is $\alphahat \colon \Yhat \to \Xhat$,
and therefore also the induced map
$\Yhat/\Bhat \to \Xhat/\Ahat$.
We have to show that it is injective,
i.e., that
$(y, \xhat), (y', \xhat') \in Y \times_{X} \Xhat$,
such that $\xhat' = \xhat^{\ahat}$ for some $\ahat \in \Ahat$,
are in the same $\Bhat$-orbit.

Replacing $(y', \xhat')$ by $(y', \xhat')^{{\bhat}^{-1}}$,
where $\alphahat(\bhat) = \ahat$,
we may assume that $\xhat' = \xhat$.
Put $x = \varphi_0(\xhat)$,
then $\alpha(y) = \alpha(y') = x$.
As $Y/B \to X/A$ is a bijection,
there is $b \in B$ such that $y' = y^b$.
Apply $\alpha$ to get that
$x = x^{\alpha(b)}$.
Therefore $\alpha(b) \in A_x = \alpha(B_y)$,
whence $b = \beta \kappa$,
where $\beta \in B_y$ and $\kappa \in \Ker(\alpha)$.

It follows that $(y', \xhat') = (y^{\beta \kappa},\xhat) =
(y^{\kappa},\xhat) = (y,\xhat)^{(\kappa,1)}$.
\end{proof}


\section{Projective piles}

\begin{defi}\label{proj pile}
An \textbf{embedding problem} for a pile $\bfG$ is a pair
\begin{equation}\label{EP}
(\varphi\colon\bfG\to\bfA,\ \alpha\colon\bfB\to\bfA)
\end{equation}
of morphisms of group piles
such that $\alpha$ is a rigid epimorphism.
It is \textbf{finite}, if $\bfB$ is finite.

A \textbf{solution} of \eqref{EP} is a
morphism $\gamma\colon \bfG\to \bfB$
such that $\alpha\circ\gamma=\varphi$.

A pile $\bfG$ is \textbf{projective},
if every finite embedding problem for $\bfG$
has a solution.
\end{defi}

\begin{example}\label{basic pile}
Let $\C$ be a family of finite groups 
closed under quotients, subgroups, and extensions.
Let $\{G_t\}_{t \in T_0}$
be a finite family of finite $\C$-groups
and $F$ a finitely generated free pro-$\C$ group.
Form the free pro-$\C$ product
$G = F \amalg (\coprod_{t \in T_0} G_t)$
and let 
$T$ be the standard $G$-extension of $T_0$.
It is easy to see that
$\bfG = (G,T)$ is a projective pile.

We call a pile of this type
a \textbf{basic pro-$\C$ pile}.
\end{example}

\begin{lem}\label{complete cartesian}
Let $\alphahat \colon \bfBhat \to \bfAhat$ be a rigid epimorphism.
Assume that $\Ker \alphahat$ is a finite group.
Then there is a cartesian square \eqref{cartesian diagram}
in which $\alpha \colon \bfB \to \bfA$ 
is a rigid epimorphism of finite piles.
\end{lem}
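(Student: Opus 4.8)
\begin{proof}[Proposal for the proof]
The plan is to reduce at once to the case $\bfAhat=\bfBhat/K$, to build $\bfB$ as a carefully chosen \emph{finite} quotient pile of $\bfBhat$, to set $\bfA=\bfB/\bar K$ for $\bar K$ the image of $K$, and to verify that the resulting square is cartesian. Write $\bfBhat=(\Bhat,\Yhat)$, $\bfAhat=(\Ahat,\Xhat)$ and $K=\Ker\alphahat$, a finite normal subgroup of $\Bhat$. By Remark~\ref{rigid equivalent}, since $\alphahat$ is a rigid epimorphism, the induced $\bfBhat/K\to\bfAhat$ is an isomorphism and $K\cap\Bhat_y=1$ for every $y\in\Yhat$; so we identify $\bfAhat=\bfBhat/K$ with $\alphahat$ the quotient map. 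In particular no nontrivial element of $K$ fixes a point of $\Yhat$, i.e. $K$ acts freely on $\Yhat$.

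Next I would produce a finite clopen partition $\mathcal U=\{U_i\}$ of $\Yhat$ each of whose blocks meets each $K$-orbit in at most one point. For this, note that $\Delta_K:=\bigcup_{k\in K\smallsetminus\{1\}}\{(y,y^k)\suchthat y\in\Yhat\}$ is a finite union of graphs of homeomorphisms of $\Yhat$, hence closed in $\Yhat\times\Yhat$, and it is disjoint from the diagonal because the action is free. Separating these two closed sets by a clopen neighbourhood of the diagonal, and then shrinking that neighbourhood to one of the form $\bigcup_i U_i\times U_i$ attached to a finite clopen partition $\{U_i\}$ of $\Yhat$ (such ``square'' neighbourhoods are cofinal among clopen neighbourhoods of the diagonal of a profinite space, since $\Yhat=\varprojlim\Yhat_j$ and in a finite space the diagonal is $\bigcup\{pt\}\times\{pt\}$), we get $U_i\times U_i\cap\Delta_K=\emptyset$ for each $i$, which is exactly the desired property. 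Fix also an open normal subgroup $M_1\normal\Bhat$ with $M_1\cap K=1$ (possible since $K$ is finite).

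Now apply Lemma~\ref{decomposition 0}(a) to the pile $\bfBhat$, the open normal subgroup $M_1$, and the partition $\mathcal U$: it yields a finite pile $\bfB=(B,Y)$ and an epimorphism $p\colon\bfBhat\to\bfB$ with $\Ker p\le M_1$ and with the partition $\{p^{-1}(w)\suchthat w\in Y\}$ of $\Yhat$ finer than $\mathcal U$. Thus $\Ker p\cap K=1$, and whenever $p(y)=p(y')$ with $y'=y^k$ for some $k\in K$, the points $y,y'$ lie in a common block of $\mathcal U$, forcing $y'=y$. Put $\bar K=p(K)$ (so $\bar K\cong K$ since $\Ker p\cap K=1$), $\bfA=\bfB/\bar K$, and let $\alpha\colon\bfB\to\bfA$ and $\varphi_0\colon\bfAhat=\bfBhat/K\to\bfA$ be the canonical quotient morphisms of piles (this $\varphi_0$ is well defined because $p$ carries $K$ onto $\bar K$). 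The square with edges $p,\alphahat,\alpha,\varphi_0$ commutes, since all four composites $\bfBhat\to\bfA$ are the canonical quotient.

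It remains to check that $\alpha$ is a rigid epimorphism of finite piles and that the square is cartesian. For rigidity of $\alpha$, by Remark~\ref{rigid equivalent} it suffices to show $\bar K\cap\Stab_B(w)=1$ for every $w\in Y$: if $p(k)\in\bar K$ fixes $w$, choose $y$ with $p(y)=w$; then $p(y^k)=p(y)^{p(k)}=w=p(y)$, so $y^k=y$ by the separating property, whence $k\in K\cap\Bhat_y=1$. For ``cartesian'' (Definition~\ref{def cartesian square}) one shows that the natural map $\bfBhat\to\bfB\times_{\bfA}\bfAhat$, $x\mapsto(p(x),\alphahat(x))$ on groups and $y\mapsto(p(y),\alphahat(y))$ on spaces, is an isomorphism. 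Injectivity on groups is immediate from $\Ker p\cap K=1$; injectivity on spaces is immediate from the separating property. Surjectivity in both cases is a short chase: given a compatible pair $(b,a)$, write $b=p(x_1)$, $a=x_2K$; the compatibility $\alpha(b)=\varphi_0(a)$ gives $x_1^{-1}x_2\in K\cdot\Ker p$, say $x_1^{-1}x_2=\kappa n$, and then $x_1n\,(=x_2\kappa^{-1})$ maps to $(b,a)$; the space case is identical with $\kappa$ replaced by a suitable element of $K$. Since $\Yhat$ is compact and $\bfB\times_{\bfA}\bfAhat$ is Hausdorff, the bijective continuous space map is a homeomorphism, and the group isomorphism is clear, so the square is cartesian, completing the proof.
\end{proof}

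\medskip
\noindent\emph{Remark on the difficulty.} The only non-formal step is the construction of the partition $\mathcal U$: recognising that rigidity of $\alphahat$ forces the finite group $K$ to act freely on $\Yhat$, and then separating the closed set $\Delta_K$ from the diagonal by a ``square'' clopen neighbourhood. Once $\mathcal U$ is in hand, the rest is an application of Lemma~\ref{decomposition 0}(a) and routine verification against the definitions of rigid epimorphism and cartesian square.
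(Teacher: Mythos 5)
Your proof is correct, and it follows the same basic strategy as the paper (reduce via Remark~\ref{rigid equivalent} to $\bfAhat=\bfBhat/K$ with $K$ acting freely on $\Yhat$, pass to a finite quotient pile that neither collapses $K$ nor merges points of a $K$-orbit, and take $\bfA$ to be the quotient by the image of $K$), but the execution is genuinely different in three places. First, the paper performs the reduction in two stages: it first replaces $\Bhat$ by a finite quotient $\Bhat/N$ with $N\cap K=1$ and $\Bhat_{\yhat}N\cap KN=N$, observing that this step itself sits in a cartesian square with rigid bottom and invoking that compositions of cartesian squares are cartesian; only then does it handle the space. You instead do everything in one step by feeding your partition and an open normal $M_1$ with $M_1\cap K=1$ into Lemma~\ref{decomposition 0}(a), which is cleaner and avoids the composition-of-cartesian-squares argument (at the price of having to verify cartesianness of the final square directly, which you do correctly via the canonical map to the fibre product, with compactness giving the homeomorphism on spaces). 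Second, where the paper obtains the orbit-separating partition from Lemma~\ref{G-partition} applied to the pile $(K,\Yhat)$ (getting $K$-blocks with trivial $K$-stabilizer), you build it by hand, separating the diagonal from the closed set $\Delta_K$ by a ``square'' clopen neighbourhood; this is a correct and more elementary argument, and it also lets you dispense with the paper's auxiliary condition $\Bhat_{\yhat}N\cap KN=N$, since your rigidity check for $\alpha\colon\bfB\to\bfB/\bar K$ needs only $\Ker p\cap K=1$ together with the fibre-separation property. Third, your surjectivity chase has a harmless notational slip ($x_1^{-1}x_2=\kappa n$ versus the decomposition $n\kappa$ actually used for $x_1n=x_2\kappa^{-1}$), which is immaterial because $K\Ker p=\Ker p\,K$. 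Overall the proposal is a valid alternative proof of the lemma.
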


\begin{proof}
Write $\bfBhat = (\Bhat, \Yhat)$ and let $K = \Ker \alphahat$.
By Remark~\ref{rigid equivalent} we have
$(\bigcup_{\yhat \in \Yhat}\Bhat_{\yhat}) \cap (K\smallsetminus \{1\})
= \emptyset$
and
we may assume that $\bfAhat = \bfBhat/K$
and
$\alphahat$ is the quotient map.
By 
\cite[Lemma 5.2.1]{R 2017},
$\bigcup_{\yhat \in \Yhat}\Bhat_{\yhat}$
is a closed subset of $\Bhat$.
As $K$ is finite, there is an open $N \normal \Bhat$
such that 
$K \cap N = 1$
and
$(\bigcup_{\yhat \in \Yhat}\Bhat_{\yhat}) \cap (K\smallsetminus \{1\})
= \emptyset$,
whence
$\Bhat_{\yhat}N \cap KN = N$,
for every $\yhat \in \Yhat$.
For every such $N$ the diagram
\begin{equation*}
\xymatrix{
\bfBhat \ar[r]^\alphahat \ar[d]
& \bfBhat/K \ar[d]\rlap{$=\bfAhat $}
\\
\bfBhat/N \ar[r] & \bfBhat/\alphahat(N)
}
\end{equation*}
is cartesian and
by Remark~\ref{rigid equivalent}
the bottom map is a rigid epimorphism.
Since a composition of cartesian diagrams is again a cartesian diagram,
we may replace
$\bfBhat$ by $\bfBhat/N $ to assume that $\Bhat$ is a finite group.

As $\alphahat$ is rigid,
$\Stab_K(\yhat) = 1$ for every $\yhat \in \Yhat$.
Thus, by Lemma~\ref{G-partition},
applied to the pile $(K,\Yhat)$,
there is a partition $Y = \{\Yhat_i\}_{i=1}^n$ of $\Yhat$
such that
$\Stab_K(\Yhat_i) = 1$ 
for every $i$.
Replacing $Y$ by a refinement we may assume that $Y$ is a $\Bhat$-partition.
Thus $\Yhat \to Y$, together with the identity of $\Bhat$,
induces a morphism of piles
$p \colon \bfBhat \to \bfB := (\Bhat, Y)$.
Then
\begin{equation*}
\xymatrix{
\bfBhat \ar[r]^\alphahat \ar[d]_{p} & \bfBhat/K \ar[d]
\\
\bfB \ar[r] & \bfB/K
}
\end{equation*}
is a cartesian square,
because
$\Stab_K(\Yhat_i) = 1$ implies that 
$\Yhat_i \cap \Yhat_i^\kappa = \emptyset$
for every $1 \ne \kappa \in K$,
hence $\Yhat_i$ contains at most one element of 
every $K$-orbit in $\Yhat$.
Moreover,
$K \cap \Stab_{\Bhat}(\Yhat_i) = \Stab_K(\Yhat_i) = 1$,
hence the bottom map is a rigid epimorphism.
\end{proof}

\begin{prop}\label{general EP}
Let $\bfG$ be a projective pile.
Then every embedding problem
(not necessarily finite)
for $\bfG$ has a solution.
\end{prop}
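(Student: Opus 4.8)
The goal is to bootstrap from finite embedding problems to arbitrary ones, and the standard device is to write a general embedding problem as an inverse limit of finite ones and then invoke projectivity level by level, checking that the partial solutions are compatible (or can be made compatible). The plan is as follows.

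First, let $(\varphi\colon\bfG\to\bfA,\ \alpha\colon\bfB\to\bfA)$ be an arbitrary embedding problem for $\bfG$, with $\alpha$ a rigid epimorphism. Write $\bfB=(B,Y)$ and $\bfA=(A,X)$. Since $B$ is a profinite group and $Y$ a profinite space, we can write $B=\varprojlim_i B/N_i$ over the open normal subgroups $N_i$ of $B$, and refine so that the map $Y\to X$ factors through a compatible system of finite quotients $Y_i$ of $Y$ on which $B/N_i$ acts; this realizes $\bfB$ as $\varprojlim_i \bfB_i$ with each $\bfB_i=(B/N_i,Y_i)$ a finite pile, and the transition maps rigid epimorphisms (by Remark \ref{rigid equivalent}, using that $N_i\cap B_y=1$ may be arranged since the union $\bigcup_y B_y$ is closed and we can separate it from any given finite piece — though here we must be slightly careful, as the full $N_i$ need not meet every $B_y$ trivially). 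For each $i$ set $\bfA_i$ to be the pushout/image pile so that $\alpha$ factors as $\bfB\to\bfB_i\to\bfA_i\to\bfA$, compatibly; then $(\varphi_i\colon\bfG\to\bfA_i,\ \alpha_i\colon\bfB_i\to\bfA_i)$ is a finite embedding problem for $\bfG$, where $\varphi_i$ is the composite $\bfG\xrightarrow{\varphi}\bfA\to\bfA_i$.

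Second, by projectivity of $\bfG$, each finite embedding problem $(\varphi_i,\alpha_i)$ has a solution $\gamma_i\colon\bfG\to\bfB_i$. The difficulty — and this is the main obstacle — is that the $\gamma_i$ need not form a compatible system, so one cannot simply pass to the inverse limit. The fix is the usual one: consider the nonempty closed subset $S_i\subseteq\Hom_{\text{piles}}(\bfG,\bfB_i)$ of solutions of $(\varphi_i,\alpha_i)$; the transition maps $\bfB_j\to\bfB_i$ (for $j\ge i$) send $S_j$ into $S_i$, so $\{S_i\}$ is an inverse system of nonempty compact (Hausdorff) spaces, hence $\varprojlim_i S_i\ne\emptyset$. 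A point of this inverse limit is a compatible family $(\gamma_i)$, which assembles to a morphism $\gamma\colon\bfG\to\varprojlim_i\bfB_i=\bfB$ with $\alpha\circ\gamma=\varphi$, i.e., a solution of the original embedding problem.

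The step I expect to require the most care is verifying that the relevant $S_i$ are genuinely nonempty compact spaces with well-defined continuous transition maps — in particular, that "being a morphism of piles" and "solving the embedding problem" are closed conditions in an appropriate compact space of maps $G\to B/N_i$ (which is fine since both $G$ and $B/N_i$ have profinite, resp. finite, structure and the space $Y_i$ is finite), and that the rigidity of $\alpha$ is inherited by each $\alpha_i$ so that $(\varphi_i,\alpha_i)$ is a legitimate embedding problem for $\bfG$ in the sense of Definition \ref{proj pile}. Rigidity descent is exactly what Lemma \ref{cartesian rigid} and Remark \ref{rigid equivalent} are set up to give, applied to the cartesian/quotient squares relating $\bfB$, $\bfB_i$, $\bfA$, $\bfA_i$; once that is in hand, projectivity of $\bfG$ applies to each level and the inverse-limit compactness argument closes the proof.
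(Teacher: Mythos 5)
There is a genuine gap, and it sits exactly where you flagged it. Your level-$i$ problems $(\varphi_i,\alpha_i)$ are not legitimate embedding problems for $\bfG$ in the sense of Definition~\ref{proj pile}, because rigidity does not pass to finite quotients of $\bfB$ and $\bfA$. Rigidity of $\alpha$ only gives $\Ker(\alpha)\cap B_y=1$; after dividing by an open $N_i$ and collapsing $Y$ to a finite $Y_i$, the point stabilizers in $(B/N_i,Y_i)$ are in general strictly larger than images of the $B_y$, and the kernel of $\alpha_i$ (the image of $\Ker(\alpha)N_i$) can meet them nontrivially, so $\alpha_i$ need not be injective on stabilizers (nor need the orbit-space condition survive). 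Moreover, if some $B_y$ is infinite, then \emph{no} open $N_i$ satisfies $N_i\cap B_y=1$, so the arrangement you propose in parentheses cannot be carried out. The lemmas you cite do not repair this: Remark~\ref{rigid equivalent} requires precisely the trivial intersection you cannot arrange, Lemma~\ref{cartesian rigid} transfers rigidity \emph{up} from a finite bottom map to $\bfBhat$, not down to a finite quotient, and Lemma~\ref{complete cartesian} — the one result that does manufacture a finite rigid approximation in a cartesian square — needs the kernel of $\alphahat$ to be \emph{finite} (that is what allows an open $N$ with $K\cap N=1$ and a partition with trivial $K$-stabilizers via Lemma~\ref{G-partition}). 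Without rigid $\alpha_i$ you cannot invoke projectivity of $\bfG$ at level $i$, so the solutions $\gamma_i$ your compactness argument feeds on need not exist.

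The compactness step itself is also unjustified as stated: when $G$ is not topologically finitely generated (or $T$ is infinite), the set of continuous homomorphisms $G\to B_i$ (and of continuous maps $T\to Y_i$) into a finite target is infinite and is not closed under pointwise limits — a pointwise limit of continuous homomorphisms to a finite group need not be continuous — so the $S_i$ are not compact spaces in any topology you have provided, and an inverse limit of nonempty sets over a directed poset can be empty. The paper's route avoids both problems: it first treats the case of finite kernel, where Lemma~\ref{complete cartesian} produces a cartesian square over a finite rigid embedding problem, projectivity solves the finite problem, and the universal property of the fibred product lifts the solution uniquely to $\bfBhat$; the general kernel is then handled not by approximating $\bfB$ by arbitrary finite quotients but by a Zorn-type reduction of the kernel through finite-kernel steps, quoted verbatim from Part II of the proof of \cite[Lemma 7.3]{HJ-real}. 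If you want to keep the spirit of your plan, the induction must run along closed normal subgroups $N\le\Ker(\alpha)$ of $B$ (for which $N\cap B_y=1$ is automatic), which is exactly that argument, rather than along arbitrary open normal subgroups $N_i$ of $B$.
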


\begin{proof}
Let
$(\varphihat \colon \bfG \to \bfAhat,\ \alphahat \colon \bfBhat \to \bfAhat)$
be an embedding problem for $\bfG = (G,T)$.
We may assume that $\alphahat$ is the quotient map
$\bfBhat \to \bfBhat/K$
for some $K \normal G$.

If $K$ is finite, then by Lemma~\ref{complete cartesian}
we have a commutative diagram
\begin{equation}\label{EP with cartesian diagram}
\xymatrix{
& \bfG \ar[d]_\varphihat \ar@/^15pt/[dd]^\varphi
\\
\bfBhat \ar[r]^\alphahat \ar[d]^p
& \bfAhat \ar[d]_{\varphi_0}
\\
\bfB \ar[r]^\alpha & \bfA
}
\end{equation}
with a cartesian square,
in which $\alpha$ is a rigid epimorphism of finite piles.
By assumption there is a morphism
$\gamma \colon \bfG \to \bfB$
such that
$\alpha \circ \gamma = \varphi = \varphi_0 \circ \varphihat$.
By the universal property of fibred products
there is a unique morphism
$\gammahat \colon \bfG \to \bfBhat$
such that
$\alphahat \circ \gammahat = \varphihat$
(and $p \circ \gammahat = \gamma$).
Thus $\gammahat$ is a solution of the given embedding problem.

The general case is verbally identical with
Part II in the proof of \cite[Lemma 7.3]{HJ-real}.
\end{proof}

\begin{lem}\label{completion of solution}
Let 
$\varphi\colon\bfG\to\bfA$ and $\alpha\colon\bfB\to\bfA$
be morphisms of piles,
$\bfB=(B,Y)$ and $\bfA=(A,X)$ finite.
Let $\psi \colon G \to B$ be a group homomorphism 
such that $\alpha \circ \psi = \varphi$.
Then $\psi$ can be completed to 
a morphism
$\psi \colon \bfG\to \bfB$
such that $\alpha \circ \psi = \varphi$
if and only if
for every $t \in T$ there is $y \in Y$
such that
\begin{equation}\label{basic}
\alpha(y) = \varphi(t)
\text{ and }
\psi(G_t) \le B_y.
\end{equation}
\end{lem}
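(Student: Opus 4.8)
\smallskip
\noindent\emph{Proof plan.}
I would prove the two implications separately. The ``only if'' direction is immediate and uses no finiteness: if $\psi\colon\bfG\to\bfB$ is a morphism of piles with $\alpha\circ\psi=\varphi$, then for each $t\in T$ the general property of morphisms recorded in Definition~\ref{pile} gives $\psi(G_t)\le B_{\psi(t)}$, so $y:=\psi(t)$ satisfies $\alpha(y)=\alpha(\psi(t))=\varphi(t)$ and $\psi(G_t)\le B_y$.

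For ``if'', the point is that, $Y$ being finite, completing $\psi$ amounts to constructing a continuous $G$-equivariant map $T\to Y$ lying over $\varphi\colon T\to X$ and satisfying $\psi(t^g)=\psi(t)^{\psi(g)}$. The plan is to factor this map through a sufficiently fine \emph{finite} $G$-space quotient $q\colon T\to\Tbar$; recall $T$ is the inverse limit of its finite $G$-space quotients $T_j$ by Lemma~\ref{Stab}(b). I will choose $\Tbar$ so that $\varphi$ factors as $\bar\varphi\circ q$ through $\Tbar$ — possible because a continuous map to a finite discrete space factors through some $T_j$, cf.\ \cite[Lemma 1.1.16(b)]{HJ} — and so that for every $\bar t\in\Tbar$ there is $y\in Y$ with $\alpha(y)=\bar\varphi(\bar t)$ and $\psi(\Stab_G(\bar t))\le B_y$. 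Granting such a $\Tbar$, I pick representatives $\bar t_1,\dots,\bar t_m$ of the finitely many $G$-orbits in $\Tbar$, choose for each $k$ a witness $y_k$ as above, and define $\bar\psi(\bar t_k^{\,g}):=y_k^{\,\psi(g)}$. The inclusion $\psi(\Stab_G(\bar t_k))\le B_{y_k}$ is exactly what makes $\bar\psi$ well defined; it is then automatically continuous and $G$-equivariant, and $\alpha\circ\bar\psi=\bar\varphi$ follows from $\alpha(y_k^{\psi(g)})=\alpha(y_k)^{\varphi(g)}=\bar\varphi(\bar t_k)^{\varphi(g)}$. Then $\psi:=\bar\psi\circ q$, together with the given group homomorphism, is the sought morphism $\bfG\to\bfB$, and $\alpha\circ\psi=\bar\varphi\circ q=\varphi$.

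The work is therefore in producing $\Tbar$, and I expect this to be the only real difficulty. Pointwise it is easy: for fixed $t$, writing $t_j$ for the image of $t$ in $T_j$, the open subgroups $\Stab_G(t_j)$ form a downward directed family with $\bigcap_j\Stab_G(t_j)=G_t$ (a point of the inverse limit is determined by its images), and since $\psi$ is continuous with compact domain it is a closed map, so $\bigcap_j\psi(\Stab_G(t_j))=\psi(G_t)$; by hypothesis $\psi(G_t)\le B_{y(t)}$ for some $y(t)$ with $\alpha(y(t))=\varphi(t)$, and since $B$ is finite the intersection is attained at some level $j(t)$, which after enlarging we may also assume carries a factorization of $\varphi$. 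Then the clopen fiber $W_t:=q_{j(t)}^{-1}(t_{j(t)})$ is a neighbourhood of $t$ on which the witness $y(t)$ remains valid. Now compactness of $T$ yields finitely many $W_{t_1},\dots,W_{t_k}$ covering $T$; I take $\Tbar=T_{j^*}$ for an index $j^*$ above $j(t_1),\dots,j(t_k)$ (and above an index through which $\varphi$ factors). For $\bar t\in\Tbar$, picking $t$ with $q(t)=\bar t$ and $i$ with $t\in W_{t_i}$, the $G$-equivariant projection $T_{j^*}\to T_{j(t_i)}$ sends $\bar t$ to $(t_i)_{j(t_i)}$, so — since stabilizers can only grow under a $G$-map — $\psi(\Stab_G(\bar t))\le\psi(\Stab_G((t_i)_{j(t_i)}))=\psi(G_{t_i})\le B_{y(t_i)}$, while $\bar\varphi(\bar t)=\varphi(t_i)=\alpha(y(t_i))$; this gives the required property. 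In short, the hypothesis supplies a good $y$ at each point, and the hard part will be upgrading this to a single finite quotient valid uniformly, which is achieved by compactness together with the monotonicity of point stabilizers under $G$-equivariant quotients.
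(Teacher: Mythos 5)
Your proof is correct and takes essentially the same route as the paper: the paper also uses compactness (via Lemma \ref{Stab}) to get a finite $G$-partition of $T$ on each block of which a single witness $y_i$ works with $\varphi$ constant and $\psi(\Stab_G(T_i))\le B_{y_i}$, and then makes the assignment equivariant by replacing witnesses with conjugates $y_i^{\psi(g)}$, checking well-definedness exactly as you do. Your passage through a finite $G$-quotient $\Tbar$ is just the partition formulation in different clothing, so there is nothing substantively different to flag.
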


\begin{proof}
If $\psi$ extends to $\bfG \to \bfA$,
then \eqref{basic} holds with $y = \psi(t)$.

Conversely, assume that the condition holds.
Let $t \in T$ and fix $y \in Y$ such that satisfies \eqref{basic}.
As $\varphi \colon T \to X$ is continuous,
there is a clopen neighborhood $U_t$ of $t$ in $T$ such that
$\varphi(U_t) = \{\alpha(y)\}$.
As $\psi(G_t) \le B_y$,
by Lemma~\ref{Stab}(c),(f) we may assume that
$\psi(\Stab_G(U_t)) \le B_y$.

As $T$ is compact,
$\{U_t\}_{t \in T}$
has a finite subcovering.
Therefore there is a partition $T = \bigdotcup_{i=1}^n T_i$
and there are $y_1,\ldots, y_n \in Y$
such that
$\varphi(T_i) = \{\alpha(y_i)\}$
and
$\psi(\Stab_G(T_i)) \le B_{y_i}$,
for every $i$.

If we now define $\psi \colon T \to Y$
by mapping $T_i$ onto $y_i$,
then $\psi$ is continuous and
$\alpha\circ\psi=\varphi$ on $T$.

By Lemma~\ref{Stab}(a)
we may assume that $\{T_i\}_{i=1}^n$ is a $G$-partition.
Without loss of generality,
if $T_j = T_i^g$, with $g \in G$,
then $y_j = y_i^{\psi(g)}$.
Indeed, we fix a representative $T_i$ of a $G$-orbit in 
$\{T_i\}_{i=1}^n$ 
and for $T_j = T_i^g$ redefine $y_j$ to be $y_j = y_i^{\psi(g)}$.
This definition is good:
If $T_i^g = T_i^h$,
then $hg^{-1} \in \Stab_G(T_i)$,
hence
$\psi(h) \psi(g)^{-1} = \psi(hg^{-1}) \in B_{y_i} = \Stab_B(y_i)$,
whence
$y_i^{\psi(g)} = y_i^{\psi(h)}$.

It then follows that
$\psi(t^g) = \psi(t)^{\psi(g)}$
for every $g \in G$.
%
\end{proof}

\begin{cor}\label{completion of homomorphism}
Let $\bfG=(G,T)$ be a pile,
$\bfB=(B,Y)$ a finite pile,
and $\psi \colon G \to B$ a group homomorphism.
Assume that
for every $t \in T$ there is $y \in Y$
such that
$\psi(G_t) \le B_y$.
Then $\psi$ can be completed to a morphism
$\psi \colon \bfG\to \bfB$.

Moreover,
let $\mathcal{P}$ be a partition of $T$,
and put
$\B = \{B_y \suchthat y \in Y\}$.
Then there is $N \in \N$ such that
if
\begin{equation}\label{Y large}
|\{y \in Y \suchthat B_y = B'\}| \ge N
\text{ for all }
B' \in \B,
\end{equation}
then $\psi$ can be completed so that
the partition $\{\psi^{-1}(\{y\}) \suchthat y \in Y\}$
is finer than $\mathcal{P}$.
\end{cor}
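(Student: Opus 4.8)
The first assertion is the special case $\bfA=(\{1\},\{*\})$ of Lemma~\ref{completion of solution}: the morphisms $\varphi\colon\bfG\to\bfA$ and $\alpha\colon\bfB\to\bfA$ into the one-point pile are then forced, $\alpha\circ\psi=\varphi$ holds automatically, and condition~\eqref{basic} of that lemma becomes precisely the hypothesis that each $\psi(G_t)$ lies in some $B_y$; so Lemma~\ref{completion of solution} delivers a completion $\psi\colon\bfG\to\bfB$.

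For the ``moreover'' part the plan is to apply Lemma~\ref{completion of solution} again, this time to an auxiliary finite pile $\bfA=(A,X)$ that encodes $\mathcal P$. First I would refine $\mathcal P$ to a $G$-partition (Lemma~\ref{Stab}(a)); refining $\mathcal P$ only strengthens the desired conclusion. The induced map $q\colon T\to\mathcal P$ is $G$-equivariant for the action of $G$ on this finite set, which factors through an open normal subgroup. I would take $A:=B$, so that $\varphi$ and $\alpha$ are to have group parts $\psi$ and $\id_B$, and seek a finite $B$-set $X$ with a $B$-equivariant surjection $\alpha\colon Y\to X$ and a map $\varphi\colon T\to X$, equivariant through $\psi$, whose fibres lie inside cells of $\mathcal P$ and which satisfies $\psi(G_t)\le B_{\varphi(t)}$ for every $t$. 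Given these data, Lemma~\ref{completion of solution} produces a completion $\psi\colon\bfG\to\bfB$ with $\alpha\circ\psi=\varphi$, and then each $\psi^{-1}(y)$ is contained in $\varphi^{-1}(\alpha(y))$, hence in a single cell of $\mathcal P$ --- exactly what is required.

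The hypothesis~\eqref{Y large} is what makes $\alpha$ meet condition~\eqref{basic} of Lemma~\ref{completion of solution}: for each $t\in T$ we must produce $y\in Y$ with $\alpha(y)=\varphi(t)$ and $\psi(G_t)\le B_y$; since $\psi(G_t)$ is contained in some $B'\in\B$ and~\eqref{Y large} guarantees at least $N$ elements $y$ with $B_y=B'$, it suffices to distribute these, $B$-orbit by $B$-orbit, over the finitely many cells of $\mathcal P$, so that $N$ exceeding $|B|$ times the number of cells of $\mathcal P$ is enough. The step I expect to be the crux is constructing $X$, $\alpha$ and $\varphi$ with the required equivariances simultaneously --- upgrading the $G$-action on $\mathcal P$ to a $B$-action on $X$ compatible with both maps. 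In the case relevant to the applications, where after the refinement $G$ fixes each cell of $\mathcal P$, this is transparent: take $X=\mathcal P$ with trivial $B$-action, $\varphi=q$, and $\alpha$ any $B$-invariant surjection $Y\to\mathcal P$ that hits every cell from copies of every $B'\in\B$. In general one replaces $X$ by an appropriate induced $B$-set; in either case checking that $\varphi$ and $\alpha$ are genuine morphisms of piles (the stabiliser inclusions $\psi(G_t)\le B_{\varphi(t)}$ and $B_y\le B_{\alpha(y)}$) is routine.
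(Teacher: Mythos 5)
The first assertion of your proposal is exactly the paper's argument: specialize Lemma~\ref{completion of solution} to the trivial pile $\bfA=(1,\{*\})$, where \eqref{basic} reduces to the stated hypothesis. Nothing to add there.

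For the ``moreover'' part you take a genuinely different route, and it has a gap precisely at the step you yourself call the crux. The paper does not apply Lemma~\ref{completion of solution} as a black box to an auxiliary target; it re-enters the \emph{proof} of that lemma, replaces the partition $\{T_i\}$ constructed there by a $G$-partition finer than $\mathcal{P}$, and spends hypothesis~\eqref{Y large} on choosing the elements $y_i$ attached to representatives of the $G$-orbits of blocks from pairwise distinct $B$-orbits of $Y$, so that the fibres of the completed map are the blocks $T_i$ themselves. Your plan instead needs a finite pile $(B,X)$, a map $\varphi\colon T\to X$ equivariant through $\psi$ whose fibres refine $\mathcal{P}$, and a $B$-equivariant $\alpha\colon Y\to X$ satisfying \eqref{basic}; you then dismiss the construction ``in general'' with the phrase ``an appropriate induced $B$-set''. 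This is not routine, and as stated it cannot work for an arbitrary $\mathcal{P}$: any map $T\to X$ that is equivariant through $\psi$ is constant on $\Ker\psi$-orbits (indeed its fibre over $\varphi(t)$ is invariant under $\psi^{-1}(\Stab_B(\varphi(t)))$), so its fibres can refine $\mathcal{P}$ only if $\mathcal{P}$ is compatible with the $\Ker\psi$-action; passing to an induced $B$-set changes the $B$-action on $X$ but not this constraint. (The same equivariance constraint governs the completed morphism itself, which is why in the paper's applications $\mathcal{P}$ is always a partition induced by a map through which $\psi$ factors; your ``transparent'' special case is the relevant one, but your general claim is unsupported.) In addition, even when a suitable $\varphi$ exists, \eqref{basic} demands a $y$ with $\psi(G_t)\le B_y$ \emph{and} $\alpha(y)=\varphi(t)$, and $B$-equivariance of $\alpha$ forces $B_y\le\Stab_B(\varphi(t))$; hypothesis~\eqref{Y large} supplies many $y$ with $B_y=B'$ for each $B'\in\B$, but gives no $B'$ with $\psi(G_t)\le B'\le\Stab_B(\varphi(t))$, so your counting argument ($N>|B|$ times the number of cells) does not by itself verify \eqref{basic}. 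To repair the ``moreover'' part you should argue as the paper does: refine $\{T_i\}$ below $\mathcal{P}$ inside the proof of Lemma~\ref{completion of solution} (using Lemma~\ref{Stab}(a)) and use \eqref{Y large} to place the chosen $y_i$ in distinct $B$-orbits, so that distinct blocks receive distinct images and the fibre partition is exactly $\{T_i\}$.
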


\begin{proof}
Let $\bfA = (1, \{*\})$ be the trivial pile
and let 
$\varphi\colon\bfG\to\bfA$ and $\alpha\colon\bfB\to\bfA$
be the unique morphisms of piles.
Then the group homomorphisms satisfy
$\alpha \circ \psi = \varphi$,
and
$\alpha(y) = * = \varphi(t)$
for all $t \in T$ and $y \in Y$.
By Lemma~\ref{completion of solution},
$\psi$ can be completed to a morphism of piles.

Moreover, given a partition $\mathcal{P}$ of $T$,
we may replace $\{T_i\}_{i=1}^n$,
in the proof of Lemma~\ref{completion of solution}
for this particular case,
by a partition finer than $\mathcal{P}$.
Then, if \eqref{Y large} holds for a sufficiently large $N$,
we can choose $y_1,\ldots, y_n$ above
from distinct $B$-orbits in $Y$.
Then
$y_1,\ldots, y_n$ remain distinct even after we replace them by their conjugates,
and hence
$\{\psi^{-1}(\{y\}) \suchthat y \in Y\}
=
\{\psi^{-1}(\{y_i\})\}_{i=1}^n = 
\{T_i\}_{i=1}^n$.
\end{proof}

\begin{prop}\label{pile vs group projectivity}
A pile $\bfG = (G,T)$ is projective
if and only if
\begin{itemize}
\item[(a)]
$G$ is $\{G_t \suchthat t \in T\}$-projective;
and
\item[(b)]
if $t, t' \in T$ are distinct, then
$G_t \cap G_{t'} = 1$.
\end{itemize}
\end{prop}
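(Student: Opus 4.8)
The plan is to prove the two implications separately, translating between the pile-theoretic embedding problems of Definition \ref{proj pile} and the group-theoretic ones of the preceding section. First I would observe that condition (b) is essentially forced by rigidity: in any finite embedding problem $(\varphi\colon\bfG\to\bfA,\ \alpha\colon\bfB\to\bfA)$ for $\bfG$, the map $\alpha$ sends each $B_y$ isomorphically onto $A_{\alpha(y)}$, and a solution $\gamma\colon\bfG\to\bfB$ must satisfy $\gamma(G_t)\le B_{\gamma(t)}$; if two distinct points $t,t'$ had $G_t\cap G_{t'}\ne 1$ one can cook up a finite rigid quotient that separates their images in $X$ while forcing the intersection to die, contradicting solvability. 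Concretely, I would invoke Lemma \ref{decomposition} (with $\bfA$ trivial, say) to manufacture a finite pile $\bfAhat$ in which the images of $t,t'$ are distinct and $\Ahat_{\xhat_1}\cap\Ahat_{\xhat_2}\le\Ker\varphi_0$ — but since here everything maps to the trivial pile, this already shows $\varphihat(G_t)\cap\varphihat(G_{t'})=1$ in $\bfAhat$; as this works for arbitrarily fine quotients and $\bigcap N = 1$, we get $G_t\cap G_{t'}=1$. That gives (b). For (a), given a finite embedding problem $(\varphi\colon(G,\G)\to(A,\A),\ \alpha\colon(B,\B)\to(A,\A))$ of \emph{pairs} with $\alpha$ rigid, I would build finite piles realizing it: index $\A$ and $\B$ by the finite sets of (conjugates of) the relevant subgroups, use Construction \ref{standard} to attach standard extensions, apply projectivity of $\bfG$ to get a pile morphism $\gamma\colon\bfG\to\bfB$, and read off the underlying group homomorphism as a solution of the original embedding problem of pairs.

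For the converse — assuming (a) and (b), show $\bfG$ is projective — I would take a finite embedding problem $(\varphi\colon\bfG\to\bfA,\ \alpha\colon\bfB\to\bfA)$ with $\alpha$ a rigid epimorphism of finite piles. The underlying morphism $\varphi\colon G\to A$ together with $\alpha\colon B\to A$ forms an embedding problem of pairs (rigidity of $\alpha$ on piles gives rigidity of $\alpha$ on the associated family $\B$), so by (a) there is a group homomorphism $\psi\colon G\to B$ with $\alpha\circ\psi=\varphi$. The task is then to upgrade $\psi$ to a morphism of piles, i.e.\ to produce a compatible continuous $G$-equivariant map $T\to Y$, and this is exactly what Lemma \ref{completion of solution} is designed for: I need to verify its hypothesis \eqref{basic}, namely that for every $t\in T$ there is $y\in Y$ with $\alpha(y)=\varphi(t)$ and $\psi(G_t)\le B_y$. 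Here is where condition (b) and a refinement of the solution $\psi$ enter: I expect one must not take an arbitrary $\psi$ but rather one adapted to the combinatorics of the fibres of $\alpha\colon Y\to X$.

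The main obstacle, and the heart of the argument, is precisely establishing \eqref{basic} — it is not automatic that the group-theoretic solution $\psi$ respects the point-stabilizer structure finely enough. I anticipate the following remedy: enlarge the pile $\bfB$ before solving. Replace $\bfB$ by $\bfB\times_{\bfA}\bfAhat$ for a suitable auxiliary finite rigid epimorphism $\bfAhat\to\bfA$ obtained from Lemma \ref{decomposition} (using (b) — this is exactly the place where $G_{t_1}\cap G_{t_2}=1$ is used), so that in the enlarged problem distinct points of $X$ have $\bfAhat$-stabilizers intersecting inside the kernel; Lemma \ref{cartesian rigid} guarantees the enlarged $\alpha$ is still a rigid epimorphism of finite piles. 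One also wants the fibres over each $x$ to be large (many $y$'s with the same $B_y$), which one arranges by a further harmless enlargement so that Corollary \ref{completion of homomorphism} applies. Then for the enlarged problem, (a) supplies $\psi$, condition \eqref{basic} can be checked pointwise — given $t$, the constraint $\psi(G_t)\le B_y$ for some $y$ over $\varphi(t)$ holds because the potential obstructions coming from other points have been pushed into the kernel by the cartesian enlargement — and Lemma \ref{completion of solution} (or Corollary \ref{completion of homomorphism}) completes $\psi$ to a pile morphism, which is the desired solution. Finally one pushes this solution of the enlarged problem down to a solution of the original one via the projection $\bfB\times_{\bfA}\bfAhat\to\bfB$. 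I would structure the writeup as: (1) (b) from rigidity; (2) embedding problems of piles restrict to embedding problems of pairs, giving one direction and the "easy half" of the other; (3) the cartesian enlargement trick plus Lemma \ref{completion of solution} for the hard half.
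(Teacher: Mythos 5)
Your treatment of the converse ((a)+(b) $\Rightarrow$ projective) and of part (a) of the forward direction is essentially the paper's own route: standard extensions via Construction \ref{standard} plus Corollary \ref{completion of homomorphism} to turn an embedding problem of pairs into one of piles, and, for the hard half, Lemma \ref{decomposition} applied with hypothesis (b), the cartesian square $\bfBhat=\bfB\times_{\bfA}\bfAhat$, Lemma \ref{cartesian rigid}, a solution from $\G$-projectivity, and Lemma \ref{completion of solution}. Two points of looseness there: $\varphi_0\colon\bfAhat\to\bfA$ from Lemma \ref{decomposition} is only a morphism, not a rigid epimorphism, and you should complete the solution at the level of $\bfB$ (i.e.\ first pass to $\psi=p\circ\psihat$ and verify \eqref{basic} for $\alpha,\varphi$), since \eqref{basic} at the level of $\bfBhat$ need not hold; in particular the case $\varphi(G_t)=1$, where one must pick a different $y'$ over $\varphi(t)$ using injectivity of $\alpha$ on $B_y$, is missing from your sketch but is needed.

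The genuine gap is your proof of (b) in the forward direction: it is circular and, even setting that aside, vacuous. Lemma \ref{decomposition} has as an explicit hypothesis exactly condition (b), namely $G_{t_1}\cap G_{t_2}=1$ for all distinct $t_1,t_2\in T$, so it cannot be invoked to derive (b) from projectivity of the pile. Moreover, taking $\bfA$ to be the trivial pile makes its conclusion empty: $X$ is a single point, so there are no $\xhat_1,\xhat_2$ with $\varphi_0(\xhat_1)\neq\varphi_0(\xhat_2)$, and nothing whatsoever forces $\varphihat(G_t)\cap\varphihat(G_{t'})$ to be trivial. What is actually needed (and what the paper does) is to use projectivity of $\bfG$ against a carefully chosen target in which distinct points automatically have trivially intersecting stabilizers: fix an open normal $N$, use Lemma \ref{decomposition 0}(a) to get a finite quotient pile $\bfA=(A,X)$ with $\Ker\varphi\le N$ and $\varphi(t)\neq\varphi(t')$, then set $B=A\amalg(\coprod_{y\in Y_0}A_y)$ with $Y$ the standard $B$-extension of a set $Y_0$ of representatives of the $A$-orbits of $X$, so that $B_{y}\cap B_{y'}=1$ for distinct $y,y'$ by \cite[Lemma 3.1.10]{HJ}. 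Since this $\bfB$ is infinite, one needs Proposition \ref{general EP} (not just finite projectivity) to produce $\psi\colon\bfG\to\bfB$ with $\alpha\circ\psi=\varphi$; then $\psi(t)\neq\psi(t')$ gives $\varphi(G_t\cap G_{t'})\le\alpha(B_{\psi(t)}\cap B_{\psi(t')})=1$, hence $G_t\cap G_{t'}\le\Ker\varphi\le N$, and intersecting over all $N$ yields (b). Without an argument of this kind your "cook up a finite rigid quotient forcing the intersection to die" has no justification.
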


\begin{proof}
Assume that $\bfG$ is projective.
Put $\G = \{G_t \suchthat t \in T\}$.

(a)
Consider a finite rigid embedding problem 
\begin{equation*}\tag{\ref{EP pairs}}
\big(\varphi\colon (G,\G)\to (A,\A),\ \alpha\colon (B,\B)\to (A,\A)\big)
\end{equation*}
for $(G, \G)$.

Write $\B$ as $\{B_y \suchthat y \in Y_0\}$
and for every $y \in Y_0$ let $A_y = \alpha(B_y)$.
Then $\A = \{A_y \suchthat y \in Y_0\}$, 
and $\A = \A^A$, $\B = \B^B$.
Let $Y$ be the standard $B$-extension of $Y_0$ with respect to $\B$,
and let $X$ be the standard $A$-extension of $Y_0$ with respect to $\A$
(Construction \ref{standard}).
Then $\bfB = (B,Y)$ and $\bfA = (A,X)$ are piles
and the identity map $Y_0 \to Y_0$ extends $\alpha$ to a rigid epimorphism $\alpha \colon \bfB \to \bfA$.

As $\varphi$ is a morphism of pairs,
for every $t \in T$ there is $x \in X$
such that $\varphi(G_t) \le A_x$.
Hence by Corollary~\ref{completion of homomorphism},
$\varphi$ can be completed to a morphism
$\varphi \colon \bfG \to \bfA$.

As $\bfG$ is projective, there is a morphism
$\psi \colon \bfG \to \bfB$ such that ${\alpha \circ \psi} = \varphi$.
In particular, the group homomorphism
$\psi \colon G \to B$ satisfies $\alpha \circ \psi = \varphi$
and $\psi(G_t) \le B_{\psi(t)}$, for every $t \in T$,
that is,
$\psi(\G) \subseteq \B'$.
Thus $\psi$ solves \eqref{EP pairs}.

(b)
Let $t, t' \in T$ be distinct.
It suffices to show that
$G_t \cap G_{t'}$ is contained in every open subgroup $N$ of $G$.

By Lemma~\ref{decomposition 0}(a)
there is an epimorphism
$\varphi \colon \bfG \to \bfA$
onto a finite pile $\bfA = (A,X)$
such that
$\Ker(\varphi) \le N$
and $\varphi(t), \varphi(t') \in X$ are distinct.

Let $Y_0$ be a set of representatives of the $A$-orbits of $X$.
Let $B = A \amalg(\amalg_{y \in Y_0} A_y)$
and let $\alpha \colon B \to A$ be the epimorphism
that maps $A, A_y$ identically on the corresponding subgroups of $A$.
Put $\B = \{A_y^b \suchthat y \in Y_0,\, b \in B\}$.
Let $Y$ be the standard $B$-extension of $Y_0$
with respect to $\B$,
so that $\bfB = (B,Y)$ is a pile.
By Construction~\ref{standard},
$\alpha$ together with the identity of $Y_0$
extend to a morphism $\alpha \colon \bfB \to \bfA$.
It is easy to see that $\alpha$ is a rigid epimorphism.

By Proposition~\ref{general EP}
there is a morphism $\psi \colon \bfG \to \bfB$
such that $\alpha \circ \psi = \varphi$.
Then $\psi(t) \ne \psi(t')$,
because $\alpha \circ \psi(t) = \varphi(t) \ne \varphi(t') = \alpha \circ \psi(t')$.
By \cite[Lemma 3.1.10]{HJ}, 
$B_{\psi(t)} \cap B_{\psi(t')} = 1$.
Thus
$\varphi(G_t \cap G_{t'}) =
\alpha(\psi(G_t \cap G_{t'})) \le
\alpha ( \psi(G_t) \cap \psi(G_{t'}) ) \le
\alpha(B_{\psi(t)} \cap B_{\psi(t')})
= 1
$,
whence
$G_t \cap G_{t'} \le \Ker(\varphi) \le N$.

Conversely, assume that (a) and (b) hold.
Let \eqref{EP} be a finite embedding problem for $\bfG$,
with $\bfA = (A,X)$ and $\bfB = (B,Y)$.

Let $\bfAhat = (\Ahat,\Xhat)$
and $\varphihat, \varphi_0$ be as in Lemma~\ref{decomposition}.
Then there is a commutative diagram
with a cartesian square
\begin{equation*}\tag{\ref{EP with cartesian diagram}}
\xymatrix{
& \bfG \ar[d]_\varphihat \ar@/^15pt/[dd]^\varphi
\\
\bfBhat \ar[r]^\alphahat \ar[d]^p
& \bfAhat \ar[d]_{\varphi_0}
\\
\bfB \ar[r]^\alpha & \bfA
}
\end{equation*}
By Lemma~\ref{cartesian rigid},
$(\varphihat, \alphahat)$
is also a finite embedding problem for $\bfG$.

As $G$ is $\G$-projective,
there is a group homomorphism $\psihat \colon G \to \Bhat$
such that $\alphahat \circ \psihat = \varphihat$
and for every $t \in T$ there is
$\yhat \in \Yhat$
such that
\begin{equation}\label{hatcontainment}
\psihat(G_t) \le \Bhat_{\yhat}.
\end{equation}
Put $\psi = p \circ \psihat$
and $y = p(\yhat) \in Y$.
Then
$\alpha \circ \psi = \varphi$
and $\psi(G_t) \le B_y$.

If $\varphi(G_t) = 1$,
use $\alpha(Y) = X$ to choose $y' \in Y$ such that
$\alpha(y) = \varphi(y')$.
We have $\alpha(\psi(G_t)) = \varphi(G_t) = 1$,
and $\alpha$ is injective on $B_y$, hence also on $\psi(G_t)$,
so $\psi(G_t) = 1 \le B_{y'}$.
Thus condition \eqref{basic} of Lemma~\ref{completion of solution} holds
with $y'$ instead of $y$.

If $\varphi(G_t) \ne 1$, then
$\varphihat(G_t) \not\le \Ker(\varphi_0)$.
But, by \eqref{hatcontainment},
$$
\varphihat(G_t) = \alphahat \circ \psihat (G_t) \le
\alphahat(\Bhat_{\yhat}) = A_{\alphahat(\yhat)}
$$
and $\varphihat(G_t) \le \Ahat_{\varphihat(t)}$, hence
$\Ahat_{\alphahat(\yhat)} \cap \Ahat_{\varphihat(t)}
\not \le \Ker(\varphi_0)$.
By Lemma \ref{decomposition},
$\varphi_0(\alphahat(\yhat)) = \varphi_0(\varphihat(t))$,
that is,
$\alpha(y) = \varphi(t)$.
Thus condition \eqref{basic} of Lemma~\ref{completion of solution} holds.

By Lemma~\ref{completion of solution},
$\psi$ can be completed to a solution
of \eqref{EP}.
\end{proof}

\begin{rem}\label{pile vs group projectivity b}
If $G$ is a $\G$-projective group,
then $\Gamma \cap \Gamma' = 1$
for all distinct $\Gamma, \Gamma' \in \G$
(\cite[Proposition~5.5.3(b)]{HJ}).
Hence we can replace (b)
in Proposition~\ref{pile vs group projectivity}
by
\begin{itemize}
\item[(b')]
the map $t \mapsto G_t$
is injective on $T' = \{t \in T \suchthat G_t \ne 1\}$.
\end{itemize}
\end{rem}

\begin{lem}\label{mod tilde}
Let $\bfG = (G,T)$ be a projective pile.
Let $N$ be a normal subgroup of $G$ and
$\tN=\langle N\cap G_t \suchthat t\in T\rangle$.
Then the quotient pile $\bfG/\tN := (G/\tN, T/\tN)$
is projective.
\end{lem}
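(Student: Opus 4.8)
The plan is to solve every finite embedding problem for $\bfG/\tN$ by pulling it back to $\bfG$, solving it there, and observing that the solution descends automatically. First I would record the routine setup. Since $\tN$ is a closed normal subgroup of $G$, the pair $\bfG/\tN = (G/\tN, T/\tN)$ is a pile (as already used for $(G/N,T/N)$ in the proof of Lemma~\ref{decomposition 0}), and the quotient maps $G \to G/\tN$, $T \to T/\tN$ constitute a morphism of piles $q \colon \bfG \to \bfG/\tN$. A short computation with the right action gives, for every $t \in T$, that $\Stab_{G/\tN}(t^{\tN}) = G_t\tN/\tN = q(G_t)$, so $q$ is in fact an epimorphism of piles; in particular $q$ is surjective on both groups and spaces.

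Now take a finite embedding problem $(\varphi \colon \bfG/\tN \to \bfA,\ \alpha \colon \bfB \to \bfA)$ for $\bfG/\tN$, with $\bfB = (B,Y)$. Composing $\varphi$ with $q$ yields a finite embedding problem $(\varphi \circ q \colon \bfG \to \bfA,\ \alpha \colon \bfB \to \bfA)$ for $\bfG$, which, since $\bfG$ is projective, has a solution $\gamma \colon \bfG \to \bfB$ with $\alpha \circ \gamma = \varphi \circ q$.

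The crux is that $\gamma$ automatically kills $\tN$, with no need to choose $\gamma$ cleverly. Fix $t \in T$ and $n \in N \cap G_t$. Because $\gamma$ is a morphism of piles, $\gamma(n) \in \gamma(G_t) \le B_{\gamma(t)}$. Because $n \in N \cap G_t \subseteq \tN$, we have $q(n) = 1$, hence $\alpha(\gamma(n)) = \varphi(q(n)) = 1$, i.e. $\gamma(n) \in \Ker\alpha$. Since $\alpha$ is a rigid epimorphism, $\alpha$ restricts to an isomorphism on $B_{\gamma(t)}$, so $\Ker\alpha \cap B_{\gamma(t)} = 1$ and therefore $\gamma(n) = 1$. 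Thus $\gamma$ vanishes on $\bigcup_{t\in T}(N \cap G_t)$, and being a continuous homomorphism into a finite group it vanishes on the closed subgroup $\tN$ generated by this set.

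Consequently the group homomorphism $\gamma \colon G \to B$ factors through $q$, and the continuous map $\gamma \colon T \to Y$ does too: for $t \in T$ and $n \in \tN$ we get $\gamma(t^n) = \gamma(t)^{\gamma(n)} = \gamma(t)$. This yields a morphism of piles $\bar\gamma \colon \bfG/\tN \to \bfB$ with $\bar\gamma \circ q = \gamma$, whence $\alpha \circ \bar\gamma \circ q = \alpha \circ \gamma = \varphi \circ q$; cancelling the surjective $q$ on both components gives $\alpha \circ \bar\gamma = \varphi$, so $\bar\gamma$ solves the original embedding problem, and $\bfG/\tN$ is projective. I do not expect a genuine obstacle here: the only substantive point is the rigidity argument of the third paragraph forcing every solution to annihilate each $N \cap G_t$; the verifications that $\bfG/\tN$ is a pile and that the descended $\bar\gamma$ is an honest morphism of piles are routine.
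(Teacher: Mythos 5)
Your proposal is correct and follows essentially the same route as the paper: pull the embedding problem back along the quotient map, solve it using projectivity of $\bfG$, and use rigidity of $\alpha$ together with $\gamma(G_t)\le B_{\gamma(t)}$ to see that every solution annihilates each $N\cap G_t$, hence $\tN$, so it descends to a solution for $\bfG/\tN$. The only difference is that you spell out the routine verifications (that $\bfG/\tN$ is a pile and that the descended map is a morphism of piles) which the paper leaves implicit.
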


\begin{proof}
Let $\pi \colon \bfG \to \bfG/N$ be the quotient map.
We have to solve a finite embedding problem
$(\varphi_N, \alpha)$
in the following diagram
\begin{equation}\label{quotientEP}
\xymatrix{
& \bfG \ar[d]_{\pi}
\\
& \bfG/\tN \ar[d]_{\varphi_N}
\\
\bfB \ar[r]^\alpha
& \bfA \rlap{ $.$}
}
\end{equation}
Then
$(\varphi_N \circ \pi, \alpha)$
is a finite embedding problem for $\bfG$.
Since $\bfG$ is projective,
this embedding problem has a solution $\gamma$.

Note that $N\cap G_t\leq \Ker(\gamma_{|G_t})\leq \Ker(\gamma)$
(as $(\varphi_N \circ \pi)(N \cap G_t) = 1$
and $\alpha_{|\gamma(N \cap G_t)}$ is injective),
for every $t \in T$.
Hence $\gamma(\tN) = 1$,
whence $\gamma$ factors via $\pi$,
and so we obtain a solution of $(\varphi_N, \alpha)$.
\end{proof}

Since projective pro-$p$ groups are free pro-$p$
(\cite[Theorem 7.7.4]{RZ}),
we get:

\begin{cor}\label{projectivity mod tilde}
In the above setting
$N/\tN$ is a projective profinite group.
If $G$ is pro-$p$, then 
$N/\tN$ is free pro-$p$.
\end{cor}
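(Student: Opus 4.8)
The plan is to read off the group-theoretic statement from the pile-theoretic Lemma~\ref{mod tilde}, which says that $\bfG/\tN = (G/\tN,\, T/\tN)$ is a projective pile. Write $\bar G = G/\tN$, $\bar N = N/\tN$, $\bar T = T/\tN$, and let $\bar G_{\bar t}$ denote the stabilizers of the $\bar G$-action on $\bar T$. Since $N \cap G_t \le \tN$ for every $t \in T$, the image of each $G_t$ in $\bar G$ meets $\bar N$ trivially; hence $\bar N$ is a closed normal subgroup of $\bar G$ with $\bar N \cap \bar G_{\bar t} = 1$ for all $\bar t \in \bar T$. As $\bar N$ is a quotient of the subgroup $N$ of $G$, it is pro-$p$ whenever $G$ is, and then projectivity makes it free pro-$p$ by \cite[Theorem 7.7.4]{RZ}. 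So the whole statement reduces to the following claim: \emph{if $(H,S)$ is a projective pile and $M$ is a closed normal subgroup of $H$ with $M \cap H_s = 1$ for every $s \in S$, then $M$ is a projective profinite group.} Applying it to $(H,S) = \bfG/\tN$ and $M = \bar N$ completes the proof.

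To prove the claim I would solve an arbitrary finite embedding problem $(\alpha \colon B \to A,\ \beta \colon M \to A)$ for the group $M$, exploiting the normality of $M$ in $H$ to induce it up to a finite embedding problem for the projective pile $(H,S)$. Concretely: let $M_0$ be the intersection of the $H$-conjugates of $\ker\beta$, an open normal subgroup of $H$ contained in $M$ through which $\beta$ factors; from the conjugation action of $H/M_0$ on the finite normal subgroup $M/M_0$ one forms, by the standard coinduced (``twisted wreath'') construction, a finite group $\bar A \supseteq A$ with a homomorphism $\bar\beta \colon H \to \bar A$ extending $\beta$, together with a finite group $\bar B$ mapping onto $\bar A$, so arranged that every lift $H \to \bar B$ of $\bar\beta$ restricts on $M$ to a lift of $\beta$. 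Using Construction~\ref{standard} one equips $\bar A$ and $\bar B$ with pile structures --- the data required on the stabilizers $H_s$ is provided by trivial homomorphisms, exactly because $M \cap H_s = 1$ makes the relative structure impose no constraint --- and Corollary~\ref{completion of homomorphism} together with Lemma~\ref{completion of solution} assemble this into a finite embedding problem for $(H,S)$. Projectivity of $(H,S)$ then yields a solution, whose restriction to $M$ is the sought lift $M \to B$. (Should this claim already be recorded in the literature, e.g.\ in \cite{HJ}, one may simply quote it.)

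The only genuine obstacle I anticipate is this middle step: carrying out the inducing construction and, above all, promoting it to a problem of piles, which is where Lemma~\ref{completion of solution} and Corollary~\ref{completion of homomorphism} are used to check that the point stabilizers present no obstruction. The decisive input throughout is the inclusion $N \cap G_t \le \tN$, which is precisely what forces $\bar N$ to meet every stabilizer of $\bfG/\tN$ trivially.
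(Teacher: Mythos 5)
Your reduction is exactly the one the paper intends: the corollary carries no written proof, and the implicit argument is Lemma \ref{mod tilde}, plus the observation that $N/\tN$ meets every stabilizer of the quotient pile trivially (your computation, using $N\cap G_t\le\tN$ together with the fact that the stabilizer of $t\tN$ in $G/\tN$ is $G_t\tN/\tN$ --- an identification the paper itself uses in the proof of Theorem \ref{proper}), plus the subgroup theorem for relatively projective groups, plus \cite[Theorem 7.7.4]{RZ} for the pro-$p$ statement. The claim you isolate is indeed recorded in the literature and is precisely what the paper quotes elsewhere for the same purpose: by Proposition \ref{pile vs group projectivity}(a) the group $G/\tN$ is projective relative to the (continuous, conjugation-closed) family of stabilizers, and \cite[Proposition 5.4.2]{HJ} --- invoked verbatim in Lemma \ref{Ker projective} --- gives that a closed subgroup meeting every member of such a family trivially is $\{1\}$-projective, i.e.\ projective; normality is not even needed. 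So the branch of your proposal that quotes the literature is complete and coincides with the paper's route.

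The alternative you sketch --- re-proving that claim by inducing the embedding problem from $M$ up to the pile $(H,S)$ via a coinduced/twisted wreath construction --- is not a proof as it stands. First, $M_0=\bigcap_{h\in H}(\ker\beta)^h$ is open in $M$ (the $H$-orbit of $\ker\beta$ is finite because the stabilizer of an open subgroup of $M$ under conjugation is open in $H$), but it is not open in $H$ unless $M$ itself is; since $M$ may have infinite index, the familiar finite-index wreath-product induction does not apply verbatim, and one must first pass to a finite quotient of $H$ into which $M/M_0$ embeds. Second, and more seriously, the decisive property of your construction --- that every lift $H\to\bar B$ of $\bar\beta$ restricts on $M$ to a lift of $\beta$ through $\alpha$ --- is asserted, not constructed; arranging this, while simultaneously equipping $\bar A$ and $\bar B$ with pile structures making $\bar B\to\bar A$ a rigid epimorphism (the stabilizer images $\bar\beta(H_s)$ need not be trivial, only their intersections with the image of $M$ are), is essentially the content of the subgroup theorem you would be re-deriving. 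Since you explicitly allow quoting \cite[Proposition 5.4.2]{HJ} instead, the proposal is acceptable, but the direct argument should either be dropped or worked out in full.
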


\begin{cor}\label{kill all stabilizers}
In the above setting
$G/\langle G_t\mid t\in T\rangle$
is a projective profinite group.
If $G$ is pro-$p$, then it is free pro-$p$.
\end{cor}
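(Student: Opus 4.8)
The plan is to obtain this as the special case $N = G$ of Corollary~\ref{projectivity mod tilde} (equivalently, of Lemma~\ref{mod tilde}). First I would note that $G$ is itself a normal subgroup of $G$, so it is a legitimate choice for the $N$ appearing in the setting of Lemma~\ref{mod tilde}. With this choice one has $N \cap G_t = G \cap G_t = G_t$ for every $t \in T$, and therefore
$\tN = \langle N \cap G_t \suchthat t \in T\rangle = \langle G_t \suchthat t \in T\rangle$,
while $N/\tN = G/\tN = G/\langle G_t \suchthat t \in T\rangle$. Thus the group in the statement is literally the group $N/\tN$ of Corollary~\ref{projectivity mod tilde}.

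It then remains only to quote Corollary~\ref{projectivity mod tilde}: since $\bfG = (G,T)$ is a projective pile, $N/\tN$ is a projective profinite group, and it is free pro-$p$ when $G$ is pro-$p$. Unwinding the chain of reasoning behind that corollary for the reader's convenience, the point is that by Lemma~\ref{mod tilde} the quotient pile $\bfG/\tN = (G/\tN, T/\tN)$ is projective; its point stabilizers are the images of the $G_t$ in $G/\tN$, which are trivial by construction of $\tN$; hence projectivity of this pile is the same as projectivity of the group $G/\tN$ (a finite embedding problem for the group is a finite embedding problem for the pile with trivial stabilizers, via Proposition~\ref{pile vs group projectivity} and Remark~\ref{pile vs group projectivity b}); and finally a projective pro-$p$ group is free pro-$p$ by \cite[Theorem 7.7.4]{RZ}.

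I expect no real obstacle here: the corollary is a direct specialization, and the only thing worth making explicit is the identification $\tN = \langle G_t \suchthat t\in T\rangle$ when $N = G$. No new argument is needed beyond the already-established Lemma~\ref{mod tilde} and Corollary~\ref{projectivity mod tilde}.
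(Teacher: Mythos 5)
Your proof is correct and is essentially the paper's argument: the paper applies Lemma~\ref{mod tilde} with $N=\langle G_t\suchthat t\in T\rangle$ (noting $\tN=N$) and then invokes Proposition~\ref{pile vs group projectivity}, while you take $N=G$ in Corollary~\ref{projectivity mod tilde}; in both cases $\tN=\langle G_t\suchthat t\in T\rangle$, the quotient pile is the same and has trivial stabilizers, and the conclusion follows identically. The only difference is which already-established intermediate statement is cited, so no new content is involved.
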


\begin{proof}
Let $N = \langle G_t\mid t\in T\rangle$.
Then 
$\tN=\langle N\cap G_t \suchthat t\in T\rangle = N$.
By Lemma \ref{mod tilde},
$\bfG/N$ is projective.
By Proposition \ref{pile vs group projectivity},
$G$ is $\{1\}$-projective,
i.e., projective.
\end{proof}

\begin{lem}\label{sub free prod}
Let $H$ be a free product of a continuous family $\Hc$
of its subgroups
and let $G$ be a closed subgroup of $H$.
Put $\G = \{G \cap \Delta^h \suchthat \Delta \in \Hc,\ h \in H\}$.
Then there is a profinite $G$-space $T$ such that
\begin{itemize}
\item [(a)]
$\G = \{G_t := \Stab_G(t) \suchthat t \in T\}$;
\item [(b)]
if $t, t' \in T$ are distinct, then
$G_t \cap G_{t'} = 1$;
\item [(c)]
the map $t \mapsto G_t$ is injective
on $\{t \in T \suchthat G_t \ne 1\}$;
\item [(d)]
$G$ is $\G$-projective.
\end{itemize}
Thus, $\bfG = (G,T)$ is a projective pile.
\end{lem}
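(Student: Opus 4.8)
The plan is to construct $T$ as the disjoint union over $(\Delta,h)$ of suitable coset spaces, or more cleanly, to realize $G$ as acting on the "standard" profinite space attached to the free product decomposition of $H$. First I would recall from the theory of free profinite (and pro-$p$) products (as in \cite{R 2017}) that $H$ acts on a profinite space $S$ — the standard $H$-extension of the indexing space $S_0$ of $\Hc$ — such that $S_0$ is a set of representatives of the $H$-orbits, $\Stab_H(s) = H_s$ for $s \in S_0$, and the stabilizer family $\{\Stab_H(s) \suchthat s \in S\}$ equals $\{\Delta^h \suchthat \Delta \in \Hc,\ h \in H\}$ with the map $s \mapsto \Stab_H(s)$ injective on the locus of nontrivial stabilizers. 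Restricting this action to $G \le H$, set $T = S$ with the $G$-action by restriction, and $G_t := \Stab_G(t) = G \cap \Stab_H(t)$. Then (a) is immediate: every $G_t$ has the form $G \cap \Delta^h$, and conversely every $G \cap \Delta^h$ arises since every $\Delta^h$ is $\Stab_H(s)$ for some $s$.

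Next, (b) and (c): if $t \ne t'$ then $\Stab_H(t) \cap \Stab_H(t') = 1$ by the basic intersection property of stabilizers in a free profinite/pro-$p$ product (cf. the use of \cite[Lemma 3.1.10]{HJ} in the proof of Proposition \ref{pile vs group projectivity}, or \cite[Theorem 9.1.12]{RZ}), hence a fortiori $G_t \cap G_{t'} = G \cap \Stab_H(t) \cap \Stab_H(t') = 1$; and (c) follows from (b) exactly as in Remark \ref{pile vs group projectivity b}, since distinct points with equal nontrivial stabilizer would violate (b). It remains to check that $(\G,T)$ is a continuous family and closed under conjugation in $G$: closure under conjugation is clear because $G_{t^g} = G_t^g$, and continuity follows from continuity of the $H$-stabilizer family on $S$ (Lemma \ref{continuous family}), intersected with the closed subgroup $G \times T \subseteq H \times T$; concretely $\hat\G = \hat{\Hc}|_{G \times T}$ is closed in $G \times T$.

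The substantive part is (d): $G$ is $\G$-projective. This is precisely the content of \cite[Corollary 5.4]{Haran} (and its pro-$p$ analogue), quoted in the introduction — a subgroup of a free profinite/pro-$p$ product is projective relative to the induced family of intersections with conjugates of the factors — so I would simply invoke that result. The main obstacle, and the only place where real work beyond bookkeeping is needed, is matching the hypotheses of that cited theorem to the present formulation: one must check that the family $\G$ here coincides with the family in \cite{Haran}, and that the notion of $\G$-projectivity used there agrees with Definition \ref{proj pile}'s notion (the paper flags the subtlety about \emph{strong} $\G$-projective versus $\G$-projective and about étale compact versus continuous families); I would note that since $\G$ is indexed by the profinite space $T$ it is continuous, and the equivalence of finite and finite rigid embedding problems (\cite[Corollary 5.1.5]{HJ}) lets one reduce to the rigid case handled in \cite{Haran}. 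Finally, having established (a)–(d), the last sentence follows immediately from Proposition \ref{pile vs group projectivity} together with Remark \ref{pile vs group projectivity b}: (d) gives condition (a) of that proposition, and (b) here is condition (b) there, so $\bfG = (G,T)$ is a projective pile.
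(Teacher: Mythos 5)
Your proposal follows essentially the same route as the paper: take $T$ to be the standard $H$-extension of the indexing space of $\Hc$, restrict the $H$-action to $G$ so that $\Stab_G(t)=G\cap\Stab_H(t)$ gives (a), deduce (b) from the trivial intersection of distinct point stabilizers in a free product and (c) from (b), quote the known relative projectivity of closed subgroups of free products for (d), and conclude via Proposition \ref{pile vs group projectivity}. The only difference is cosmetic: for (d) the paper cites \cite[Proposition 5.4.2]{HJ} directly (which already gives the strong, continuous version needed here), so the compatibility caveat you raise about \cite[Corollary 5.4]{Haran} is sidestepped rather than argued.
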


\begin{proof}
(a)
Write $\Hc = \{H_t\}_{t \in T_0}$
and let $T$ be the standard $H$-extension of $T_0$.
Then $\Hc^H = \{H_t \suchthat t \in T\}$.
For $t \in T$ put $G_t = G \cap H_t$.
Then $\G = \{G_t \suchthat t \in T\}$.
Furthermore,
$G$, as a subgroup of $H$, acts on $T$.
By Construction~\ref{standard}(a),
$\Stab_G(t) = G \cap \Stab_H(t) = G \cap H_t = G_t$.

(b)
By \cite[Proposition 4.8.3(a)]{HJ}, 
$H_t \cap H_{t'} = 1$.
As $G_t \le H_t$ and $G_{t'} \le H_{t'}$,
also
$G_t \cap G_{t'} = 1$.

(c)
Follows from (b).

(d)
\cite[Proposition 5.4.2]{HJ}.

By Proposition~\ref{pile vs group projectivity},
$\bfG$ is projective.
\end{proof}

\begin{lem}\label{Ker projective}
Let $G$ be a $\G$-projective pro-$p$ group.
Let $\rho \colon G \to L$ be an epimorphism,
injective on every $\Gamma \in \G$.
Extend $\rho$ to an epimorphism
$\rho_L \colon G \amalg L \to L$ by the identity of $L$.
Then $\Ker \rho_L$ is a free pro-$p$ group.
\end{lem}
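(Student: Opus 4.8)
The plan is to reduce the freeness of $\Ker\rho_L$ to the freeness of $\Ker\rho$ by means of the pro-$p$ Kurosh subgroup theorem, and to prove the latter using the results on projective piles established above.

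\emph{Step 1: $\Ker\rho$ is a free pro-$p$ group.} As $\G$ is a continuous family closed under conjugation, by \cite[Lemma 3.5]{Haran} there is a profinite space $T$ carrying a continuous $G$-action with $\{G_t \suchthat t\in T\}\smallsetminus\{1\}=\G\smallsetminus\{1\}$ and with $t\mapsto G_t$ injective on $T'=\{t\in T \suchthat G_t\ne 1\}$. Since the family $\{G_t \suchthat t\in T\}$ coincides with $\G$ up to the presence of the trivial subgroup, and relative projectivity is unaffected by adjoining or deleting the trivial subgroup, $G$ is $\{G_t \suchthat t\in T\}$-projective; moreover, by Remark~\ref{pile vs group projectivity b} and the injectivity of $t\mapsto G_t$ on $T'$, distinct points of $T$ have trivially intersecting $G$-stabilizers. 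Hence $\bfG=(G,T)$ is a projective pile by Proposition~\ref{pile vs group projectivity}. Put $N=\Ker\rho\normal G$ and $\tN=\langle N\cap G_t \suchthat t\in T\rangle$. For every $t\in T$ we have either $G_t=1$, or $G_t\in\G$ and then $\rho|_{G_t}$ is injective; in both cases $N\cap G_t=1$, so $\tN=1$. By Corollary~\ref{projectivity mod tilde}, $N/\tN=\Ker\rho$ is free pro-$p$.

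\emph{Step 2: the Kurosh reduction.} Write $\Gbar=G\amalg L$ and $K=\Ker\rho_L$, a normal subgroup of $\Gbar$. Then $K\cap G=\Ker\rho$ and $K\cap L=\Ker(\id_L)=1$; by normality of $K$, $K\cap G^w=(\Ker\rho)^w$ and $K\cap L^w=1$ for every $w\in\Gbar$. Applying the pro-$p$ version of the Kurosh subgroup theorem \cite[Theorem 9.1.10]{RZ} to the closed subgroup $K$ of the free pro-$p$ product $\Gbar=G\amalg L$, we conclude that $K$ is a free pro-$p$ product of a free pro-$p$ group and a continuous family of subgroups, each of the form $K\cap G^w=(\Ker\rho)^w$ or $K\cap L^w=1$. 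By Step 1 every such factor is free pro-$p$; since a free pro-$p$ product of free pro-$p$ groups is free pro-$p$, it follows that $K=\Ker\rho_L$ is free pro-$p$.

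The decisive point is Step 1: the hypothesis only provides that $G$ is $\G$-projective, so one must first produce a projective pile $(G,T)$ realizing $\G$ — for which \cite[Lemma 3.5]{Haran} and Proposition~\ref{pile vs group projectivity} are used — and then exploit that $\rho$ is injective on each member of $\G$ to force the subgroup $\tN$ attached to $N=\Ker\rho$ to be trivial, so that Corollary~\ref{projectivity mod tilde} applies. Once $\Ker\rho$ is known to be free pro-$p$, obtaining the same for $\Ker\rho_L$ via the Kurosh theorem is routine.
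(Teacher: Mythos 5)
Your Step 2 is where the argument fails. The pro-$p$ Kurosh Subgroup Theorem \cite[Theorem 9.1.10]{RZ} is a statement about \emph{open} subgroups of a free pro-$p$ product, whereas $K=\Ker\rho_L$ is a closed normal subgroup of $G\amalg L$ that is open only when $L$ is finite; in the lemma $L$ is an arbitrary pro-$p$ group. For general closed subgroups of free pro-$p$ (or profinite) products no Kurosh-type decomposition is available --- this failure is precisely the starting point of the whole paper (see the introduction, and \cite[Theorem 9.5, Corollary 9.6]{Haran}, where such a decomposition of a closed subgroup is obtained only when there is a closed set of representatives of the orbits, e.g.\ in the second countable case). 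So the step you describe as ``routine'' is exactly the obstacle the lemma must get around, and quoting Kurosh does not do it. The paper's proof avoids any decomposition of $K$: it puts $H=G\amalg L$ and $\Hc=\G\cup\{L\}$, notes that $H$ is $\Hc$-projective by \cite[Lemma 5.2.1]{HJ} (hence $\Hc^H$-projective by \cite[Lemma 5.2.4]{HJ}), observes that $K\cap\Gamma^h=1$ for every $\Gamma\in\Hc$ and $h\in H$ because $\rho_L$ is injective on each such conjugate, and concludes from \cite[Proposition 5.4.2]{HJ} that $K$ is $\{1\}$-projective, i.e.\ projective, hence free pro-$p$. If you want to salvage your write-up, this projectivity argument applied directly to $K\le H$ replaces your Step 2, and then your Step 1 is not needed at all.

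A secondary remark on Step 1: its conclusion (that $\Ker\rho$ is free pro-$p$) is correct, but the route is shakier than you indicate. As quoted in the introduction, \cite[Lemma 3.5]{Haran} provides a $G$-space $T$ with a $G$-equivariant map $t\mapsto G_t$, injective on $T'$; it does not say that $G_t$ equals $\Stab_G(t)$ --- in fact equivariance together with injectivity on $T'$ forces $\Stab_G(t)=N_G(G_t)$ for $t\in T'$, which need not be $G_t$. Proposition~\ref{pile vs group projectivity} and Corollary~\ref{projectivity mod tilde} are statements about point stabilizers, and this is exactly why the paper carries hypothesis (a) (the family is realized by stabilizers) as a separate assumption in Theorems~\ref{smain} and \ref{main small} rather than deducing it from relative projectivity alone. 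The clean argument is again \cite[Proposition 5.4.2]{HJ}: $\Ker\rho$ meets every $\Gamma\in\G$ (and every conjugate) trivially, hence is projective, hence free pro-$p$.
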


\begin{proof}
Let $H = G \amalg L$ and $K = \Ker(\rho_L)$.
Put $\Hc = \G \cup \{L\} \subseteq \Subgr(H)$.

Clearly, $L$ is $\{L\}$-projective.
By \cite[Lemma 5.2.1]{HJ},
$H$ is $\Hc$-projective,
and hence also $\Hc^H$-projective
(\cite[Lemma 5.2.4]{HJ}).
As $\rho_L$ is injective on every $\Gamma \in \Hc$,
and hence on every $\Gamma \in \Hc^H$,
we have 
$K \cap \Gamma = 1$ for every $\Gamma \in \Hc^H$.
By \cite[Proposition 5.4.2]{HJ},
$K$ is $\{1\}$-projective,
that is, projective.
Therefore
$K$ is free pro-$p$.
\end{proof}

\section{HNN-extensions}

Let $G$ be a pro-$\C$ group,
let $\G = \{G_t \suchthat t \in T\}$
be a continuous family of its subgroups,
and let $\phi \colon \G \to G$ be a morphism
such that
$\phi_t := \phi|_{G_t} \colon G_t \to G$ is injective,
for every $t \in T$.

A \textbf{pro-$\C$ HNN-extension}
$\tG :=\HNN(G,T,\G,\phi)$
is a special case of the fundamental pro-$\C$ group
of a profinite graph of pro-$\C$ groups $(\G, \Gamma)$
(see \cite[Example 6.2.3(e)]{R 2017}).
Namely, $\tG$ can be thought of as
$\Pi_1^{\C}(\G, \Gamma)$,
where
\begin{itemize}
\item[(a)]
$\Gamma$ is a bouquet of loops
(i.e., a profinite graph having just one vertex $v$)
with $T$ as the space of edges,
such that $T$ is closed in $\Gamma= \{v\} \cup T$;
\item[(b)]
$\G$ turns into a sheaf over $\Gamma$ by putting $G_v = G$;
\item[(c)]
the boundary maps $\partial_0, \partial_1 \colon \G \to G$
are the inclusion and $\phi$, respectively.
\end{itemize}

Thus an HNN-extension can be explicitly defined as follows.
Let $F(T)$ denote the free pro-$\C$ group on $T$
(see \cite[Section 3.3]{RZ}).

\begin{defi}\label{HNN-ext}
The \textbf{HNN-extension}
$\tG :=\HNN(G,T,\G,\phi)$
is the quotient of the free pro-$\C$ product
$G \amalg F(T)$
modulo the relations
\begin{equation}\label{HNN relations}
\phi_t(g_t) = g_t^{t},
\qquad g_t\in G_t,\ t\in T.
\end{equation}
We call $G$ the \textbf{base group},
$T$ the set of \textbf{stable letters},
and the subgroups $G_t$ and $\phi(G_t)$
\textbf{associated}.
The inclusion $G \to G \amalg F(T)$ induces a homomorphism
$\eta \colon G \to \tG$.

Obviously, $\tG$ has the following \textbf{universal property}:
Given a pro-$\C$ group $L$,
a homomorphism $\beta \colon G \to L$,
and a continuous map $\zeta \colon T \to L$,
such that
$\beta(\phi_t(g_t)) = \beta(g_t)^{\zeta(t)}$,
for all $g_t \in G_t$ and all $t \in T$,
then there exists a unique homomorphism
$\tilde\beta \colon \tG \to L$
such that
$\beta = \tilde\beta \circ \eta$.

We call the HNN-extension \textbf{special}, if
$\phi$ is the inclusion (so that every $t$ centralizes $G_t$)
and $\G$ is locally constant, that is,
there is a partition $T = \bigdotcup_{i=1}^n T_i$ of $T$
and there are subgroups $G_1,\ldots, G_n$ of $G$
such that
$\G = \bigdotcup_{i=1}^n T_i \times G_i$.
In particular, $G_t = G_i$ for every $t \in T_i$.
In this case we usually write
$\tG :=\HNN(G,T,\G)$,
omitting $\phi$.

\end{defi}

\begin{rem}\label{general HNN}
One could conceive a more general definition 
of a bouquet $\Gamma$ and an HNN-extension $\tG$.
Namely,
(a)
$\Gamma = \{v\} \dotcup T$
is a profinite graph having just one vertex $v$,
with $T$ as the space of edges,
this time
not necessarily closed in $\Gamma$;
(b)
$\G$ is a sheaf over $\Gamma$ such that $G_v = G$;
and
(c) above holds.
Let
$\phi \colon \Gamma \to G$ be a morphism
such that $\phi_v = \iid_G$ and
$\phi_t \colon G_t \to G$ is injective,
for every $t \in T$.
Then
$\tG :=\HNN(G,\Gamma,\G,\phi)$
is the quotient of the free pro-$\C$ product
$G \amalg F(\Gamma,v)$
modulo the relations ~\eqref{HNN relations},
where $F(\Gamma,v)$ is the free pro-$\C$ group
on the pointed profinite space $(\Gamma,v)$
(see \cite[Section 3.3]{RZ}).
(We could also add the relations
$\phi_v(g_v) = g_v^{v}$ for all $g_v\in G_v = G$,
but $v$, as an element of $F(\Gamma,v)$, is $1$,
and $\phi_v = \iid_G$,
so these relations are redundant.)

In this more general setting 
the definition of a special HNN-extension is the same
as in Definition~\ref{HNN-ext},
except that the $T_i$ in the partition
$T = \bigdotcup_{i=1}^n T_i$ 
are not only clopen in $T$, but also open in $\Gamma$
(or, equivalently, the $T_i \cup \{v\}$ are closed in $\Gamma$).

This definition
of a special HNN-extension is the one adopted in \cite{permhnn}
(see \cite[Definition 2.9, Remark 2.10]{permhnn}),
from which we are going to quote some results.
However, 
it turns out that it is equivalent to a special HNN-extension
of our Definition~\ref{HNN-ext}.

Indeed,
let
$T = \bigdotcup_{i=1}^n T_i$ 
with
$T_i \cup \{v\}$ closed in $\Gamma$.
If $\phi$ is the inclusion,
then \eqref{HNN relations} is equivalent to
\begin{equation*}
t^{g_t} = t,
\qquad g_t\in G_i,\ t\in T_i,\ 1 \le i \le n,
\end{equation*}
that is,
\begin{equation*}
t^{g_t} = t,
\qquad g_t\in G_i,\ t\in F(T_i,v).
\end{equation*}

Then $F(T_i \cup \{v\},v) \cong \coprod_{i=1}^n F(T_i \cup \{v\},v)$.
By Lemma~\ref{free group},
there are profinite spaces, $T'_1,\ldots, T'_n$
such that $F(T_i \cup \{v\},v) \cong F(T'_i)$ for each $i$.
Put $T' = \bigdotcup_{i=1}^n T'_i$,
then $F(\Gamma,v) \cong F(T')$.
\end{rem}

\begin{rem}\label{choice stable letters}
Let $\tG :=\HNN(G,T,\G)$ be a special HNN-extension.
The identity of $G$ extends to an epimorphism $\iota \colon \tG\to G$
that maps $T$ to the trivial element of $G$.
Then $\tG = G \ltimes \Ker(\iota)$.
In particular, the map $G \to \tG$ is injective.
On the other hand,
the identity of $F(T)$
together with the map that maps $G$ to the trivial element
extend to an epimorphism  $\tG \to F(T)=\tG/\langle G\rangle^G$,
and so the map $F(T) \to \tG$ is injective.

Moreover, if $\tG = G \ltimes \tF$ is another expression as semidirect product,
then we can choose the space of stable letters to be in $\tF$.
Indeed, for every $t\in T$ we have $t=h_t t'$,
for unique $h_t\in G$, $t'\in \tF$,
and so $T' = \{t' \suchthat t \in T\}$ is the needed space of stable letters,
homeomorphic to $T$,
with associated subgroups $G_t^{h_t}$.
Thus $\tF$ becomes the normal closure
$\tF=\langle T \rangle^{\tG}$ of $T$ in $\tG$.
\end{rem}

\begin{thm}\label{permext}\cite[Corollary 5.2]{permhnn}
Let $G = L \ltimes F$ be a semidirect product of a finite $p$-group $L$
and a free pro-$p$ group $F$.
Suppose that every torsion element of $G$ is $F$-conjugate into $L$.
Then $G$ is a special pro-$p$ HNN-extension with base group $L$.
\end{thm}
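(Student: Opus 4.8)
The plan is to realise the virtually free pro-$p$ group $G=L\ltimes F$ as acting on a pro-$p$ tree, to use the torsion hypothesis to collapse the quotient graph to a single vertex carrying $L$, and then to check that such a one-vertex decomposition is necessarily of the special HNN type. For the first step, since $L$ is finite and $F$ is free pro-$p$ and hence torsion free, $G$ is virtually free pro-$p$, so by the structure theory of pro-$p$ groups acting on pro-$p$ trees it acts on a pro-$p$ tree $D$ with finite vertex and edge stabilisers; I take the action minimal and the associated profinite graph of pro-$p$ groups $\Lambda=G\backslash D$ reduced, with all vertex stabilisers maximal among finite subgroups. Being torsion free and normal, $F$ meets every (finite) stabiliser trivially, so $F$ acts freely on $D$. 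The torsion hypothesis enters here: a short computation in the semidirect product shows it is equivalent to the statement that every finite subgroup of $G$ is $G$-conjugate into $L$; in particular $L$ is, up to conjugacy, the unique maximal finite subgroup of $G$, so every vertex stabiliser of $D$ is conjugate to $L$.

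The heart of the matter is to conclude that $\Lambda$ has a single vertex. By the fixed point property of finite $p$-groups acting on pro-$p$ trees, $L$ fixes a vertex $v_0$, and then $L\le G_{v_0}$ with $G_{v_0}$ finite and conjugate to $L$, forcing $G_{v_0}=L$. If $\Lambda$ had a second vertex, a suitable lift and translate would give a vertex $w$ with $G_w=L$ lying in a $G$-orbit different from that of $v_0$. Then $L$ fixes both $v_0$ and $w$, hence fixes the geodesic $[v_0,w]$ pointwise; travelling along this geodesic one meets an edge $e$ joining a vertex of the orbit of $v_0$ to a vertex of another orbit, so $e$ is a non-loop edge of $\Lambda$; both its endpoint stabilisers equal $L$, so $L\le G_e$, and since also $G_e\le G_{\partial_0 e}=L$ we get $G_e=G_{\partial_0 e}=L$ --- contradicting reducedness. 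Hence $\Lambda=\{v\}\dotcup T$ is a bouquet of loops with vertex group $L$; equivalently $G\cong\HNN(L,T,\G,\phi)$, where all the associated subgroups $G_t$ and their images $\phi_t(G_t)$ are subgroups of $L$.

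It remains to see that this HNN-extension is special. Since $L$ is finite, $t\mapsto G_t$ takes only finitely many values, all subgroups of $L$, and it is continuous into $\Subgr(L)$, so $\G$ is locally constant; by Lemma~\ref{free group} and Remark~\ref{general HNN} the space of stable letters may be taken to be an ordinary profinite space. For the gluings, let $\iota\colon G\to L$ be the retraction killing $T$; then $G=L\ltimes\widehat F$ with $\widehat F=\Ker\iota$ free pro-$p$ (it acts freely on the standard tree of the HNN-extension) and $T\subseteq\widehat F$, so for $g\in G_t\le L$ the relation $\phi_t(g)=t^{-1}gt$ forces $g^{-1}\phi_t(g)=g^{-1}t^{-1}gt\in L\cap\widehat F=1$, i.e. $\phi_t$ is the inclusion. (If one wants the stable letters inside the given $F$ one first applies Remark~\ref{choice stable letters}.) Thus $G=\HNN(L,T,\G)$ is a special pro-$p$ HNN-extension with base group $L$. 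The only genuinely hard point is the single-vertex reduction in the second paragraph: everything before it is standard pro-$p$ Bass--Serre theory and everything after it is formal, and it is precisely there that the torsion hypothesis (equivalently, the uniqueness of the maximal finite subgroup) is indispensable --- witness $\Z/2$ acting by inversion on a free pro-$2$ group, which fails the hypothesis and is not a special HNN-extension over $\Z/2$.
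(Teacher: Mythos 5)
This statement is not proved in the paper at all: it is quoted verbatim from \cite[Corollary 5.2]{permhnn}, so the relevant comparison is between your argument and that result's actual proof, which proceeds through $p$-adic integral representation theory (ultimately A.~Weiss's theorem on permutation $\Z_p[L]$-lattices, whence the title of \cite{permhnn}), not through a Bass--Serre decomposition. Your opening step is the genuine gap: you assume that the virtually free pro-$p$ group $G=L\ltimes F$ acts on a pro-$p$ tree $D$ with finite vertex and edge stabilizers, with minimal action, and that $G$ can be recovered as the fundamental pro-$p$ group of a \emph{reduced} profinite graph of finite $p$-groups over $\Lambda=G\backslash D$. Such a Stallings/Karrass--Pietrowski--Solitar type decomposition is a theorem only for \emph{finitely generated} virtually free pro-$p$ groups (Herfort--Zalesskii); here $F$ may have infinite rank, $\Lambda$ is then an infinite profinite graph, and no such structure theorem is available --- indeed Corollary 5.2 of \cite{permhnn} is itself one of the main tools for producing decompositions of infinitely generated virtually free pro-$p$ groups, so invoking ``the structure theory'' at this point is essentially circular. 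Even granting an action on a pro-$p$ tree, reconstructing $G$ as $\Pi_1$ of the quotient graph of groups requires a continuous transversal/section of $D\to G\backslash D$, which need not exist for infinite profinite graphs (this failure is exactly the phenomenon, recalled in the introduction via \cite[Proposition 4.6]{jalg}, that motivates the present paper), and the reduction to a ``reduced'' graph and the geodesic argument (``travelling along $[v_0,w]$ one meets an edge joining the two orbits'') are edge-by-edge manipulations that do not transfer routinely to infinite profinite graphs.

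Two smaller points. First, the asserted equivalence of the hypothesis with ``every finite subgroup of $G$ is conjugate into $L$'' is not a short computation in the semidirect product; it is a consequence of the tree action you have not yet constructed, so as written it is again circular. Second, in the last paragraph the retraction $\iota\colon G\to L$ ``killing $T$'' only exists once you already know $\phi_t=\mathrm{id}$ (a homomorphism sending $T\to 1$ and restricting to the identity on $L$ must respect $\phi_t(g)=g^t$); this particular issue is repairable by using instead the given projection $L\ltimes F\to F$ with kernel $F$ and moving the stable letters into $F$ as in Remark~\ref{choice stable letters}, but it does not affect the main gap above.
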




\begin{lem}\label{special properties}
Let $\tG=\HNN(L, S, \{L_s\}_{s \in S})$
be a pro-$p$ special HNN-extension
with $L$ finite.
Put $\tF = \langle S \rangle^{\tG}$ 
and let
\begin{equation*}
S' = \{s\in S \suchthat L_s \neq 1\},
\quad
S''= S \smallsetminus S',
\quad
R' = \langle C_{\tF}(L_s)) \suchthat s \in S' \rangle^{\tG}.
\end{equation*}
Then
\begin{itemize}
\item[(a)]
For
$M \le L$
we have
$C_{\tF}(M) =
\langle s^l \suchthat s \in S,\, l \in L,\, M \le L_s^l \rangle$.
\item[(b)]
$R' = \langle S' \rangle^{\tG}$.
\item[(c)]
$\tG/ \langle S' \rangle^{\tG} = L \amalg F(S'')$.
\end{itemize}
\end{lem}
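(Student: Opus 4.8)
The plan is to obtain (a) as an instance of Lemma~\ref{centralizer}, and then to deduce (b) and (c) from (a) by formal manipulations with free pro-$p$ products. Throughout, let $\iota\colon\tG\to L$ be the canonical epimorphism killing $S$; by Remark~\ref{choice stable letters} it splits and identifies $\tG$ with $L\ltimes\tF$, where $\tF=\langle S\rangle^{\tG}$.

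The inclusion $\supseteq$ in (a) is immediate: for $s\in S$ and $l\in L$ with $M\le L_s^l$, the element $s^l$ centralises $L_s^l$ (in a special HNN-extension $s$ centralises $L_s$), hence centralises $M$, and $s^l\in\langle S\rangle^{\tG}=\tF$. For the reverse inclusion the key point is to realise $\tG$ as a semidirect product of the form appearing in Lemma~\ref{centralizer}. Since $\iota$ restricts to the identity on $L$, it is injective on every conjugate $L^g$, hence on every conjugate of every $L_s$; therefore $\tF=\Ker\iota$ meets every vertex stabiliser and every edge stabiliser of the standard pro-$p$ tree $D$ of the HNN-extension trivially, so $\tF$ acts freely on $D$. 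As $\tF$ also acts transitively on the (single $\tG$-orbit of) vertices of $D$, the quotient graph $D/\tF$ has a single vertex, so $\tF$ is the free pro-$p$ group on $E(D/\tF)$; a computation of orbits identifies $E(D/\tF)$ with the coset space $\boS:=\coprod_{s\in S}L_s\backslash L$, the basis element indexed by $L_s l$ being $s^l$ (well defined, since $s$ centralises $L_s$), and conjugation by $L$ acts on $\boS$ by translation. Hence $\tG\cong F(\boS)\rtimes L$.

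Now, for $M\le L$, Lemma~\ref{centralizer} applied to the subgroup $F(\boS)\rtimes M\le\tG$ (with $M$ acting by restriction of the $L$-action) gives $C_{\tF}(M)=C_{F(\boS)}(M)=F(\boS^M)$. The basis element indexed by $L_s l$ is fixed by $m\in M$ exactly when $L_slm=L_sl$, i.e.\ $lml^{-1}\in L_s$, i.e.\ $M\le L_s^l$; hence $\boS^M=\{s^l\suchthat s\in S,\ l\in L,\ M\le L_s^l\}$ and $F(\boS^M)=\langle s^l\suchthat s\in S,\ l\in L,\ M\le L_s^l\rangle$, which is (a). I expect the identification $\tG\cong F(\boS)\rtimes L$ to be the only substantive step: I would either carry out the tree computation above in detail or quote the corresponding structural statement from \cite{permhnn}, and everything after it is bookkeeping.

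Parts (b) and (c) then follow formally. For (b): applying (a) with $M=L_s$ for $s\in S'$ gives $C_{\tF}(L_s)=\langle u^l\suchthat u\in S,\ l\in L,\ L_s\le L_u^l\rangle$; if $L_s\le L_u^l$ and $L_s\ne1$ then $L_u\ne1$, so $u\in S'$ and $u^l\in\langle S'\rangle^{\tG}$, whence $C_{\tF}(L_s)\le\langle S'\rangle^{\tG}$ and so $R'\le\langle S'\rangle^{\tG}$; conversely each $s\in S'$ lies in $C_{\tF}(L_s)\le R'$ (take $u=s$, $l=1$), so $\langle S'\rangle^{\tG}\le R'$, and $R'=\langle S'\rangle^{\tG}$. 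For (c): since $\{L_s\}_{s\in S}$ is locally constant, $S'$ is clopen, so $F(S)=F(S')\amalg F(S'')$ and $L\amalg F(S)=L\amalg F(S')\amalg F(S'')$; by Definition~\ref{HNN-ext} (with $\phi$ the inclusion) $\tG$ is this free pro-$p$ product modulo the relators $[g_s,s]$ with $g_s\in L_s$, $s\in S$, which are trivial for $s\in S''$ and lie in the normal closure of $S'$ for $s\in S'$; hence $\tG/\langle S'\rangle^{\tG}$ is $L\amalg F(S')\amalg F(S'')$ modulo the closed normal subgroup generated by the free factor $F(S')$, and killing a free factor of a free pro-$p$ product leaves the free pro-$p$ product of the remaining factors, so $\tG/\langle S'\rangle^{\tG}=L\amalg F(S'')$.
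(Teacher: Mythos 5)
Your proposal is correct and follows essentially the same route as the paper: the substantive input in (a) — that $\tF$ is free pro-$p$ on the space of conjugates $s^l$ with $L$-stabilizer $L_s^l$, which you propose to derive from the standard tree or to quote — is exactly \cite[Proposition 2.11]{permhnn}, which the paper cites before applying Lemma~\ref{centralizer}. Parts (b) and (c) are handled by the same formal manipulations as in the paper.
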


\begin{proof}
(a)
By \cite[Proposition 2.11]{permhnn},
$\tF$ is the free pro-$p$ group on its subspace
$\tS = \{s^l \suchthat s \in S,\, l \in L\}$,
and the $L$-stabilizer of $s^l \in \tS$ is $L_s^l$.
Therefore by Lemma \ref{centralizer},
$C_{\tF}(M)= \langle s^l \in \tS \suchthat M \le L_s^l \rangle$.

(b)
Let $M = L_s$.
Then $M \ne 1$, hence, by (a),
$C_{\tF}(M)$ is contained in
$\langle (s')^{l'} \in \tS \suchthat
s' \in S,\, l' \in L, \, L_{s'} \ne 1\rangle
\le \langle S' \rangle^{\tG}$.
Thus $R' \le \langle S' \rangle^{\tG}$.
But $s \in C_{\tF}(L_s)$ for all $s \in S'$,
so $\langle S' \rangle^{\tG} \le R'$.

(c)
From the presentation \eqref{HNN relations} of a special HNN-extension
\begin{equation*}
\tG/ \langle S' \rangle^{\tG} =
L \amalg F(S'') \amalg F(S')/ \langle S' \rangle^{\tG} =
L \amalg F(S'').
\qedhere
\end{equation*}
\end{proof}

Apart from special HNN-extensions
we shall be using HNN-exten\-sions only of the following two kinds:

\begin{defi}\label{normal HNN}
Let $\bfG=(G,T)$ be a pile of pro-$\C$ groups,
$T_0$ a closed subset of $T$,
and $\rho\colon G\to L$ be a homomorphism of pro-$\C$ groups,
injective on $G_t$, for every $t \in T_0$.

Put $\G = \{G_t \suchthat t \in T_0\}$
and let $G_L=G\amalg L$ be the free pro-$\C$ product of $G$ and $L$.
Then $G, L \subseteq G_L$.
Hence $\G$ is also a continuous family of subgroups of $G_L$
and $\rho$ can be viewed as a sheaf morphism $\G \to G_L$.
This point of view allows us to form the
\textbf{HNN'-extension}

\begin{equation}\label{ourHNN}
\ourHNN(G,T_0,\rho,L) := \HNN(G_L,T_0,\G,\rho).
\end{equation}
Thus, explicitly,
$\ourHNN(G,T_0,\rho,L)$ is the quotient of the free pro-$\C$ product
$G \amalg L \amalg F(T_0)$ modulo the relations
\begin{equation}\label{ourHNN relations}
\rho(g_t) = g_t^{t},
\qquad g_t\in G_t,\ t\in T_0.
\end{equation}

The inclusion $G \to G \amalg L \amalg F(T_0)$ induces the \textbf{HNN-map}
$\eta_G \colon G \to \tG$.

By \eqref{ourHNN relations},
$\rho$ extends to a homomorphism
$\trho \colon \tG\to L$,
the \textbf{HNN-extension of $\rho$},
that maps $L$ identically onto itself and $T$ to $1$.
\end{defi}

If $T_0 = T$ and $g \in G$,
then each relation $\rho(g_t) = g_t^t$,
for $g_t \in G_t$,
produces the conjugate relation
$\rho(g_t)^{\rho(g)} = g_t^{t\rho(g)}$,
in addition to the relation
$\rho(g_t^g) = (g_t^g)^{t^{\trsl g}}$.
(Here and henceforth
$t^{\trsl g} \in T \subseteq G \amalg L \amalg F(T)$
is the outcome of the action of
$g \in G$ on $t \in T$,
which is a part of the definition of $\bfG$;
we reserve the notation $t^g$ to denote the conjugate $g^{-1} t g$
of $t$ in $G \amalg L \amalg F(T)$.)
This produces too many centralizers
$(g t^{\trsl g})(t \rho(g))^{-1}$ of $G_t$.
Therefore we shall give a definition of an HNN-extension of a pile,
which is an HNN'-extension with set of stable letters $T_0$,
the image of a continuous section $T/G\to T$,
if such a section exists.

\begin{defi}\label{pileHNN}
Let $\bfG=(G,T)$ be a pile of pro-$\C$ groups,
and $\rho\colon G\to L$ be a homomorphism of pro-$\C$ groups,
injective on $G_t$, for every $t \in T$.
The \textbf{pile HNN-extension}
$$
\Gbar :=\pHNN(G,T,\rho,L)
$$
is the quotient of the free pro-$\C$ product
$G \amalg L \amalg F(T)$
modulo the relations:
\begin{equation}\label{relation of pile HNN}
g^{-1}t\rho(g)=t^{\trsl g}
\text{ for all } t\in T \text{ and } g \in G.
\end{equation}
Here $t^{\trsl g} \in T \subseteq F(T) \le G \amalg L \amalg F(T)$
is the outcome of the action of
$g \in G$ on $t \in T$,
which is a part of the definition of $\bfG$;
we reserve the notation $t^g$ to denote the conjugate $g^{-1} t g$
of $t$ in $G \amalg L \amalg F(T)$.
We call $G \amalg L$ the \textbf{base group},
$T$ the set of \textbf{stable letters},
and the subgroups $G_t$ and $\rho(G_t)$
\textbf{associated}.
The inclusion $G \to G \amalg L \amalg F(T)$ induces the \textbf{HNN-map}
$\zeta_G \colon G \to \Gbar$
and $\rho$ extends to 
$\bar\rho \colon \Gbar \to L$,
the \textbf{HNN-extension of $\rho$},
that maps $L$ identically onto itself and $T$ to $1$.
\end{defi}

This construction has an obvious functorial property:

\begin{lem}\label{functorial}
Let $\bfG=(G,T), \bfG' = (G',T')$ be piles of pro-$\C$ groups
and let $\rho\colon G\to L$
and $\rho'\colon G'\to L'$
be homomorphisms of pro-$\C$ groups,
injective on every $G_t$, resp. every $G'_t$.
Put 
$\Gbar :=\pHNN(G,T,\rho,L)$
and
$\Gbar' :=\pHNN(G',T',\rho',L')$
and let $\bar\rho, \bar\rho'$
be the HNN-extensions of $\rho, \rho'$, respectively.
Let $\psi = (\psi_G, \psi_T) \colon \bfG \to \bfG'$ be a pile morphism
and let
\begin{equation}\label{HNN diagram 1}
\xymatrix{
G \ar[r]^\rho \ar[d]_{\psi_G} & L \ar[d]^{\lambda}
\\
G' \ar[r]_{\rho'} & L'
\\
}
\end{equation}
be a commutative diagram of homomorphisms of profinite groups.
Then $\psi_G, \lambda, \psi_T$ induce a homomorphism
$\psibar \colon \Gbar \to \Gbar'$ such that
\begin{equation}\label{HNN diagram 2}
\xymatrix{
G \ar[r]^{\zeta_G} \ar[d]_{\psi_G}
& \Gbar \ar[d]^{\psibar} \ar[r]^{\bar\rho}
& L \ar[d]^{\lambda}
\\
G' \ar[r]_{\zeta_{G'}}
& \Gbar' \ar[r]_{\bar\rho'}
& L'
\\
}
\end{equation}
commutes.
\end{lem}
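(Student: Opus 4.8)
\emph{Proof plan.}
The plan is to use the presentation of $\Gbar$ as the quotient of the free pro-$\C$ product $G\amalg L\amalg F(T)$ by the relations \eqref{relation of pile HNN}. First I would build a homomorphism $\Phi\colon G\amalg L\amalg F(T)\to\Gbar'$ out of three pieces: on the factor $G$ take $\zeta_{G'}\circ\psi_G$; on the factor $L$ take $\lambda$ followed by the canonical map $L'\to\Gbar'$; and on the factor $F(T)$ take the homomorphism induced by the continuous map $T\xrightarrow{\psi_T}T'\to F(T')\to\Gbar'$, which is continuous because $\psi_T$ is. By the universal property of the free pro-$\C$ product these glue to the desired $\Phi$.

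Next I would check that $\Phi$ kills the defining relations \eqref{relation of pile HNN} of $\Gbar$, so that it factors through a homomorphism $\psibar\colon\Gbar\to\Gbar'$. Fix $t\in T$ and $g\in G$; applying $\Phi$ to $g^{-1}t\rho(g)(t^{\trsl g})^{-1}$ yields $\psi_G(g)^{-1}\,\psi_T(t)\,\lambda(\rho(g))\,(\psi_T(t^{\trsl g}))^{-1}$ in $\Gbar'$. Since $\psi=(\psi_G,\psi_T)$ is a morphism of piles, $\psi_T(t^{\trsl g})=\psi_T(t)^{\trsl\psi_G(g)}$; since \eqref{HNN diagram 1} commutes, $\lambda(\rho(g))=\rho'(\psi_G(g))$. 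Hence the expression equals $\psi_G(g)^{-1}\,\psi_T(t)\,\rho'(\psi_G(g))\,(\psi_T(t)^{\trsl\psi_G(g)})^{-1}$, which is trivial in $\Gbar'$ because it is exactly an instance of relation \eqref{relation of pile HNN} for $\Gbar'$, with stable letter $\psi_T(t)\in T'$ and group element $\psi_G(g)\in G'$. This produces $\psibar$, and $\psibar\circ\zeta_G=\zeta_{G'}\circ\psi_G$ by construction.

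Finally, to see that \eqref{HNN diagram 2} commutes it suffices to compare $\bar\rho'\circ\psibar$ with $\lambda\circ\bar\rho$ on a generating set of $\Gbar$, namely $\zeta_G(G)$, $L$, and $T$. On $\zeta_G(g)$ both sides equal $\rho'(\psi_G(g))=\lambda(\rho(g))$ by \eqref{HNN diagram 1}; on $l\in L$ both sides equal $\lambda(l)$, since $\bar\rho'$ restricts to the identity on $L'$ and $\bar\rho$ to the identity on $L$; and on $t\in T$ both sides equal $1$, since $\bar\rho'$ sends $T'$ to $1$ and $\bar\rho$ sends $T$ to $1$. Since the left square already commutes, \eqref{HNN diagram 2} commutes.

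I do not expect a real obstacle: the whole argument is bookkeeping with the presentation, and the only points needing care are the distinction between the pile action $t^{\trsl g}$ and conjugation $t^g$ inside the free product (handled exactly by the pile-morphism identity), and the continuity of the $F(T)$-component of $\Phi$, which is immediate from continuity of $\psi_T$.
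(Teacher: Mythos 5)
Your proposal is correct and follows essentially the same route as the paper: lift $\psi_G$, $\lambda$, $\psi_T$ to the free product $G\amalg L\amalg F(T)$, check that the defining relations \eqref{relation of pile HNN} are preserved using the pile-morphism identity and the commutativity of \eqref{HNN diagram 1}, and then verify \eqref{HNN diagram 2} on the generators. The only cosmetic difference is that you map directly into $\Gbar'$ rather than first into $G'\amalg L'\amalg F(T')$ and then composing with the quotient, which changes nothing.
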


\begin{proof}
The maps
$\psi_G \colon G \to G'$,
$\lambda \colon L \to L'$,
and
$\psi_T \colon T \to T'$
extend to a homomorphism
$\psihat \colon G \amalg L \amalg F(T) \to G' \amalg L' \amalg F(T')$.
Let $t \in T$ and $g \in G$.
The commutativity of \eqref{HNN diagram 1} 
gives
$$
\psihat(g^{-1}t\rho(g)) = 
\psi_G(g)^{-1} \psi_T(t) \lambda(\rho(g)) =
\psi_G(g)^{-1} \psi_T(t) \rho'(\psi_G(g))
$$
and, as $\psi$ is a morphism of piles,
$\psihat(t^{\trsl g}) = \psi_T(t)^{\trsl \psi_G(g)}$.
Therefore $\psihat$ preserves relations \eqref{relation of pile HNN}
and hence induces a homomorphism
$\psibar \colon \Gbar \to \Gbar'$.
Diagram \eqref{HNN diagram 2} commutes because $\psihat$ is
$\psi_G$ on $G$,
$\lambda$ on $L$,
and $\psi_T(T) \subseteq T'$.
\end{proof}

\begin{cor}\label{inverse limit}
Let $\{\bfG_i = (G_i, T_i)\}_{i\in I}$ be an inverse system of piles
and $\bfG = (G,T) = \varprojlim_i \bfG_i$.
Let $\{\rho_i \colon G_i \to L\}_{i \in I}$ 
be a compatible system of homomorphisms,
and let $\rho \colon G \to L = \varprojlim_i \rho_i$.
Assume that every $\rho_i$ is injective on every $(G_i)_{t_i}$
and hence
$\rho$ is injective on every $G_t$.
Then
$\pHNN(G,T,\rho,L) = \varprojlim_i \pHNN(G_i,T_i,\rho_i,L)$.
\end{cor}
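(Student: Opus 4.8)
The plan is to produce the canonical comparison homomorphism coming from functoriality and then show it is an isomorphism by checking surjectivity and injectivity separately, with injectivity as the crux. Write $\Gbar = \pHNN(G,T,\rho,L)$ and $\Gbar_i = \pHNN(G_i,T_i,\rho_i,L)$. Compatibility of the $\rho_i$ is exactly what makes the square \eqref{HNN diagram 1} commute with $\psi_G$ the connecting map $G_j\to G_i$ and $\lambda=\id_L$; hence Lemma~\ref{functorial} turns $\{\Gbar_i\}$ into an inverse system over $I$, and likewise the projections $\bfG\to\bfG_i$ together with $\id_L$ induce a compatible family $\psibar_i\colon\Gbar\to\Gbar_i$, hence a homomorphism $\kappa\colon\Gbar\to\varprojlim_i\Gbar_i$ compatible with the structure maps from $G$, $L$, $T$ and with the HNN-extensions of $\rho$. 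We may assume the projections $G\to G_i$ and $T\to T_i$ are surjective.

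Surjectivity of $\kappa$ is the easy half: $\image\kappa$ is a closed subgroup of $\varprojlim_i\Gbar_i$ containing the images of $G$, $L$, $T$, so its projection to each $\Gbar_i$ contains the images of $G_i$, $L$, $T_i$ (here I use surjectivity of $G\to G_i$ and $T\to T_i$), and these topologically generate $\Gbar_i$; a closed subgroup of an inverse limit that surjects onto every term is the whole limit.

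For injectivity I would show that every open normal subgroup $U\normal\Gbar$ contains $\Ker\psibar_j$ for some $j$, which gives $\Ker\kappa=\bigcap_i\Ker\psibar_i\le\bigcap_U U=1$. Given $U$, let $q\colon\Gbar\to\Gbar/U$. Since $G=\varprojlim_i G_i$, $T=\varprojlim_i T_i$ and $\Gbar/U$ is finite, $q|_G$ and $q|_T$ factor through $G_j$ and $T_j$ for a common index $j$, yielding maps $\bar\beta\colon G_j\to\Gbar/U$, $\bar\mu=q|_L\colon L\to\Gbar/U$, $\bar\zeta\colon T_j\to\Gbar/U$. Applying $q$ to the defining relations \eqref{relation of pile HNN} of $\Gbar$, using that $\psi_j$ is a pile morphism and that $\rho=\rho_j\circ\psi_{G,j}$, and then using surjectivity of the projections to cover all elements, one checks that $\bar\beta,\bar\mu,\bar\zeta$ satisfy the defining relations of $\pHNN(G_j,T_j,\rho_j,L)$ (which is defined since $\rho_j$ is injective on each $(G_j)_{t_j}$); the universal property of $\Gbar_j$ then produces $\theta\colon\Gbar_j\to\Gbar/U$, and comparing on the generators coming from $G$, $L$, $T$ gives $\theta\circ\psibar_j=q$, so $\Ker\psibar_j\le\Ker q=U$. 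Finally, a continuous bijection of profinite groups is an isomorphism, so $\kappa$ is the asserted identification, compatible with all the structure maps.

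I expect the injectivity step to be the main obstacle: one must push the finitely many obstructions detected by a single finite quotient of $\Gbar$ down to one stage $\Gbar_j$ and recognize the resulting data as a genuine morphism out of $\Gbar_j$ — precisely the point at which compatibility of the $\rho_i$, injectivity on the stabilizers, and the reduction to surjective systems all come into play.
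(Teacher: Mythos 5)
Your argument is correct, and it supplies what the paper leaves unsaid: the authors state Corollary \ref{inverse limit} without proof, as an immediate consequence of the functoriality in Lemma \ref{functorial}, whereas you construct the comparison map $\kappa\colon\pHNN(G,T,\rho,L)\to\varprojlim_i\pHNN(G_i,T_i,\rho_i,L)$ explicitly and verify that it is an isomorphism. Your two halves are the standard mechanism for such statements: a closed subgroup of an inverse limit that surjects onto every term is dense, hence everything (surjectivity); and for injectivity you factor a finite quotient $q\colon\Gbar\to\Gbar/U$ through some stage by pushing $q$ restricted to the images of $G$ and $T$ down to $G_j$ and $T_j$, check the relations \eqref{relation of pile HNN} for $\Gbar_j$ by lifting each pair $(t_j,g_j)$ and applying $q$ to the corresponding relation in $\Gbar$ (the compatibility $\rho=\rho_j\circ\pi_j$ and the $G$-equivariance of $\pi_j$ are exactly what this needs), and then invoke the presentation of $\Gbar_j$ to obtain $\theta$ with $\theta\circ\psibar_j=q$, so $\Ker\psibar_j\le U$. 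This is in substance the argument the authors take for granted, spelled out via the universal property rather than via an appeal to inverse limits commuting with the presentation.

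One caveat: the sentence ``we may assume the projections $G\to G_i$ and $T\to T_i$ are surjective'' is not a genuine reduction. Replacing each $\bfG_i$ by the image of $\bfG$ changes the groups $\pHNN(G_i,T_i,\rho_i,L)$, and it is not clear that the inverse limit of the new system agrees with that of the old one; so surjectivity should be carried as a standing hypothesis (or read into the statement) rather than assumed without loss of generality. Both halves of your proof use it essentially: to see that the generators of $\Gbar_i$ lie in the projection of $\image\kappa$, and to lift the pairs $(t_j,g_j)$ when checking the relations. This is harmless in context, since in the only application of the corollary (the proof of Theorem \ref{proper}) the groups $G_U$, spaces $T_U$ and $L_U$ are quotients of $G$, $T$, $L$, so all projections are surjective, and that is clearly the intended reading.
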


\begin{rem}\label{HNN to pHNN}
In \eqref{relation of pile HNN},
if $g \in G_t$, then 
$t^{\trsl g} = t$,
so \eqref{relation of pile HNN} reads
$g^{-1}t\rho(g)=t$,
that is,
$\rho(g)= g^t$,
relation~\eqref{HNN relations}.
Thus for a closed subset $T_0$ of $T$ the identity maps of $G$, $L$ and $T_0 \to T$ induce
a homomorphism
$\theta \colon \ourHNN(G,T_0,\rho|_{T_0},L) \to \pHNN(G,T,\rho,L)$.
If $T =\{t^{\dot g}\mid t\in T_0,g\in G\}$, then  $\theta$ is an epimorphism.
By \eqref{relation of pile HNN},
$\Ker \theta$
is the normal closure of 
$\{(t^{\trsl g})^{-1} g^{-1} t \rho(g) \suchthat t \in T_0,\, g \in G\}$.
\end{rem}

\begin{lem}\label{with section}
Let $\bfG=(G,T)$ be a pile of pro-$\C$ groups.
Suppose that
there is a closed set
$T_0$ of representatives of the $G$-orbits of $T$.
Let 
$\Gbar=\pHNN(G,T,\rho,L)$
and
$\tG = \ourHNN(G,T_0,\rho|_{T_0},L)$.
Then the identities of $G$, $L$, and $T_0$ induce
an isomorphism $\tG \to \Gbar$.
\end{lem}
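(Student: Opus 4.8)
The plan is to produce the homomorphism $\theta\colon\tG\to\Gbar$ of Remark~\ref{HNN to pHNN} together with an explicit two-sided inverse. Since $T_0$ meets every $G$-orbit we have $T=\{t^{\trsl g}\suchthat t\in T_0,\ g\in G\}$, so by Remark~\ref{HNN to pHNN} the identities of $G$, $L$ and the inclusion $T_0\hookrightarrow T$ already induce an epimorphism $\theta\colon\tG\to\Gbar$; it remains to build $\bar\mu\colon\Gbar\to\tG$ with $\theta\circ\bar\mu=\id_{\Gbar}$ and $\bar\mu\circ\theta=\id_{\tG}$.

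To construct $\bar\mu$ I will use that $\Gbar$ is the quotient of $G\amalg L\amalg F(T)$ by the relations~\eqref{relation of pile HNN}: it suffices to give the identity maps $G\to\tG$ and $L\to\tG$ together with a continuous map $\nu\colon T\to\tG$ compatible with~\eqref{relation of pile HNN}. For $t\in T$ write $t=t_0^{\trsl g}$ with $t_0\in T_0$ and $g\in G$, and set $\nu(t)=g^{-1}t_0\rho(g)\in\tG$. This is well defined: if $t_0^{\trsl g}=(t_0')^{\trsl g'}$ with $t_0,t_0'\in T_0$, then $t_0$ and $t_0'$ lie in the same $G$-orbit, hence $t_0=t_0'$, and then $h:=gg'^{-1}\in G_{t_0}$; the defining relation $\rho(h)=h^{t_0}$ of $\tG=\ourHNN(G,T_0,\rho|_{T_0},L)$ (that is, \eqref{HNN relations}) gives $h^{-1}t_0\rho(h)=t_0$, whence $g^{-1}t_0\rho(g)=g'^{-1}h^{-1}t_0\rho(h)\rho(g')=g'^{-1}t_0\rho(g')$. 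For continuity, note that $(t_0,g)\mapsto g^{-1}t_0\rho(g)$ is a continuous map $T_0\times G\to\tG$ which, by the computation just made, is constant on the fibres of the surjection $q\colon T_0\times G\to T$, $(t_0,g)\mapsto t_0^{\trsl g}$; since $q$ is a continuous surjection of compact Hausdorff spaces it is a quotient map, so $\nu$ is continuous. This is the one place where the hypothesis that $T_0$ is a \emph{closed} set of orbit representatives is used (alternatively one may identify $(G,T)$ with the standard $G$-extension of $T_0$ of Construction~\ref{standard}). Finally the relations~\eqref{relation of pile HNN} are respected: writing $t=t_0^{\trsl g_0}$ one has $g^{-1}\nu(t)\rho(g)=g^{-1}g_0^{-1}t_0\rho(g_0)\rho(g)=(g_0g)^{-1}t_0\rho(g_0g)=\nu(t_0^{\trsl g_0 g})=\nu(t^{\trsl g})$. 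Hence there is an induced homomorphism $\bar\mu\colon\Gbar\to\tG$.

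It then remains only to check that $\bar\mu$ and $\theta$ are mutually inverse, which I will do on (topological) generating sets. The composite $\theta\circ\bar\mu$ fixes $G$ and $L$ pointwise and sends $t=t_0^{\trsl g_0}\in T$ to $g_0^{-1}t_0\rho(g_0)\in\Gbar$, which equals $t_0^{\trsl g_0}=t$ by~\eqref{relation of pile HNN}; since $G\cup L\cup T$ generates $\Gbar$, we get $\theta\circ\bar\mu=\id_{\Gbar}$. The composite $\bar\mu\circ\theta$ fixes $G$ and $L$ pointwise and sends $t_0\in T_0$ (viewed in $T\subseteq\Gbar$) to $\nu(t_0)$; using the representation $t_0=t_0^{\trsl 1}$ one gets $\nu(t_0)=t_0$, so $\bar\mu\circ\theta$ is the identity on the generating set $G\cup L\cup T_0$ of $\tG$. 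Therefore $\theta$ is an isomorphism. The only genuinely delicate point is the well-definedness and continuity of $\nu$; everything else is a routine verification on generators.
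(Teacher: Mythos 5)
Your proposal is correct and follows essentially the same route as the paper: use Remark~\ref{HNN to pHNN} for the epimorphism $\theta$, then define the inverse on $T$ by $t_0^{\trsl g}\mapsto g^{-1}t_0\rho(g)$, check it is well defined via the relation $\rho(h)=h^{t_0}$ for $h\in G_{t_0}$, and verify the two composites on generating sets. Your extra care about continuity (descending along the quotient map $T_0\times G\to T$) fills in a point the paper leaves implicit, but it is the same argument.
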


\begin{proof}
By Remark~\ref{HNN to pHNN},
these identities induce an epimorphism $\theta \colon \tG \to \Gbar$.

Conversely,
define a continuous map $T \to \tG$ by
$t^{\trsl g} \mapsto g^{-1}t\rho(g)$,
for $t \in T_0$ and $g \in G$.
It is well defined:
If $t^{\trsl g} = t$,
then $g \in G_t$,
hence
$\rho(g)= g^t$ in $\tG$,
that is,
$g^{-1}t\rho(g)=t$;
it follows that if 
$t^{\trsl g} = t^{\trsl h}$,
then
$g^{-1}t\rho(g) = h^{-1}t\rho(h)$ in $\tG$.
Thus this map, together with the identities of $G$ and $L$,
defines a homomorphism $\lambda \colon \Gbar \to \tG$.
Notice that $\lambda$ is the identity on $T_0$
and we have
$\Gbar = \langle G, L, T_0 \rangle$, by \eqref{relation of pile HNN}.
Therefore $\lambda$ is the inverse of $\theta$.
\end{proof}

We shall later need the following simple

\begin{lem}\label{free product}
Let $G$ be a pro-$p$ group having a subgroup $L$. Suppose $G=\varprojlim_{i\in I} G_i$ such that $G_i=F_i\amalg L_i$, where $F_i$ is a free pro-$p$ group and $L_i$ is the image of $L$ in $G_i$. Then $G=F\amalg L$ for some free pro-$p$ group $F$.
\end{lem}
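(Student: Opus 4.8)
The statement is: if $G = \varprojlim_{i} G_i$ with $G_i = F_i \amalg L_i$, where $F_i$ is free pro-$p$ and $L_i$ is the image of the fixed subgroup $L \le G$ in $G_i$, then $G = F \amalg L$ for a free pro-$p$ group $F$. The plan is to exhibit $G$ as a pile HNN-extension (or rather recognize the relevant free product directly) using the machinery built up earlier in the paper, and to extract freeness of the complement from Corollary~\ref{projectivity mod tilde} (projective pro-$p$ groups are free pro-$p$).

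First I would set up the inverse system carefully: we may assume the transition maps $G_j \to G_i$ (for $i \le j$) restrict to the identity-induced maps $L_j \to L_i$ and send $F_j$ into $G_i$, so that the diagram is compatible with the inclusions $L_i \hookrightarrow G_i$ and with the retractions $\pi_i \colon G_i \to L_i$ that kill $F_i$ (these retractions exist because $G_i = F_i \amalg L_i$ is a free product, so there is a canonical epimorphism $G_i \to L_i$ restricting to the identity on $L_i$). Passing to the limit gives a retraction $\pi \colon G \to L$ with $\pi|_L = \id_L$, hence $G = N \rtimes L$ where $N = \Ker \pi = \varprojlim_i \Ker(\pi_i) = \varprojlim_i F_i$. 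So the content reduces to showing that $N$ is a free pro-$p$ group and that the splitting is compatible with a free-product decomposition, i.e. that $G = N' \amalg L$ for a suitable free pro-$p$ $N'$ (note $N$ itself need not be the free factor, only its image up to the semidirect-product twist — but since we only need the existence of \emph{some} free pro-$p$ $F$ with $G = F \amalg L$, it suffices to produce one such $F$).

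The key step is to show that the kernel $N$, equivalently $\varprojlim_i F_i$, is free pro-$p$ and that $G$ is the free pro-$p$ product of $L$ with a copy of it. For this I would invoke Corollary~\ref{projectivity mod tilde} / Lemma~\ref{Ker projective}: consider the pile $\bfG = (G, T)$ where $T$ is chosen so that $\G = \{G_t\}$ is (after taking the limit of the standard $G_i$-extensions $T_i$ of the orbit-representative sets $X_i$) the family of conjugates of $L$ together with trivial groups; projectivity of this pile is inherited from the finite layers, and $\rho = \pi \colon G \to L$ is injective on each $G_t$ (it is the identity on conjugates of $L$ up to conjugation). Then Lemma~\ref{Ker projective} applied with $H = G \amalg L$ shows $\Ker(\rho_L)$ is free pro-$p$, and unwinding this in the inverse limit (each $G_i = F_i \amalg L_i$ already has this shape, so the maps $G_i \amalg L \to L$ have free kernel at every finite stage and the limit of free pro-$p$ groups along these particular surjections stays free by \cite[Theorem 7.7.4]{RZ} together with projectivity being a limit-closed property) gives that $G$ sits inside $G \amalg L$ as the kernel-plus-$L$ in the required free-product form. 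More directly: the natural map $G \to G \amalg L$ followed by the collapse $L \to 1$ recovers $G$, and comparing with the finite layers where $G_i = F_i \amalg L_i$ literally, the universal properties assemble to $G = F \amalg L$ with $F = \varprojlim_i F_i$, once we know $F$ is free pro-$p$.

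The main obstacle is precisely that last point: an inverse limit of free pro-$p$ groups is \emph{not} in general free pro-$p$, so one cannot simply say $F = \varprojlim F_i$ is free. What saves us is that the transition maps here are not arbitrary — they are the restrictions of maps $G_j \to G_i$ that carry the free-product structure and the retraction onto $L$, so $F = \Ker(\pi)$ is the kernel of a map that is injective on every member of the relevant conjugacy-closed family $\G$ of a projective pile; by Corollary~\ref{projectivity mod tilde} such a kernel is projective, hence free pro-$p$. So the real work is (i) assembling the pile structure on $(G,T)$ from the finite layers and checking its projectivity (which follows from Example~\ref{basic pile} and Proposition~\ref{general EP}, since each $\bfG_i$ built from $G_i = F_i \amalg L_i$ is a basic pro-$p$ pile hence projective, and projectivity passes to inverse limits by \cite[Lemma 7.3]{HJ-real}-type arguments as used in Proposition~\ref{general EP}), and (ii) verifying $\rho = \pi$ is injective on each stabilizer so that Corollary~\ref{projectivity mod tilde} / Lemma~\ref{Ker projective} applies. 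Once $F$ is known to be free pro-$p$, the free-product decomposition $G = F \amalg L$ follows by comparing universal properties with the finite layers and passing to the limit, using that free pro-$p$ products commute with the inverse limits in question.
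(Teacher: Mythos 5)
There is a genuine gap, and it sits at both ends of your argument. First, the opening step ``we may assume the transition maps are compatible with the retractions $\pi_i\colon G_i\to L_i$ that kill $F_i$'' is unjustified: the hypothesis gives a decomposition $G_i=F_i\amalg L_i$ separately at each level, with no compatibility whatsoever between the free factors for different $i$. The transition map $G_j\to G_i$ carries $L_j$ onto $L_i$, but there is no reason it carries $F_j$ (or $\Ker\pi_j$, which incidentally is the normal closure $F_j^{G_j}$, not $F_j$) into $\Ker\pi_i$. So the $\pi_i$ need not form an inverse system, the limit retraction $\pi\colon G\to L$ on which your whole plan rests is never constructed, and the later appeal to Lemma~\ref{Ker projective} is close to circular, since it needs exactly such a $\rho\colon G\to L$ injective on the conjugates of $L$. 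What \emph{is} compatible is the system of quotients $G_i/L_i^{G_i}\cong F_i$; this is how the paper gets its splitting: $G/L^G\cong\varprojlim F_i$ is free pro-$p$ (a surjective inverse limit of free pro-$p$ groups is projective, hence free --- so your side remark about such limits is also off the mark), whence $G\to G/L^G$ splits and $G=F\ltimes L^G$ with $F$ free.

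Second, even granting a semidirect decomposition with free complement or free kernel, the genuinely nontrivial point is upgrading it to a free product $G=F\amalg L$; a semidirect product of a free pro-$p$ group and $L$ is in general not a free product. You dispose of this with ``comparing universal properties \dots\ free pro-$p$ products commute with the inverse limits in question,'' but no such principle is available here: precisely because the decompositions $G_i=F_i\amalg L_i$ are not compatible along the system, there is no inverse system of free-product decompositions to pass to the limit with. The paper does the real work cohomologically: it proves that the restriction maps $H^1(G,\F_p)\to H^1(F,\F_p)\oplus H^1(L,\F_p)$ and $H^2(G,\F_p)\to H^2(F,\F_p)\oplus H^2(L,\F_p)$ are isomorphisms --- surjectivity on $H^1$ uses that a character of $L$ factors through some $L_i$ and extends over $G_i=F_i\amalg L_i$, and the $H^2$ statement uses $H^2(F_i,\F_p)=0$ together with the level-wise isomorphisms --- and then invokes the cohomological characterization of free pro-$p$ products \cite[Theorem 9.3.10]{RZ}. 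Nothing in your proposal plays the role of this last step, so the proof as sketched does not go through.
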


\begin{proof}
Put $K=L^G$. Then $K\triangleleft G$.
Note that $G/K=\varprojlim_{i\in I} G_i/L_i^{G_i}\cong \varprojlim_{i\in I} F_i$ is a free pro-$p$ group. Therefore the quotient map $G\to G/L^G$ splits. We denote by $F$ the image of this splitting. Then $G$ is the semidirect product of $F$ and $K$.
The natural embeddings of $F$ and $L$ into $G$ induce the cohomology homomorphisms
\begin{align}
H^1(G,\F_p) &\to H^1(F,\F_p)\oplus H^1(L,\F_p), \label{H1} \\
H^2(G,\F_p) &\to H^2(L,\F_p)\oplus H^2(L,\F_p). \label{H2}
\end{align}
We first claim that the map in \eqref{H1} is an isomorphism.

To show that it is injective,
let $\rho \colon G \to \F_p$ be a homomorphism such that
$\rho|_{F} = 0$ and $\rho|_{L} = 0$.
Then, for all $x \in L$ and $\sigma \in G$, we have
$\rho(x^\sigma) = -\rho(\sigma)+ \rho(x) +\rho(\sigma) = \rho(x) = 0$,
hence $\rho|_{K} = 0$.
Since also $\rho|{F} = 0$ and $G = FK$, we have $\rho = 0$.

To show that the map in \eqref{H1} is surjective,
let $\varphi \colon F \to \F_p$ and $\psi \colon L \to \F_p$ be two homomorphisms.
We have to show that $\varphi, \psi$ extend to a homomorphism
$\rho \colon G \to \F_p$.

There is $i \in I$ such that $\psi$ factors as $\psi = \bar\psi \circ \pi_{i,L}$,
where
$\pi_{i,L} \colon L \to L_i$ is the restriction to $L$ of the map
$\pi_i \colon G \to G_i$ of the inverse system
and $\bar\psi \colon L_i \to \F_p$ is a homomorphism.
Since $G_i=F_i\amalg L_i$, we can extend $\bar\psi$ to a homomorphism $\bar\psi' \colon G \to \F_p$.
Then $\psi' = \bar\psi' \circ \pi_i \colon G \to \F_p$ is a homomorphism that extends $\psi$.

For all $x \in L$ and $\sigma \in G$ we have
$\psi(x^\sigma) = \psi(x)^{\psi(\sigma)} = \psi(x)$.
Therefore,
$\psi(x^\sigma) = \psi(x)$,
for all $x \in K$ and $\sigma \in G$.
Finally, we extend $\psi|_{K}$ and $\varphi$ to a homomorphism
$\rho \colon G=FK \to \F_p$ by
$\rho(yx) = \varphi(y) \psi(x)$, for $y \in F$ and $x \in K$.

Now, the map in \eqref{H2} is an isomorphism as well.
Indeed, since $G=\varprojlim_{i\in I} G_i$, $L=\varprojlim_{i\in I} L_i$,
$H^2(F,\F_p)= 0$, and $H^2(F_i,\F_p) = 0$ for every $i \in I$,
the map in \eqref{H2} is the direct limit of the homomorphisms
$$
H^2(G_i,\F_p)\to H^2(L_i,\F_p)\oplus H^2(F_i,\F_p),
$$
that are isomorphisms by \cite[Theorem 9.3.10]{RZ}. Hence by \cite[Theorem 9.3.10]{RZ} again, $G=F\amalg L$.
\end{proof}

\section{HNN-extensions of relatively projective pro-$p$ groups}

In this section we fix a prime $p$.
For a profinite space $T$
let $F(T)$ denote the free pro-$p$ group on $T$.

Let $\bfG=(G,T)$ be a projective pile of pro-$p$ groups
and let $\rho\colon G\to L$ be a homomorphism of pro-$p$ groups,
injective on $G_t$, for every $t \in T$.

Let
\begin{equation}\label{mainHNN}
\tG =
\ourHNN(G,T,\rho,L)
\quad \text{and} \quad
\Gbar =
\pHNN(G,T,\rho,L).
\end{equation}
Let 
$\trho \colon \tG\to L$
and
$\bar\rho \colon \Gbar\to L$
be the HNN-extensions of $\rho$ (see Definitions \ref{normal HNN} and \ref{pileHNN}). 
Let $F = \Ker \rho$,
$\tF = \Ker \trho$, and
$\Fbar = \Ker \bar\rho$.
Then 
\begin{equation}\label{semidirect with L}
\tG = \tF \rtimes L
\qquad \text{ and } \qquad
\Gbar = \Fbar \rtimes L.
\end{equation}
By \cite[Lemma 10]{HZ 07}, $\tF$ is a free pro-$p$ group.

\begin{lem}\label{mod L}
$\Gbar/L^{\Gbar} \cong G/\langle G_t\mid t\in T\rangle \amalg F(T/G)$.
\end{lem}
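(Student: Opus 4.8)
The plan is to compute $\Gbar/L^{\Gbar}$ directly from the presentation in Definition~\ref{pileHNN}. Recall that $\Gbar$ is the quotient of $G\amalg L\amalg F(T)$ by the relations $g^{-1}t\rho(g)=t^{\trsl g}$ for $t\in T$, $g\in G$. Killing $L^{\Gbar}$ kills $L$ (and hence also $\rho(g)$ for every $g\in G$), so modulo $L^{\Gbar}$ the generators of $G\amalg L\amalg F(T)$ reduce to those of $G\amalg F(T)$ and the defining relations become $g^{-1}tg=t^{\trsl g}$, i.e. the action of $G$ on $T$ is realized by conjugation. Thus $\Gbar/L^{\Gbar}$ is the quotient of $G\amalg F(T)$ by the relations $t^g=t^{\trsl g}$ for all $t\in T$, $g\in G$; equivalently it is $F(T)\rtimes G$ further quotiented by identifying, inside $F(T)$, the free generator $t^{\trsl g}$ with the conjugate $g^{-1}tg$. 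I would make this precise by invoking the universal property of $\Gbar$ (Definition~\ref{pileHNN}) to produce the map $\Gbar/L^{\Gbar}\to (F(T)\rtimes G)/\!\sim$ and conversely, so that the two presentations agree.

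Next I would identify the resulting group. Inside $F(T)\rtimes G$, the relations $g^{-1}tg = t^{\trsl g}$ say exactly that conjugation by $g\in G$ on the subgroup $F(T)$ agrees with the given $G$-action on the generating space $T$. But in the semidirect product coming from the $G$-action on the profinite space $T$ (in the sense of Lemma~\ref{centralizer}), this already holds on the nose when $g$ fixes $t$, while for general $g$ the element $g^{-1}tg$ and the generator $t^{\trsl g}$ are a priori distinct. Imposing $g^{-1}tg = t^{\trsl g}$ for all $t,g$ collapses each $G$-orbit of generators to a single element up to the action, so the image of $F(T)$ becomes the free pro-$p$ group on the orbit space, modified by the stabilizer relations: if $g\in G_t$ then $g^{-1}tg=t$, i.e. $t$ commutes with $G_t$. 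The clean way to organize this: first pass to the quotient that makes $G$ act on the generators, getting $F(T)\rtimes G$ modulo "$G$ acts via conjugation'', and recognize the subgroup generated by $G$ and a section $T/G\to T$ of the orbit map; but since such a section need not exist globally, I would instead argue via inverse limits, reducing to finite piles where Lemma~\ref{G-partition} gives orbit representatives.

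Concretely, I would use Corollary~\ref{inverse limit}: write $\bfG=(G,T)=\varprojlim_i(G_i,T_i)$ with each $(G_i,T_i)$ finite, and $L=\varprojlim_i L_i$ compatibly with $\rho$, so that $\Gbar=\varprojlim_i\pHNN(G_i,T_i,\rho_i,L_i)$, hence $\Gbar/L^{\Gbar}=\varprojlim_i \bigl(\pHNN(G_i,T_i,\rho_i,L_i)/L_i^{(\cdot)}\bigr)$ (using that $L^{\Gbar}=\varprojlim_i L_i^{(\cdot)}$ and that the inverse limit of the quotients computes the quotient of the inverse limit). For finite $(G_i,T_i)$, Lemma~\ref{G-partition} (or simply Lemma~\ref{with section}) gives a set $T_{0,i}$ of $G_i$-orbit representatives, so by Lemma~\ref{with section} $\pHNN(G_i,T_i,\rho_i,L_i)=\ourHNN(G_i,T_{0,i},\rho_i|_{T_{0,i}},L_i)$, and killing $L_i^{(\cdot)}$ in the explicit presentation \eqref{ourHNN relations} gives $G_i\amalg F(T_{0,i})$ modulo the relations $g_t=g_t^t$ for $g_t\in (G_i)_t$, $t\in T_{0,i}$ — that is, $t$ centralizes $(G_i)_t$. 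Factoring further through $G_i/\langle(G_i)_t\mid t\in T_i\rangle$ these relations become vacuous, so the quotient is $\bigl(G_i/\langle(G_i)_t\mid t\in T_i\rangle\bigr)\amalg F(T_{0,i})$, and $T_{0,i}$ is in bijection with $T_i/G_i$. Passing to the limit and using $\varprojlim_i T_i/G_i = T/G$ and $\varprojlim_i G_i/\langle(G_i)_t\rangle = G/\langle G_t\mid t\in T\rangle$ (compatibility of the normal closures under the inverse system, which is routine), together with continuity of free pro-$p$ products under inverse limits, yields $\Gbar/L^{\Gbar}\cong G/\langle G_t\mid t\in T\rangle\amalg F(T/G)$.

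The main obstacle I anticipate is the bookkeeping at the finite level: checking carefully that quotienting the presentation \eqref{ourHNN relations} by $L_i^{(\cdot)}$ really does leave only the relations "$t$ centralizes $(G_i)_t$'' (one must be sure no extra relations among the $t$'s or between $t$ and elements of $G_i$ survive), and that these relations are precisely killed upon passing to $G_i/\langle(G_i)_t\mid t\in T_i\rangle$ so that the remaining group is genuinely a free pro-$p$ product. A secondary point needing care is that the inverse limit of the normal closures $L_i^{G_i}$ inside $G_i$ equals $L^{\Gbar}$ inside $\Gbar$, and that inverse limits commute with forming these quotients; both follow from compactness and the fact that $L^{\Gbar}$ is closed, but should be stated explicitly. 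Everything else is a direct application of Lemmas~\ref{with section}, \ref{centralizer}, \ref{free product} and Corollary~\ref{inverse limit}.
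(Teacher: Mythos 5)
Your computation of what survives after killing $L^{\Gbar}$ is wrong, and the error is the crux of the whole lemma. In $G\amalg L\amalg F(T)$ the element $\rho(g)$ lies in the free factor $L$; when you pass to $\Gbar/L^{\Gbar}$ it is sent to $1$ --- it does \emph{not} turn back into $g$. So relation \eqref{relation of pile HNN} becomes $g^{-1}t=t^{\trsl g}$, not $g^{-1}tg=t^{\trsl g}$, and relation \eqref{ourHNN relations} becomes $g_t^{\,t}=1$, i.e.\ $g_t=1$, not ``$t$ centralizes $G_t$''. This difference is exactly where the factor $G/\langle G_t\suchthat t\in T\rangle$ comes from: with the correct relations the stabilizers are killed automatically in the quotient, whereas with your relations they survive. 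A one-point test case shows your intermediate claim is false: if $T=\{t\}$ with $G_t=G\neq 1$ and $\rho$ injective, then killing $L$ correctly yields $g^{-1}t=t$ for all $g$, hence the quotient is $F(\{t\})\cong\Z_p$ (as the lemma predicts), while your relations ``$t$ centralizes $G$'' would give $G\times\Z_p$. Your subsequent move of ``factoring further through $G_i/\langle (G_i)_t\rangle$'' is then not a repair but a logical gap: it computes a proper quotient of the (incorrectly identified) group, whereas the lemma asserts an isomorphism with $\Gbar/L^{\Gbar}$ itself, so nothing in your argument establishes that isomorphism.

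Two secondary points. First, the reduction via Corollary \ref{inverse limit} to \emph{finite} piles is not available as stated: that corollary keeps $L$ fixed and requires each $\rho_i$ to be injective on every stabilizer $(G_i)_{t_i}$, which fails for arbitrary finite quotient piles (this is precisely the difficulty the paper handles in Theorem \ref{proper} by passing to $G_U=G/\tN_U$, which is not finite). Second, no limit argument is needed here at all: the paper's proof is direct. After killing $L^{\Gbar}$ one has $(G\amalg F(T))/N$ with $N=\langle t^{\trsl g}t^{-1}g \suchthat t\in T,\ g\in G\rangle^{G\amalg F(T)}$; taking $g\in G_t$ shows $K=\langle G_t\suchthat t\in T\rangle\le N$, so one may pass to $G/K$ acting on $T/K$, and this action is \emph{free}, so a closed set $\Tbar_0$ of orbit representatives exists by \cite[Lemma 5.6.5]{RZ}; an explicit inverse homomorphism (sending $t^{\trsl g}\mapsto g^{-1}t$ for $t\in\Tbar_0$) then identifies the quotient with $G/K\amalg F(\Tbar_0)\cong G/K\amalg F(T/G)$. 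If you correct the relation as above, your finite-level computation collapses to this same argument and the inverse-limit scaffolding becomes unnecessary.
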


\begin{proof}
Let $K = \langle G_t \suchthat t\in T\rangle$.
This is a normal subgroup of $G$.
Using the presentation \eqref{relation of pile HNN}
we see that
$\Gbar/L^{\Gbar} = (G \amalg F(T))/N$,
where
$N = \langle t^{\trsl g}t^{-1}g \suchthat t\in T, \; g \in G \rangle^{G \amalg F(T)}$.

If $g \in G_t$, then 
$t = t^{\trsl g}$,
so $g \in N$,
and hence $K \le N$.
If
$g, h \in G$ satisfy $g \equiv h \pmod{K}$,
then $g \equiv h \pmod{N}$,
hence
$t^{\trsl g} \equiv g^{-1}t \equiv h^{-1}t = t^{\trsl h} \pmod{N}$.
Thus
$\Gbar/L^{\Gbar} = (G/K \amalg F(T/K))/(N/K)$.

As $K$ contains the stabilizers of all $t \in T$, the action
of $G/K$ on $T/K$ is with trivial stabilizers,
and hence $T/K$ has a closed set $\Tbar_0$
of representatives of the $G/K$-orbits (see \cite[Lemma 5.6.5]{RZ}).

Now note that the homomorphism
\begin{equation*}
G/K \amalg F(\Tbar_0) \to (G/K \amalg F(T/K))/(N/K)
\end{equation*}
has an inverse, induced from the homomorphism
$(G/K \amalg F(T/K)) \to G/K \amalg F(\Tbar_0)$ 
given by the identity of $G/K$
and mapping $t^{\trsl g} \mapsto g^{-1}t$,
for $t \in \Tbar_0$ and $g \in G/K$. So 
$\Gbar/L^{\Gbar} = G/K \amalg F(\Tbar_0)$.

The quotient map $T/K \to (T/K)/(G/K) = T/G$
maps $\Tbar_0$ homeomorphically onto $T/G$.
Hence we may write
$\Gbar/L^{\Gbar} \cong G/K \amalg F(T/G)$.
\end{proof}

\begin{lem}\label{section}
Assume that there exists
a closed set of representatives $T_0$ of the $G$-orbits in $T$.
Then
$\Gbar = E \amalg L \amalg F(T_0)$,
where
$E \le G$ is a free pro-$p$ group
isomorphic to $G/\langle G_t\mid t\in T\rangle$.
\end{lem}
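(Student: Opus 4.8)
The plan is to use Lemma~\ref{with section} to replace $\Gbar$ by the HNN$'$-extension with stable letters $T_0$, substitute a free product decomposition of the base group $G$, and then eliminate the edge groups. By Lemma~\ref{with section} the identities of $G$, $L$ and $T_0$ induce an isomorphism $R:=\ourHNN(G,T_0,\rho|_{T_0},L)\xrightarrow{\sim}\Gbar$, so it suffices to prove $R\cong E\amalg L\amalg F(T_0)$ with $E$ as in the statement. By Definition~\ref{normal HNN}, $R$ is the quotient of the free pro-$p$ product $G\amalg L\amalg F(T_0)$ by the relations $\rho(g_{t_0})=g_{t_0}^{t_0}$ for $t_0\in T_0$ and $g_{t_0}\in G_{t_0}$; equivalently, $g_{t_0}=t_0\rho(g_{t_0})t_0^{-1}$.

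The crucial input is a free product decomposition of $G$. Since $\bfG=(G,T)$ is a projective pile, $G$ is $\{G_t\mid t\in T\}$-projective (Proposition~\ref{pile vs group projectivity}); as $T_0$ is a closed set of representatives of the $G$-orbits, \cite[Theorem~9.5]{Haran} (in its pro-$p$ form) exhibits $G$ as a closed subgroup of a free pro-$p$ product such that $\G$ is exactly the family of intersections of $G$ with the conjugates of the factors, and the pro-$p$ Kurosh subgroup theorem \cite[Theorem~9.1.10]{RZ} then yields a free pro-$p$ product decomposition
$$G\;=\;\Big(\coprod_{t_0\in T_0}G_{t_0}\Big)\amalg E ,$$
where $\coprod_{t_0\in T_0}G_{t_0}$ is the free pro-$p$ product over the sheaf $\{G_{t_0}\}_{t_0\in T_0}$ (trivial factors being irrelevant) and $E$ is a free pro-$p$ group; collapsing the $G_{t_0}$ and their conjugates identifies $E\cong G/\langle G_t\mid t\in T\rangle$, in accordance with Corollary~\ref{kill all stabilizers}.

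Put $P:=E\amalg L\amalg F(T_0)$. Substituting the decomposition of $G$ into the presentation of $R$, and using that the relations read $g_{t_0}=t_0\rho(g_{t_0})t_0^{-1}$, we eliminate the free factors $G_{t_0}$. Define $\varpi\colon R\to P$ from the homomorphism $G\amalg L\amalg F(T_0)\to P$ which is the identity on $L$ and on $F(T_0)$, the inclusion on $E$, and, on each free factor $G_{t_0}$ of $G$, is the homomorphism $g_{t_0}\mapsto t_0\rho(g_{t_0})t_0^{-1}\in\langle L,t_0\rangle\le P$ (legitimate since $G$ is a free product, so no compatibility among the factors is needed); as $\varpi(\rho(g_{t_0}))=\rho(g_{t_0})$ and $\varpi(g_{t_0}^{t_0})=t_0^{-1}(t_0\rho(g_{t_0})t_0^{-1})t_0=\rho(g_{t_0})$, this map kills the HNN relations and descends to $R$. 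Let $\iota\colon P\to R$ be induced by the inclusions $E\le G\subseteq R$, $L\subseteq R$, $T_0\subseteq R$. Then $\varpi\circ\iota=\id_P$ tautologically, and $\iota\circ\varpi$ is the identity on the topological generating set $E\cup L\cup T_0\cup\bigcup_{t_0\in T_0}G_{t_0}$ of $R$: this is clear on $E$, $L$, $T_0$, while on $g_{t_0}\in G_{t_0}$ we have $\iota\varpi(g_{t_0})=t_0\rho(g_{t_0})t_0^{-1}=g_{t_0}$ in $R$ by the defining relation. Hence $R\cong P=E\amalg L\amalg F(T_0)$, and therefore $\Gbar\cong E\amalg L\amalg F(T_0)$, with $E$ (a free factor of $G$, hence a subgroup of $G$) isomorphic to $G/\langle G_t\mid t\in T\rangle$.

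The only substantial point is the free product decomposition $G=(\coprod_{t_0\in T_0}G_{t_0})\amalg E$, which is where projectivity of the pile and the existence of the closed section $T_0$ are essential; granting it, the remainder is a purely formal elimination of the edge groups.
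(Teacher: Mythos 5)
Your proposal is correct and follows essentially the same route as the paper: reduce to $\ourHNN(G,T_0,\rho|_{T_0},L)$ via Lemma~\ref{with section}, invoke the decomposition $G=(\coprod_{t_0\in T_0}G_{t_0})\amalg E$ from \cite[Theorem 9.5]{Haran}, eliminate the factors $G_{t_0}$ using the HNN relations, and identify $E\cong G/\langle G_t\mid t\in T\rangle$ as the cokernel of collapsing the $G_{t_0}$. Your explicit pair of mutually inverse homomorphisms merely spells out the step the paper dismisses with ``from the presentation it follows.''
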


\begin{proof}
By \cite[Theorem 9.5]{Haran},
$G = (\coprod_{t\in T_0} G_t) \amalg E$
for some free pro-$p$ group $E$.
By Lemma \ref{with section},
$\Gbar = \ourHNN(G, T_0, \phi|_{T_0},L)$.
From the presentation of this group
(see (\ref{ourHNN relations}))
it follows that
$\Gbar = E \amalg L \amalg F(T_0)$.

As 
$\langle G_t \suchthat t \in T \rangle$
is the kernel of the projection 
$(\coprod_{t\in T_0} G_t) \amalg E \to E$,
we have
$E \cong G/\langle G_t\mid t\in T\rangle $.
\end{proof}

\begin{lem}\label{HNN EP}
Assume that 
$L$ is finite and let $\mathcal{P}$ be a partition of $T$.
Let $A$ be a finite $p$-group
and let $\varphi \colon G \to A$ be an epimorphism.
Then
\begin{itemize}
\item[(a)]
there exist
a commutative diagram
of piles
\begin{equation}\label{triangle}
\xymatrix{
& \bfG \rlap{$\ =(G,T)$} \ar[d]_{\varphi} \ar[dl]_{\psi}
\\
\llap{$(B,Y)=\ $} \bfB \ar[r]^{\alpha} & \bfA \rlap{$\ =(A,X)$}
}
\end{equation}
in which
$\bfA = (A,X)$ is a finite pile, 
$\bfB = (B,Y)$ is a basic pro-$p$ pile (see Example \ref{basic pile}), 
and
$\alpha \colon \bfB \to \bfA$ is a rigid epimorphism.
Moreover,
$\varphi(T) = X$,
and the partition
$\{\varphi_T^{-1}(x) \suchthat x \in X\}$
is finer than $\mathcal{P}$.
\item[(b)]
Let $\rho_A \colon A \to L$ be an epimorphism
such that $\rho = \rho_A \circ \varphi$
and put
$\rho_B = \rho_A \circ \alpha$.
Let
$\Abar = \pHNN(A, X, \rho_A, L)$
and
$\Bbar = \pHNN(B, Y, \rho_B, L)$.
Then there is a commutative diagram
\begin{equation}\label{induced diagram}
\labelmargin-{2.5pt}
\xymatrix@=8pt{
&&& G \ar[dr]^{\zeta_G} \ar[ddd]_(.4){\varphi}|!{[dd]}\hole \ar[dddlll]_{\psi}
\\
&&&& \Gbar \ar[ddd]_(.4){\bar\varphi} \ar@/^1pc/[rdddd]^{\bar\rho}
\ar@<-.5pt>[dddlll]_{\bar\psi}
\\
&&&
\\
B \ar[dr]_{\zeta_B} \ar[rrr]^(.4){\alpha}|!{[rr]}\hole 
&&& A \ar[dr]_<{\zeta_A} 
& {}
\\
& \Bbar \ar[rrr]_(.4){\bar\alpha} 
\ar@/_1pc/[rrrrd]_{\bar\rho_B}
&&& \Abar \ar[rd]_{\bar\rho_A}
\\
&&&&& L
}
\end{equation}
in which $\bar\rho, \bar\rho_A, \bar\rho_B$
are the HNN-extensions of
$\rho, \rho_A, \rho_B$,
respectively.
\end{itemize}
\end{lem}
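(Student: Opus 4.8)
The plan is to prove (a) by exhibiting a (possibly infinite) embedding problem for the projective pile $\bfG$ whose vertices are the required $\bfA$ and $\bfB$, and to derive (b) from the functoriality of $\pHNN$ recorded in Lemma~\ref{functorial}.

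For (a) I would first promote the given group epimorphism $\varphi\colon G\to A$ to a morphism of piles. Applying Lemma~\ref{decomposition 0}(a) to the partition $\mathcal{P}$ and the open normal subgroup $N_0=\Ker\varphi$, together with Lemma~\ref{G-partition} (which arranges that the stabilizers of the blocks are point stabilizers), I obtain a finite pile $\bfA=(A,X)$ and an extension of $\varphi$ to a morphism $\varphi\colon\bfG\to\bfA$ such that $\varphi_T$ is surjective, the partition $\{\varphi_T^{-1}(x)\}_{x\in X}$ is finer than $\mathcal{P}$, and each $A_x$ equals $\varphi(G_t)$ for a suitable $t\in\varphi_T^{-1}(x)$; this last point ensures, since $\rho=\rho_A\circ\varphi$ is injective on every $G_t$, that $\rho_A$ is injective on every $A_x$ — which is needed in (b) even to form $\Abar$. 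Next, mimicking the construction in the proof of Proposition~\ref{pile vs group projectivity}(b), I pick a finite set $Y_0$ of representatives of the $A$-orbits in $X$, set $B=F\amalg(\coprod_{x\in Y_0}A_x)$ with $F$ a finitely generated free pro-$p$ group, let $\alpha\colon B\to A$ restrict to the inclusion on each $A_x$ and to a fixed surjection on $F$, and let $Y$ be the standard $B$-extension of $Y_0$ (Construction~\ref{standard}). Then $\bfB=(B,Y)$ is a basic pro-$p$ pile (Example~\ref{basic pile}) and $\alpha\colon\bfB\to\bfA$ is a rigid epimorphism, so $(\varphi\colon\bfG\to\bfA,\ \alpha\colon\bfB\to\bfA)$ is an embedding problem for $\bfG$; by Proposition~\ref{general EP} — which covers embedding problems that need not be finite — it has a solution $\psi\colon\bfG\to\bfB$ with $\alpha\circ\psi=\varphi$. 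This is the diagram~\eqref{triangle}, with all the asserted properties.

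For (b), put $\rho_B=\rho_A\circ\alpha$; this is injective on every $B_y$ because $\alpha$ is rigid and $\rho_A$ is injective on every $A_x$. One has commuting squares $\rho=\rho_B\circ\psi$, $\rho_B=\rho_A\circ\alpha$ and $\rho=\rho_A\circ\varphi$, so Lemma~\ref{functorial}, applied to the pile morphisms $\psi\colon\bfG\to\bfB$, $\alpha\colon\bfB\to\bfA$ and $\varphi\colon\bfG\to\bfA$ each paired with $\lambda=\id_L$, produces homomorphisms $\bar\psi\colon\Gbar\to\Bbar$, $\bar\alpha\colon\Bbar\to\Abar$ and $\bar\varphi\colon\Gbar\to\Abar$ that commute with the maps $\zeta_G,\zeta_B,\zeta_A$ and satisfy $\bar\rho_B\circ\bar\psi=\bar\rho$, $\bar\rho_A\circ\bar\alpha=\bar\rho_B$ and $\bar\rho_A\circ\bar\varphi=\bar\rho$. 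It remains to check $\bar\alpha\circ\bar\psi=\bar\varphi$: both are homomorphisms $\Gbar\to\Abar$ that agree on the generating set $G\cup L\cup T$ of $\Gbar$ — on $G$ they equal $\alpha\circ\psi=\varphi$, on $L$ they equal $\id_L$, and on $T$ they equal $\alpha_Y\circ\psi_T=\varphi_T$ — hence they coincide. Putting these pieces together yields the commutative diagram~\eqref{induced diagram}.

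The main obstacle is the construction of $\bfA$ in (a): one must simultaneously extend $\varphi$ to a morphism of piles, force the fibres of $\varphi_T$ to refine the prescribed $\mathcal{P}$, and keep every $A_x$ equal to the image of some point stabilizer $G_t$. The first and third of these are precisely what Lemma~\ref{G-partition} and Lemma~\ref{decomposition 0} provide; dovetailing them with the refinement of $\mathcal{P}$ is the delicate point. Everything downstream of $\bfA$ — the basic pile $\bfB$, rigidity of $\alpha$, the solution $\psi$ via Proposition~\ref{general EP}, and the whole HNN layer in (b) — is then routine.
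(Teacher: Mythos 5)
Your part (b), and the second half of part (a) -- the construction of the basic pile $\bfB$ from orbit representatives, the rigidity of $\alpha$, the use of Proposition~\ref{general EP} to produce $\psi$, and the application of Lemma~\ref{functorial} with $\lambda=\id_L$ to obtain $\bar\psi,\bar\alpha,\bar\varphi$ -- follow the paper's argument and are fine. The gap is in your very first step, the construction of the finite pile $\bfA=(A,X)$ together with the extension of $\varphi$ to a morphism of piles. Lemma~\ref{decomposition 0}(a), applied with $N_0=\Ker\varphi$, does not deliver what you claim: it produces an epimorphism $\bfG\to(B,Y)$ whose underlying group is some quotient $G/N$ with $N\le\Ker\varphi$ small enough that $T\to Y$ factors suitably; the group of that pile is not the prescribed $A$, and the group component of the morphism is not the given epimorphism $\varphi$. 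The lemma you are proving requires a pile structure on $A$ itself with the given $\varphi$ as the group part (part (b) depends on this, since $\rho_A$ is a map out of $A$), and neither Lemma~\ref{decomposition 0}(a) nor Lemma~\ref{G-partition} supplies that.

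Nor can one simply push the pile $(G/N,Y)$ down to $A$: the only available quotient space is $X=Y/(\Ker\varphi/N)$, whose fibres in $T$ are unions of $\Ker\varphi/N$-orbits of the fibres of $T\to Y$, so the property of being finer than $\mathcal{P}$ is lost, and the stabilizers $A_x$ become images of block stabilizers that may strictly contain $\varphi(G_t)$ -- precisely the property you rightly flag as necessary for $\rho_A$ to be injective on each $A_x$ and hence for $\Abar$ to be defined. The paper resolves this delicate point by a different device: it takes $X_0$ to be representatives of the conjugacy classes in $\varphi(\G)$, lets $X$ be the standard $A$-extension of $X_0$ enlarged by sufficiently many copies, and invokes the ``Moreover'' clause of Corollary~\ref{completion of homomorphism} (the counting hypothesis \eqref{Y large} is exactly what the extra copies secure) to complete the given group epimorphism $\varphi$ to a pile morphism $\varphi\colon\bfG\to\bfA$ whose fibres $\{\varphi_T^{-1}(x)\suchthat x\in X\}$ refine $\mathcal{P}$, shrinking $X$ to $\varphi_T(T)$ afterwards to make $\varphi_T$ surjective. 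Your proposal never invokes Corollary~\ref{completion of homomorphism}, and without it (or an equivalent mechanism) the simultaneous requirements -- group part equal to $\varphi$, fibres refining $\mathcal{P}$, stabilizers equal to images of point stabilizers -- are not established; this is a genuine gap, not a cosmetic one.
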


\begin{proof}
Let $\G = \{G_t \suchthat t \in T\}$.
Let $A_1,\ldots, A_n$ be representatives
of all (not necessarily distinct) conjugacy classes of
$\varphi(\G) \subseteq \Subgr(A)$. 
We can view them as indexed by the discrete set
$X_0 = \{1,\ldots, n\}$.
Let $X$ be the standard $A$-extension of $X_0$.
Then $\bfA = (A,X)$ is a finite pile,
with $\{A_x \suchthat x \in X\} = \varphi(\G)$.

Replacing $X$ with the union of suitably many copies of $X$,
if necessary,
we may assume that
$|\{x \in X \suchthat A_x = A'\}|$
is sufficiently large, for every $A' \in \varphi(\G)$.
Thus, by Corollary~\ref{completion of homomorphism},
we can complete $\varphi$
(by a continuous map $\varphi_T \colon T \to X$)
to a morphism of piles
$\varphi \colon \bfG \to \bfA$
such that the partition
$\{\varphi_T^{-1}(x) \suchthat x \in X\}$
is finer than $\mathcal{P}$.
Replace $X$ by $\varphi_T(T)$
to assume that $\varphi_T$ is surjective.

Let $P_B$ be a free pro-$p$ group
with an epimorphism $\alpha' \colon P_B \to A$
and let $B_x$ be a copy of $A_x$, for each $x \in X_0$.
Form the free pro-$p$ product $B = P_B \amalg (\coprod_{x \in X_0} A_x)$,
and let $\alpha \colon B \to A$
be the epimorphism that extends $\alpha'$
and is identity on each $A_x$.
Let $Y$ be the standard $B$-extension of $X_0$.
Then $\bfB = (B,Y)$ is a pile
and the identity of $X_0$ extends $\alpha$ to a rigid epimorphism
$\alpha \colon \bfB \to \bfA$.

By Proposition \ref{general EP}
there is a morphism $\psi \colon \bfG \to \bfB$
such that $\alpha \circ \psi = \varphi$.

(b)
By assumption
$\rho_A = \rho \circ \varphi$
and $\rho_B = \rho_A \circ \alpha$.
Hence also
$$
\rho =
\bar\rho \circ \zeta_G =
\pi \circ \bar\lambda \circ \zeta_G =
\rho_A \circ \varphi =
\rho_A \circ \alpha \circ \psi =
\rho_B \circ \psi.
$$
The existence of $\bar\varphi$, $\bar\alpha$, and $\bar\psi$
follows from this by Lemma~\ref{functorial},
with the pile morphism
$\varphi$ (resp. $\alpha$, $\psi$)
and the identity of $L$.
The commutativity of \eqref{triangle}
ensures that \eqref{induced diagram} is commutative.
\end{proof}

\begin{lem}\label{finite pile HNN proper}
Assume that $L$ is finite.
Then the HNN-map $\zeta_G \colon G \to \Gbar$ 
is injective.
\end{lem}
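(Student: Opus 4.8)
The plan is to reduce, via Lemma~\ref{HNN EP}, to the case where $\bfG$ is a basic pro-$p$ pile, and then to treat that case by hand. For the reduction, fix $1 \neq g \in G$; it suffices to exhibit a homomorphism from $\Gbar$ to a pro-$p$ group not killing $\zeta_G(g)$. Choose an open normal $N \normal G$ with $N \le \Ker \rho$ and $g \notin N$, set $A = G/N$, let $\varphi \colon G \to A$ be the quotient map, and let $\rho_A \colon A \to L$ be the map induced by $\rho$, so that $\rho = \rho_A \circ \varphi$ (we may assume $\rho$, hence $\rho_A$, is onto, replacing $L$ by $\rho(G)$). Apply Lemma~\ref{HNN EP} with an arbitrary partition $\mathcal{P}$: it yields a basic pro-$p$ pile $\bfB = (B,Y)$, a rigid epimorphism $\alpha \colon \bfB \to \bfA$, a morphism $\psi \colon \bfG \to \bfB$ with $\alpha \circ \psi = \varphi$, and, for $\rho_B = \rho_A \circ \alpha$ and $\Bbar = \pHNN(B,Y,\rho_B,L)$, the commutative diagram~\eqref{induced diagram}, which in particular contains a homomorphism $\bar\psi \colon \Gbar \to \Bbar$ with $\bar\psi \circ \zeta_G = \zeta_B \circ \psi$. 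Since $\varphi(g) = \alpha(\psi(g)) \neq 1$ we get $\psi(g) \neq 1$; so once $\zeta_B$ is known to be injective, $\bar\psi(\zeta_G(g)) = \zeta_B(\psi(g)) \neq 1$, and we are done. Thus it remains to prove the lemma for basic pro-$p$ piles.

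So let $\bfB = (B,Y)$ be a basic pro-$p$ pile: $B = P_B \amalg \big(\coprod_{x \in X_0} A_x\big)$ with $P_B$ free pro-$p$, $X_0$ finite, the $A_x$ finite, and $Y$ the standard $B$-extension of $X_0$; thus $X_0$ is a finite --- hence closed --- set of representatives of the $B$-orbits in $Y$ with $\Stab_B(x) = A_x$. By Lemma~\ref{with section}, $\Bbar \cong \ourHNN(B,X_0,\rho_B|_{X_0},L) = \HNN(B \amalg L,\, X_0,\, \{A_x\}_{x \in X_0},\, \rho_B|)$. As $X_0$ is finite this is an iterated one-stable-letter pro-$p$ HNN-extension of $B \amalg L$, and at each stage the source associated subgroup $A_x$ is a free factor of the current base group while the target associated subgroup $\rho_B(A_x)$ lies in the complementary free factor (the one containing $L$). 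For one stable letter the situation is $\HNN(A \amalg C', x, A \to C')$ with $A$ a free factor and $\rho_B|$ a monomorphism $f \colon A \to C'$; eliminating the generators of $A$ through the relations $a = x f(a) x^{-1}$ identifies this group with the free pro-$p$ product $C' \amalg \langle x\rangle$, $\langle x\rangle \cong \Z_p$, and identifies the HNN-map $A \amalg C' \to C' \amalg \langle x\rangle$ with the map that is the inclusion on $C'$ and $a \mapsto x f(a) x^{-1}$ on $A$. This last map is injective: its image is $\langle C',\, x f(A) x^{-1}\rangle$, and, looking at the action of $C' \amalg \langle x\rangle$ on the standard pro-$p$ tree of that free product, $C'$ and $x f(A) x^{-1}$ fix two vertices in distinct orbits joined by an edge with trivial stabiliser, so by the structure theory of pro-$p$ groups acting on pro-$p$ trees (equivalently, the pro-$p$ Kurosh subgroup theorem) $\langle C',\, x f(A) x^{-1}\rangle = C' \amalg x f(A) x^{-1} \cong C' \amalg A$. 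Iterating over the finitely many elements of $X_0$ shows that $B \amalg L$, and in particular $B$, embeds in $\Bbar$; that is, $\zeta_B$ is injective.

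The heart of the matter is this last injectivity claim: after each one-stable-letter HNN-extension with a free-factor source subgroup has been rewritten as a free pro-$p$ product with a $\Z_p$-factor, one must recognise that the image of the base group is again the expected free pro-$p$ product, and this is precisely where the pro-$p$ tree (Kurosh) machinery enters --- there is no purely formal retraction argument, and indeed a general pro-$p$ HNN-extension need not embed its base group, so the free-factor hypothesis, obtained by passing to a basic pile, is essential. Everything else is bookkeeping with Lemmas~\ref{HNN EP}, \ref{with section} and \ref{functorial}.
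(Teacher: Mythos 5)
Your reduction to the basic case is essentially the paper's own: choose an open normal subgroup contained in $\Ker\rho$ avoiding a given $g\ne 1$, apply Lemma~\ref{HNN EP} to obtain the diagrams \eqref{triangle} and \eqref{induced diagram}, and use $\bar\psi\circ\zeta_G=\zeta_B\circ\psi$ together with $\Ker\psi\le\Ker\varphi$. (A minor point: ``replacing $L$ by $\rho(G)$'' is not a harmless normalization, since $\Gbar=\pHNN(G,T,\rho,L)$ depends on $L$ and injectivity into $\pHNN(G,T,\rho,\rho(G))$ does not formally transfer; the honest fix is to note that surjectivity of $\rho_A$ is never actually used.)

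The genuine gap is in your basic case, at the step you yourself single out as the heart of the matter. The Tietze rewriting $\HNN(A\amalg C',x,f)\cong C'\amalg\overline{\langle x\rangle}$ is fine, but you must then show that the closed subgroup generated by $C'$ and $xf(A)x^{-1}$ in $C'\amalg\overline{\langle x\rangle}$ is their free pro-$p$ product via the natural map, and your justification does not stand as written: in the standard pro-$p$ tree of $C'\amalg\overline{\langle x\rangle}$ the vertices fixed by $C'$ and by $xf(A)x^{-1}$ are $1\cdot C'$ and $xC'$, which lie in the \emph{same} orbit and are at distance two (both adjacent to the vertex stabilized by $\overline{\langle x\rangle}$), not ``in distinct orbits joined by an edge''; and there is no ``pro-$p$ Kurosh subgroup theorem'' for arbitrary closed subgroups --- its failure is precisely the phenomenon this paper addresses, so it cannot be invoked for the closed, in general neither open nor second countable, subgroup $\overline{\langle C',\,xf(A)x^{-1}\rangle}$. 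The claim itself is true and can be repaired, e.g.\ by decomposing $\Ker\bigl(C'\amalg\overline{\langle x\rangle}\to\Z_p\bigr)$ as a free pro-$p$ product of the conjugates $C'^{x^\lambda}$ continuously indexed by $\lambda\in\Z_p$, separating the indices $0$ and $1$ by a clopen partition, and using that a free pro-$p$ product of finitely many monomorphisms is a monomorphism (or by citing a theorem on subgroups generated by two vertex stabilizers of a pro-$p$ tree with trivial edge stabilizers); but some such argument must be supplied. The paper sidesteps all of this: for a basic pile, Lemma~\ref{with section} exhibits $\Gbar$ as an HNN-extension of $G\amalg L$, which by Lemma~\ref{Ker projective} is (free pro-$p$)$\rtimes L$, and the properness criterion of \cite[Lemma A.1]{permhnn} then gives injectivity at once --- that citation is exactly the tool your argument is missing.
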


\begin{proof}
First assume that $\bfG$ is basic
(Example \ref{basic pile}).
By Lemma \ref{with section},
$\Gbar$ is an HNN'-extension of $G$,
that is, 
an HNN-extension of $G \amalg L$.
Notice that $G \amalg L = \Ker(\rho_L) \rtimes L$,
where $\Ker(\rho_L)$ is a free pro-$p$ group,
by Lemma~\ref{Ker projective},
and so $\zeta_G$ is injective, by \cite[Lemma A.1]{permhnn}.

In the general case
it suffices to show that $\Ker(\zeta_G)$
is contained in every open normal subgroup $K$ of $G$.

Put $A = G/K$ and let $\varphi \colon G\to A$ be the quotient map.
Replacing $K$ with $K \cap F$ we may assume that $K \le F$,
and hence there is an epimorphism $\pi \colon A \to L$
such that $\rho = \pi \circ \varphi$.
By Lemma \ref{HNN EP} there are commutative diagrams
\eqref{triangle} and \eqref{induced diagram}.

By the first paragraph of this proof
$\zeta_B$ is injective, hence by \eqref{induced diagram}
$$
\Ker \zeta_G \le \Ker \psibar \circ \zeta_G = 
\Ker \zeta_B \circ \psi = \Ker \psi.
$$
By \eqref{triangle},
$\Ker \psi \le \Ker \varphi = K$.
Thus $\Ker \zeta_G \le K$.
\end{proof}

\begin{lem}\label{finite pileHNN structure}
Assume that $L$ is finite.
Then 
\begin{equation}\label{mod R}
\Gbar = L \amalg E,
\end{equation}
where $E \cong G/\langle G_t \suchthat t\in T\rangle \amalg F(T/G)$.
\end{lem}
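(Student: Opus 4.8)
The plan is to first settle the case where $\bfG$ is a basic pile and then reduce the general case to it by an inverse limit argument, in the spirit of the proof of Lemma~\ref{finite pile HNN proper}.

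\textbf{Basic case.} Suppose $\bfG$ is a basic pro-$p$ pile (Example~\ref{basic pile}). Then $T$ is the standard $G$-extension of a \emph{finite} set $T_0$ of representatives of the $G$-orbits, which is in particular a closed such set. Hence Lemma~\ref{section} applies and yields $\Gbar = E_0 \amalg L \amalg F(T_0)$, where $E_0$ is a free pro-$p$ group isomorphic to $G/\langle G_t\mid t\in T\rangle$. As $T_0$ is finite it is homeomorphic to $T/G$, so $F(T_0)\cong F(T/G)$; thus $E_0\amalg F(T_0)$ is a free pro-$p$ group isomorphic to $G/\langle G_t\mid t\in T\rangle\amalg F(T/G)$, and $\Gbar = L\amalg(E_0\amalg F(T_0))$ has the form~\eqref{mod R}. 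In particular, for a basic pile $\Gbar$ is the free pro-$p$ product of $L$ with a free pro-$p$ group, with $L$ appearing as a free factor.

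\textbf{General case.} Let $\mathcal{N}$ be the directed set of open normal subgroups $N$ of $G$ with $N\le F=\Ker\rho$; since $F$ is open in $G$ these are cofinal among all open normal subgroups of $G$, so $\bigcap_{N\in\mathcal{N}}N=1$, $\bfG=\varprojlim_N(G/N,T/N)$, and $\rho$ factors through each quotient. By Corollary~\ref{inverse limit}, $\Gbar=\varprojlim_N\Gbar_N$ with $\Gbar_N=\pHNN(G/N,T/N,\rho_N,L)$. For each $N$ apply Lemma~\ref{HNN EP} to the quotient map $\varphi_N\colon G\to G/N$ and to the partition of $T$ into the fibres of $T\to T/N$: this produces a basic pro-$p$ pile $\bfB_N$, a rigid epimorphism $\alpha_N\colon\bfB_N\to\bfA_N$, a morphism $\psi_N\colon\bfG\to\bfB_N$, and, in the notation of part~(b) with $\rho_A=\rho_N$, the HNN-extensions $\Abar_N,\Bbar_N$ and the morphisms $\bar\varphi_N\colon\Gbar\to\Abar_N$, $\bar\psi_N\colon\Gbar\to\Bbar_N$, $\bar\alpha_N\colon\Bbar_N\to\Abar_N$ of diagram~\eqref{induced diagram}. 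Here $\bar\psi_N$ is the identity on $L$, $\bar\alpha_N\circ\bar\psi_N=\bar\varphi_N$, and by the basic case $\Bbar_N=L\amalg E_N$ with $E_N$ free pro-$p$ and $L$ the image of $L\subseteq\Gbar$. The choice of partition makes $\bfA_N$ dominate the finite pile $(G/N,T/N)$ (the stabilizers being the groups $\varphi_N(G_t)$ on both sides), so $\bar\varphi_N$ refines the structure map $\Gbar\to\Gbar_N$; consequently $\bigcap_N\Ker\bar\psi_N\le\bigcap_N\Ker\bar\varphi_N\le\bigcap_N\Ker(\Gbar\to\Gbar_N)=1$.

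\textbf{Main obstacle and conclusion.} The crux is to choose the data of Lemma~\ref{HNN EP} coherently in $N$, so that for $N'\subseteq N$ there are compatible pile epimorphisms $\bfB_{N'}\to\bfB_N$ over $\bfA_{N'}\to\bfA_N$; by Lemma~\ref{functorial} these turn $(\Bbar_N)_{N\in\mathcal{N}}$ into an inverse system under the maps $\bar\psi_N$. This is a standard but laborious ``inverse system of projective resolutions'' construction — at stage $N$ one takes the point-stabilizer copies and the free part of $\bfB_N$ large enough to surject onto those built at the later stages $N'\subseteq N$ — and I expect it to be the main technical burden; one also arranges the $\bar\psi_N$ to be surjective, if necessary replacing $\Bbar_N$ by $\bar\psi_N(\Gbar)$, which by the pro-$p$ Kurosh theorem is again a free pro-$p$ product of $L$ with a free pro-$p$ group. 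Granting this, $\Gbar=\varprojlim_N\Bbar_N$ with each $\Bbar_N=L\amalg E_N$, $E_N$ free pro-$p$ and $L$ the image of $L\subseteq\Gbar$, so Lemma~\ref{free product} gives $\Gbar=L\amalg E$ for some free pro-$p$ group $E$. Finally $E\cong\Gbar/L^{\Gbar}$, which by Lemma~\ref{mod L} is isomorphic to $G/\langle G_t\mid t\in T\rangle\amalg F(T/G)$; this is~\eqref{mod R}.
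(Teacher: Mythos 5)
Your basic case and the final identification of $E$ via Lemma \ref{mod L} agree with the paper, but the heart of your argument is missing. The step you label ``Granting this'' --- producing, for a cofinal system of open normal $N\le\Ker\rho$, basic piles $\bfB_N$ together with compatible epimorphisms $\bfB_{N'}\to\bfB_N$ making the solutions $\psi_N\colon\bfG\to\bfB_N$ (hence the $\bar\psi_N$) into an inverse system with $\Gbar=\varprojlim_N\Bbar_N$ --- is exactly the difficulty, not routine bookkeeping. The $\psi_N$ are obtained by solving embedding problems (Proposition \ref{general EP}); such solutions are highly non-canonical, and neither your sketch nor the paper provides a mechanism for choosing infinitely many of them compatibly over a general directed set. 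Knowing only $\bigcap_N\Ker\bar\psi_N=1$ merely embeds $\Gbar$ into a product of groups of the form $L\amalg E_N$, which does not yield $\Gbar=L\amalg E$ (the failure of a closed-subgroup Kurosh theorem is precisely the phenomenon this paper is about). Your fallback --- replacing $\Bbar_N$ by $\bar\psi_N(\Gbar)$ and invoking the pro-$p$ Kurosh theorem --- does not repair this: Kurosh for closed (not open) subgroups requires second countability, and even then it decomposes $\bar\psi_N(\Gbar)$ as a free product of $L$, a free group, \emph{and possibly further nontrivial factors} $\bar\psi_N(\Gbar)\cap L^\sigma$, so the image need not have the form $F_i\amalg L_i$ demanded by Lemma \ref{free product}.

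The paper's proof takes a different route precisely to avoid this coherence problem: since $L$ is finite, Theorem \ref{permext} (together with the conjugacy of finite subgroups into $L$) exhibits $\tG=\ourHNN(G,T,\rho,L)$ as a special HNN-extension $\HNN(L,S,\{L_s\}_{s\in S})$; the kernel $R_0$ of $\tG\to\Gbar$ is then identified with $\langle S'\rangle^{\tG}$ by a centralizer computation ($R_0=R=R'=\langle S'\rangle^{\tG}$, using Lemma \ref{special properties} and Lemma \ref{centralizer}), whence $\Gbar=L\amalg F(S'')$ by Lemma \ref{special properties}(c). The reduction to finite quotients via Lemma \ref{HNN EP} does occur there, but only to verify the pointwise statement $C_{\Fbar}(L_s)=1$ one open normal subgroup at a time, where no compatibility among the chosen solutions is needed. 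To salvage your route you would have to either actually carry out the coherent inverse-system construction (nontrivial even under second countability hypotheses) or, as the paper in effect does, replace the structural conclusion by statements that can be checked quotient by quotient.
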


\begin{proof}
If $\bfG$ is basic, this is Lemma~\ref{section}.

\begin{claim} 
$\tG$ is a special HNN-extension
with base group $L$, say,
\begin{equation}\label{mainHNNspecial}
\tG=\HNN(L, S, \{L_s\}_{s \in S}).
\end{equation}
\end{claim}

\noindent
Proof of Claim 1.
This follows from \eqref{semidirect with L} by Theorem \ref{permext},
provided
every finite subgroup $L_0$ of $\tG$ is conjugate to a subgroup of $L$.

Let $G_L = G\amalg L$.
By \cite[Lemma A.1]{permhnn},
the map $G_L \to \tG$ is injective,
hence, by \cite[Theorem 7.1.2]{R 2017},
$L_0$ is contained in a conjugate of $G_L$.
Hence, by \cite[Theorem A]{HR 85},
$L_0$ is contained in a conjugate of either $L$ or $G$.
In the latter case 
$L_0$ is contained in a conjugate of $G_t$, for some $t \in T$,
by \cite[Proposition 5.4.3]{HJ},
and thus, as $G_t^t \le L$, again contained in a conjugate of $L$.

\medskip
By Remark~\ref{choice stable letters}
we may assume that
$\tF = \langle S \rangle^{\tG}$.

\medskip
Throughout the rest of the proof we use the following notation:
\begin{align*}
T' &= \{t\in T \suchthat G_t \neq 1\},
\\
S' &= \{s\in S \suchthat L_s \neq 1\},
\quad
S''= S \smallsetminus S'
\\
R_0 &= 
\langle (t^{\trsl g})^{-1} g^{-1} t \rho(g)
\suchthat \allowbreak
{t \in T',\, g \in G \rangle^{\tG}},
\\
R &= \langle C_{\tF}(G_t) \suchthat t\in T' \rangle^{\tG}.
\\
R' &= \langle C_{\tF}(L_s)) \suchthat s \in S' \rangle^{\tG}.
\end{align*}

\begin{claim} 
$R_0 = R = R' = \langle S' \rangle^{\tG}$.
\end{claim}

Proof of Claim 2.

Let $t \in T'$ and $g \in G$.
Let $g_t \in G_t$.
Then $g_t^g \in G_t^g = G_{t^{\trsl g}}$.
By \eqref{HNN relations},
$\rho(g_t) = g_t^t$
and
$\rho(g_t^g) = (g_t^g)^{t^{\trsl g}}$,
hence
$$
g_t^{g \, t^{\trsl g}} = \rho(g_t^g) =
\rho(g_t)^{\rho(g)} = g_t^{t\, \rho(g)} ,
$$
whence
$(t^{\trsl g})^{-1} g^{-1} t \rho(g) \in \tF$ centralizes $G_t$,
and therefore is in $R$.
As $R \normal \tG$, this proves $R_0 \le R$.

By Lemma \ref{special properties}(c),
there is an epimorphism $\tG \to L \amalg F(S'')$
with kernel $\langle S' \rangle^{\tG}$, which is $R'$,
by Lemma \ref{special properties}(b).
This epimorphism is injective on $L$, and hence on every $G_t$,
since $G_t$ is conjugate to a subgroup of $L$ in $\tG$.
Thus the image of $G_t$ is a non-trivial finite subgroup of $L$.
The centralizer in $L \amalg F(S'')$
of any non-trivial element of $L$
is contained in $L$, by \cite[Theorem B]{HR 85},
and hence trivially intersects $\langle S''\rangle^{L \amalg F(S'')}$,
the image of $\tF = \langle S \rangle^{\tG}$
in $L \amalg F(S'')$.
Hence $C_{\tF}(G_t) \le R'$ for every $t \in T'$,
whence $R \le R'$.

To show that $R' \le R_0$
--as $R_0$ is,
by Remark~\ref{HNN to pHNN},
the kernel of 
the canonical epimorphism $\theta_G \colon \tG \to \Gbar$--
we have to prove that
$\theta_G(R') = 1$,
that is, that
$\theta_G(C_{\tF}(L_s)) = 1$, for every $s \in S'$.
Since $\theta_G(\tF) = \Fbar$
and since $\theta_G$ is the identity on $L$,
it suffices to show that
$C_{\Fbar}(L_s) = 1$, for every $s \in S'$.

Fix $s \in S'$.
We only use that $L_s$ is a finite nontrivial subgroup of $L$.

If $\bfG$ is basic,
then, by Lemma \ref{section},
$\Gbar$ is the free pro-$p$ product of $L$ with a free pro-$p$ group.
As $L_s$ is finite,
by \cite[Theorem A]{HR 85}
$L_s \le L^{\bar g}$ for some $\bar g \in \Gbar$,
and, as $L_s \ne 1$,
by \cite[Theorem B]{HR 85}
$C_{\Gbar}(L_s) \le L^{\bar g}$.
But $L \cap \Fbar = 1$,
hence $L^{\bar g} \cap \Fbar = 1$,
whence $C_{\Fbar}(L_s) = 1$,
as contended.

In the general case
we show that
$C_{\Fbar}(L_s)$ is contained in
every open normal subgroup $\Mbar$ of $\Gbar$.
Let $\bar\lambda \colon \Gbar \to C := \Gbar/\Mbar$
be the quotient map;
we show that
$\bar\lambda(C_{\Fbar}(L_s)) = 1$.

As $\Fbar$ is open in $\Gbar$, without loss of generality $\Mbar \le \Fbar$,
that is,
$\Ker \bar\lambda \le \Ker \bar\rho$.
Thus there is an epimorphism
$\pi \colon C \to L$
such that
$\pi \circ \bar\lambda = \bar\rho$.
Let $\varphi \colon G \to A$ be the epimorphism
$\varphi = \bar\lambda \circ \zeta_G$,
where $A :=\varphi(G) \le C$.


Let $\rho_A = \pi|_A \colon A \to L$
and $\rho_B = \rho_A \circ \alpha \colon B \to L$.
Then
$$
\rho =
\bar\rho \circ \zeta_G =
\pi \circ \bar\lambda \circ \zeta_G =
\rho_A \circ \varphi =
\rho_A \circ \alpha \circ \psi =
\rho_B \circ \psi.
$$

By Lemma \ref{HNN EP}
we can complete $A$ to a finite pile
$\bfA = (A,X)$ 
and complete $\varphi$
(by a continuous surjective map $\varphi_T \colon T \to X$)
to a morphism of piles
$\varphi \colon \bfG \to \bfA$
and obtain commutative diagrams
\eqref{triangle}
and
\eqref{induced diagram}
with a basic pile $\bfB$.
Moreover, the partition
$\{\varphi_T^{-1}(x) \suchthat x \in X\}$
is finer than the partition induced by
$\bar\lambda \circ \zeta_T \colon T \to C$,
whence there is a map
$\mu \colon X \to C$
such that
$\mu \circ \varphi_T = \bar\lambda \circ \zeta_T$.

Since $\Abar$ satisfies the same defining relations as $\Gbar$,
the homomorphism
$\hat\mu \colon A \amalg L \amalg F(X) \to C$,
which is the inclusion $A \to C$ on $A$, $\bar\lambda|_L$ on $L$, and $\mu$ on $X$,
induces a homomorphism $\bar\mu \colon \Abar \to C$,
such that
$\bar\mu \circ \bar\varphi = \bar\lambda$.
%

Thus we have the following commutative diagram 
$$
\xymatrix{
 \Gbar \ar[d]_{\bar\psi} \ar@{=}[r] 
& \Gbar \ar[d]_{\bar\varphi} \ar@{=}[r] 
& \Gbar \ar[d]_{\bar\lambda} \ar[dr]^{\bar\rho}
\\
 \Bbar \ar[r]^{\bar\alpha} 
& \Abar \ar[r]^{\bar\mu}
& C \ar[r]^{\pi} & L
\\}
$$
In particular,
$\bar\lambda$ factors through $\bar\psi$.
So, to show 
$\bar\lambda(C_{\Fbar}(L_s)) = 1$,
it suffices to show that
$\bar\psi(C_{\Fbar}(L_s)) = 1$.
As $\bar\lambda$ is injective on $L$,
so is $\bar\psi$, hence
$D := \bar\psi(L_s)$ is a finite nontrivial subgroup of $\Bbar$.

Let $\bar\rho_B \colon \Bbar \to L$
be the HNN-extension of
$\rho_B = \rho_A \circ \alpha \colon B \to L$
and let $\Fbar_B = \Ker(\bar\rho_B)$.
Then
$\bar\rho = \bar\rho_B \circ \bar\psi$,
whence $\bar\psi(\Fbar) \le \Fbar_B$.
Thus $\bar\psi(C_{\Fbar}(L_s)) \le C_{\Fbar_B}(D)$.
But $C_{\Fbar_B}(D) = 1$,
as shown above in the case of a basic pile.
This finishes the proof of 
$R' \le R_0$.

Finally, by Lemma \ref{special properties}(b),
$R' = \langle S' \rangle^{\tG}$.

\begin{claim} 
$\Gbar=L\coprod F(S'')$.
\end{claim}

Indeed,
by Remark~\ref{HNN to pHNN},
$\Gbar = \tG/R_0$,
so, by Claim 2, 
$\Gbar = 
\tG/ \langle S' \rangle^{\tG}$.
By Lemma \ref{special properties}(c),
$\Gbar = L \amalg F(S'')$.

\bigskip

Finally, by Claim 3,
$\Gbar/L^{\Gbar} \cong F(S'')$,
so, by Lemma \ref{mod L},
$F(S'') \cong G/K \amalg F(T/G)$.
\end{proof}

\begin{thm}\label{proper}\hspace{9cm}
\begin{enumerate}
\item[(i)]
$\tG$ is proper, i.e.,
the map
$\xi \colon G \amalg L \to \tG$ is injective.
Hence also $\eta_G \colon G \to \tG$ is injective.

\item[(ii)]
$\zeta_G \colon G \to \Gbar$ is injective.

\item[(iii)]
$\Gbar = L \amalg E$,
where $E \cong G/\langle G_t \suchthat t\in T\rangle \amalg F(T/G)$.
\end{enumerate}
\end{thm}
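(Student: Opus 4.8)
The plan is to reduce the statement to the case of finite $L$, already established in Lemmas~\ref{finite pile HNN proper} and~\ref{finite pileHNN structure}, by approximating $L$ and $\bfG$ simultaneously. Write $L=\varprojlim_jL_j$ with $L_j=L/M_j$, where $M_j$ runs over the open normal subgroups of $L$. For each $j$ put $N_j=\rho^{-1}(M_j)$, an open normal subgroup of $G$, and $\tN_j=\langle N_j\cap G_t\suchthat t\in T\rangle$. By Lemma~\ref{mod tilde}, $\bfG_j:=\bfG/\tN_j=(G/\tN_j,\,T/\tN_j)$ is again a projective pile; moreover $\rho$ induces a homomorphism $\rho^{(j)}\colon G/\tN_j\to L_j$ which is injective on every point stabilizer of $\bfG_j$: the stabilizer of $\bar t$ is $G_t\tN_j/\tN_j$, $\Ker\rho^{(j)}=N_j/\tN_j$, and if $g_tn\in N_j$ with $g_t\in G_t$, $n\in\tN_j\le N_j$, then $g_t\in N_j\cap G_t\le\tN_j$, so $N_j\cap G_t\tN_j\le\tN_j$ --- this is exactly why one divides by $\tN_j$ rather than by $N_j$. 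Thus $\Gbar_j:=\pHNN(\bfG_j,\rho^{(j)},L_j)$ is legitimate, and as $L_j$ is finite, Lemmas~\ref{finite pile HNN proper} and~\ref{finite pileHNN structure} give that $\zeta_{G_j}\colon G/\tN_j\to\Gbar_j$ is injective and that $\Gbar_j=L_j\amalg E_j$ with $E_j\cong (G/\tN_j)/\langle(G/\tN_j)_{\bar t}\rangle\amalg F\bigl((T/\tN_j)/(G/\tN_j)\bigr)$, which is free pro-$p$ by Corollary~\ref{kill all stabilizers}.

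Next I would assemble these into an inverse system. By Lemma~\ref{functorial}, applied to the quotient pile morphism $\bfG\to\bfG_j$, the map $\pi_j\colon L\to L_j$ and the identity of the base, one obtains compatible epimorphisms $\Gbar\to\Gbar_j$ whose restriction to the $L$-factor is $\pi_j$, and such that $G\to\Gbar\to\Gbar_j$ equals $\zeta_{G_j}$ precomposed with $G\to G/\tN_j$. The crucial point is that these maps separate points. Since $\bigcup_tG_t$ is closed in $G$ (Lemma~\ref{continuous family}) and meets $\Ker\rho=\bigcap_jN_j$ only in $1$ (as $\rho$ is injective on each $G_t$), the sets $P_j:=(\bigcup_tG_t)\cap N_j$ are closed, decreasing, all contain $1$, and $\bigcap_jP_j=\{1\}$; by a compactness argument every open normal subgroup of $G$ eventually contains $P_j$, hence contains $\tN_j=\overline{\langle P_j\rangle}$, so $\bigcap_j\tN_j=1$, $G=\varprojlim_jG/\tN_j$ and $T=\varprojlim_jT/\tN_j$. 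The same compactness argument inside the compact group $\Gbar$ shows that $\bigcap_j\Ker(\Gbar\to\Gbar_j)=1$, so $\Gbar=\varprojlim_j\Gbar_j$.

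The three assertions now follow. For (i): $\tG=\HNN(G\amalg L,T,\G,\rho)$ is an HNN-extension of $G\amalg L$, and by Lemma~\ref{Ker projective} (applied to the epimorphism $\rho_L\colon G\amalg L\to L$, which is injective on every member of $\G\cup\{L\}$) we have $G\amalg L=\Ker(\rho_L)\rtimes L$ with $\Ker(\rho_L)$ free pro-$p$; hence $\xi$ is injective by \cite[Lemma A.1]{permhnn}, just as in the proof of Lemma~\ref{finite pileHNN structure}, and therefore so is $\eta_G=\xi|_G$. (Alternatively, (i) can be read off the same inverse system, using that each $\tG_j$ is proper.) For (ii): $\Ker\zeta_G\le\bigcap_j\Ker(G\to\Gbar_j)=\bigcap_j\tN_j=1$. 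For (iii): the inclusion $L\to\Gbar$ is split by $\bar\rho$, hence injective, with image $L_j$ in $\Gbar_j$; thus $\Gbar=\varprojlim_j\Gbar_j=\varprojlim_j(E_j\amalg L_j)$ with each $E_j$ free pro-$p$, so Lemma~\ref{free product} yields $\Gbar=E\amalg L$ with $E$ free pro-$p$, and then $E\cong\Gbar/L^{\Gbar}\cong G/\langle G_t\suchthat t\in T\rangle\amalg F(T/G)$ by Lemma~\ref{mod L}.

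The main obstacle is the limit step of the second paragraph: one must choose the approximating piles $\bfG_j$ so that they remain projective piles on which $\rho$ is still injective on point stabilizers (this forces the choice $\tN_j=\langle N_j\cap G_t\rangle$), and then verify that the induced epimorphisms $\Gbar\to\Gbar_j$ separate points. The latter comes down to the elementary but essential fact that in a profinite group a decreasing family of closed subsets $P_j$ with $1\in P_j$ and $\bigcap_jP_j=\{1\}$ satisfies $\bigcap_j\overline{\langle P_j\rangle}=1$; everything else is bookkeeping with the functoriality already recorded in Lemma~\ref{functorial}.
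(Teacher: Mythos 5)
Your overall strategy is the paper's own: approximate $L$ by its finite quotients $L_j$, divide $G$ by $\tN_j=\langle N_j\cap G_t \suchthat t\in T\rangle$ (with the check that $\rho^{(j)}$ stays injective on stabilizers and that $\bfG_j$ stays projective via Lemma~\ref{mod tilde}), prove $\bigcap_j\tN_j=1$ by the compactness argument, and then get (ii) from injectivity of the $\zeta_{G_j}$ and (iii) from Lemma~\ref{finite pileHNN structure}, Lemma~\ref{free product} and Lemma~\ref{mod L}. Parts (ii) and (iii) of your write-up are essentially identical to the paper. The genuine problem is your primary argument for (i): you invoke \cite[Lemma A.1]{permhnn} directly for $\tG$ with $L$ an arbitrary pro-$p$ group, on the strength of $G\amalg L=\Ker(\rho_L)\rtimes L$ with $\Ker(\rho_L)$ free. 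That lemma lives in the virtually free (free-by-finite) setting of \cite{permhnn} (compare Theorem~\ref{permext}), and in this paper it is only ever applied when the ``$L$-part'' is finite: in Lemmas~\ref{finite pile HNN proper} and \ref{finite pileHNN structure} ($L$ finite), and inside the proof of Theorem~\ref{proper} only to $\tG_U$, where $L_U$ is finite and $\Ker\rho_U=N_U/\tN_U$ is free pro-$p$. If the lemma applied for arbitrary $L$, the entire reduction you set up would be superfluous for (i); the paper's detour through the quotients exists precisely because it does not. So the direct citation is a misapplication, not a shortcut.

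Your parenthetical alternative for (i) is the paper's actual proof and is the one to keep, but it needs completing: each $\tG_j$ is proper because $L_j$ is finite, $\bfG_j$ is projective, and $N_j/\tN_j$ is free pro-$p$ (Corollary~\ref{projectivity mod tilde}), so \cite[Lemma A.1]{permhnn} does apply there; and one must identify $\tG=\varprojlim_j\tG_j$ and $\Gbar=\varprojlim_j\Gbar_j$. The latter identifications come from the compatibility of the defining presentations (Corollary~\ref{inverse limit} and its $\ourHNN$ analogue, with $L=\varprojlim_jL_j$), not from ``the same compactness argument inside $\Gbar$'': a priori you do not know $\bigcap_j\Ker(\Gbar\to\Gbar_j)=1$; rather, that equality is a consequence of $\Gbar=\varprojlim_j\Gbar_j$, and it is the inverse-limit description that makes $\xi=\varprojlim_j\xi_j$ and $\zeta_G=\varprojlim_j\zeta_{G_j}$ injective. (A minor further point: Lemma~\ref{Ker projective} is stated for $\rho$ an epimorphism; when you quote it, either replace $L$ by $\rho(G)$ or note that only surjectivity of $\rho_L$ is used.) With these repairs your argument coincides with the paper's proof.
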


\begin{proof}
Let $\U$ be the set of open normal subgroups of $L$.
For every $U \in \U$ let $N_U=\rho^{-1}(U)$.
Since $T$ is closed under $G$-conjugation,
the subgroup $\tN_U=\langle N_U\cap G_t \suchthat t\in T\rangle$
is normal in $G$.

\begin{step}
$\bigcap_{U\in\U} \tN_U = \{1\}$.
\end{step}

Indeed,
let $V$ be an open subgroup of $G$.
As $\{G_t\}_{t\in T}$ is a continuous family,
$\bigcup_{t \in T} G_t$ is a closed subset of $G$ (see Lemma \ref{continuous family}).
So is the kernel $F$ of $\rho$.
Since $\rho_{|G_t}$ is injective for every $t\in T$,
we have
$F \cap (\bigcup_{t \in T} G_t) = \{1\}$.
Also
$F = \bigcap_{U\in\U} N_U$,
so
$\bigcap_{U\in\U} \big(N_U \cap (\bigcup_{t \in T} G_t) \big) = \{1\} \subseteq V$.
By compactness there is $U$ such that
$N_U \cap (\bigcup_{t \in T} G_t) \subseteq V$.
In particular,
$N_U \cap G_t \subseteq V$ for every $t \in T$.
Therefore
$\tN_U = \langle G_t \cap N_U \suchthat t\in T\rangle \subseteq V$.
Since $V$ is an arbitrary open subgroup of $G$, this implies that
$\bigcap_{U\in\U} \tN_U = \{1\}$.

Set
$L_U = L/U$,
$G_U = G/\tN_U$,
$T_U = T/\tN_U$,
and $\bfG_U = (\G_U, T_U)$.
Then
$(G_U)_{t_U} := \Stab_{G_U}(t_U) = 
G_t\tN_U/\tN_U$,
where $t \in T$ is a representative of $t_U \in T_U$.

It follows from the Claim that
$\rho=\varprojlim_{U\in\U}\rho_U$,
where $\rho_U \colon G_U\to L_U$,
for every $U \in \U$,
is the homomorphism induced from $\rho$.
Clearly, $\Ker \rho_U = N_U/\tN_U$.
Let $t \in T$ and let $t_U$ be its image in $T_U$.
By the definition of $\tN_U$ we have $G_t \cap \tN_U = G_t \cap N_U$,
hence
$N_U \cap G_t \tN_U = \tN_U$,
whence $\rho_U$ is injective on $(G_U)_{t_U}$.
Thus we may define
\begin{align}
\tG_U &= \ourHNN(G_U, T_U, \rho_U, L_U),\label{quotientHNN}
\\
\Gbar_U &= \pHNN(G_U, T_U, \rho_U, L_U).\label{quotientpileHNN}
\end{align}

By Lemma \ref{mod tilde}, $\bfG_U$ is projective
and by Corollary \ref{projectivity mod tilde}, $N_U/\tN_U$ is free pro-$p$.
Therefore,
by \cite[Lemma A.1]{permhnn},
\eqref{quotientHNN}
is a proper HNN-extension,
i.e., the map $\xi_U \colon G_U \amalg L \to \tG_U$ is injective.

We have commutative diagrams
\begin{equation}
\xymatrix{
G \amalg L \ar[r]^{\xi}\ar[d]& \tG\ar[d]\\
G_U \amalg L \ar[r]^{\xi_U} &\tG_U\rlap{$,$}
}
\qquad
\xymatrix{
G \ar[r]^{\zeta_G}\ar[d]& \Gbar\ar[d]\\
G_U \ar[r]^{\zeta_{G_U}} &\Gbar_U\rlap{$,$}
}
\end{equation}
where the right vertical maps are
induced from the quotient maps $G \to G_U$ and $T \to T_U$.

\medskip
(i) 
Since $\xi=\varprojlim_{U\in\U}\xi_U$,
we deduce from Corollary \ref{inverse limit} that $\xi$ is injective.

\medskip
(ii) 
By Lemma \ref{finite pile HNN proper}
the HNN-map $\zeta_{G_U}\colon G_U\to \Gbar_U$ is injective.
Since in the above diagram
$\zeta_G=\varprojlim_{U\in\U}\zeta_{G_U}$,
we deduce that $\zeta_G$ is injective.
 


(iii)
Put $K = \langle G_t \suchthat t \in T\rangle$
and $K_U = \langle (G_U)_{t_U} \suchthat t_U\in T_U\rangle$,
for every $U$.
By Lemma \ref{finite pileHNN structure},
$\Gbar_U = L_U \amalg E_U$,
where
$E_U \cong G_U/K_U \amalg F(T_U/G_U)$.
By Lemma \ref{kill all stabilizers},
$G_U/K_U$, and hence also
$E_U$,
is free pro-$p$.
But
$G = \varprojlim_U G_U$,
$L = \varprojlim_U L_U$,
and
$\Gbar = \varprojlim_U \Gbar_U$,
hence by Lemma \ref{free product}
there is a free pro-$p$ group $E$
such that
$\Gbar = L \amalg E$.

It follows that
$E \cong \Gbar/L^{\Gbar}$,
hence by Lemma \ref{mod L},
$E \cong G/K \amalg F(T/G)$.
\end{proof}

In the next lemma we identify $G$ with its image
in $\Gbar$ via the embedding $\zeta_G$.

\begin{lem}
Let $\sigma \in \Gbar$ such that
$L^\sigma \cap G \ne 1$.
Then there is a unique $t \in T$ such that
$L^\sigma \cap G = G_t$ in $\Gbar$.
It satisfies $\sigma t \in L$.
Moreover,
\begin{equation}\label{restriction to subgroup}
\{L^\sigma \cap G \suchthat \sigma \in \Gbar \}
\smallsetminus \{1\}
=\{G_t \suchthat t \in T\}
\smallsetminus \{1\}.
\end{equation}
\end{lem}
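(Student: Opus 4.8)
The plan is to prove first that for \emph{every} $t\in T$ one has $L^{t^{-1}}\cap G=G_t$ inside $\Gbar$, where (as in the statement) $G$ is identified with $\zeta_G(G)$. The inclusion $G_t\subseteq L^{t^{-1}}\cap G$ is read off the defining relation~\eqref{relation of pile HNN}: for $g\in G_t$ we have $t^{\trsl g}=t$, so the relation becomes $t^{-1}gt=\rho(g)\in L$, i.e.\ $g\in tLt^{-1}=L^{t^{-1}}$. Conversely, let $g\in L^{t^{-1}}\cap G$, so $t^{-1}gt\in L$; applying $\bar\rho$ and using $\bar\rho(t)=1$, $\bar\rho|_G=\rho$ and $\bar\rho|_L=\id_L$ gives $t^{-1}gt=\bar\rho(t^{-1}gt)=\rho(g)$, and then~\eqref{relation of pile HNN} forces $t^{\trsl g}=g^{-1}t\rho(g)=t$, i.e.\ $g\in G_t$. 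This already yields the inclusion ``$\supseteq$'' in~\eqref{restriction to subgroup}; it will also give ``$L^\sigma\cap G=G_t$'' once we know $L^\sigma$ equals one of the conjugates $L^{t^{-1}}$.

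So the crux is the claim: \emph{if $L^\sigma\cap G\neq 1$, then $L^\sigma=L^{t^{-1}}$ for some $t\in T$, and then $\sigma t\in L$.} I would first settle the case $L$ finite. Then $L^\sigma$ is finite, so any $1\neq m\in L^\sigma\cap G$ has finite order; since $\bfG=(G,T)$ is a projective pile, $G$ is $\{G_t\suchthat t\in T\}$-projective (Proposition~\ref{pile vs group projectivity}), and by \cite[Proposition 5.4.3]{HJ} the finite subgroup $\langle m\rangle$ is conjugate in $G$ into a member of the conjugation-closed family $\{G_t\suchthat t\in T\}$; hence $m\in G_t$ for a suitable $t\in T$. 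Then $G_t\neq 1$, so by the first paragraph $m\in L^{t^{-1}}$, whence $1\neq m\in L^\sigma\cap L^{t^{-1}}$. Since distinct conjugates of the factor $L$ in the free pro-$p$ product $\Gbar=L\amalg E$ (Theorem~\ref{proper}(iii)) intersect trivially (\cite[Theorem A]{HR 85}), $L^\sigma=L^{t^{-1}}$, and as $L$ is self-normalizing in $L\amalg E$, also $\sigma t\in L$.

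For arbitrary $L$ I would reduce to the finite case through the inverse system from the proof of Theorem~\ref{proper} (the point being that for infinite $L$ the element $m$ need not be torsion, so \cite[Proposition 5.4.3]{HJ} does not apply directly). With $\U$ the open normal subgroups of $L$ and $\Gbar_U=\pHNN(G_U,T_U,\rho_U,L_U)$, $L_U=L/U$, one has $\Gbar=\varprojlim_U\Gbar_U$ (Corollary~\ref{inverse limit}), compatibly with $L=\varprojlim_U L_U$, $T=\varprojlim_U T_U$ and $\zeta_G=\varprojlim_U\zeta_{G_U}$, and each $(G_U,T_U)$ is again a projective pile (Lemma~\ref{mod tilde}). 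Given $1\neq m\in L^\sigma\cap G$, its image $m_U$ lies in $L_U^{\sigma_U}\cap G_U$, and $m_U\neq 1$ for all sufficiently small $U$ since $\bigcap_U\tN_U=1$. The finite case, together with the first paragraph applied in $\Gbar_U$, yields for each such $U$ the \emph{unique} $t_U\in T_U$ with $1\neq(G_U)_{t_U}=L_U^{\sigma_U}\cap G_U$ (uniqueness holding because $s\mapsto(G_U)_s$ is injective on $\{s\in T_U\suchthat(G_U)_s\neq 1\}$, by Proposition~\ref{pile vs group projectivity} and Remark~\ref{pile vs group projectivity b}), and this $t_U$ satisfies $\sigma_U t_U\in L_U$. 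One checks the $t_U$ form a compatible system: if $t_{U'}$ maps to $\bar t\in T_U$, then $\sigma_U\bar t\in L_U$, so $L_U^{\bar t^{-1}}=L_U^{\sigma_U}$ and hence $(G_U)_{\bar t}=L_U^{\sigma_U}\cap G_U=(G_U)_{t_U}\neq 1$, whence $\bar t=t_U$. Thus $t:=\varprojlim_U t_U\in T$, and passing to the limit in $\sigma_U t_U\in L_U$ gives $\sigma t\in L$, so $L^\sigma=L^{t^{-1}}$. Combining with the first paragraph, $L^\sigma\cap G=L^{t^{-1}}\cap G=G_t\neq 1$, so $t\in T'$; its uniqueness is forced by the injectivity of $s\mapsto G_s$ on $T'$. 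Finally~\eqref{restriction to subgroup} follows: ``$\subseteq$'' is the claim just proved and ``$\supseteq$'' is the identity $G_t=L^{t^{-1}}\cap G$ of the first paragraph.

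I expect the reduction to finite $L$ to be the main obstacle --- not the formal passage to the limit, but the verification that the points $t_U$ selected in the finite quotients are coherent enough to define a point of $T=\varprojlim_U T_U$. This is exactly where relative projectivity of the piles $\bfG_U$ is used, via the injectivity of $s\mapsto(G_U)_s$ on the nontrivial locus; without it there would in general be no coherent choice of the $t_U$.
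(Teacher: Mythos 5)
There is a genuine gap, and it sits in your very first paragraph, on which everything else leans. From $g\in L^{t^{-1}}\cap G$ you correctly deduce, by applying $\bar\rho$, that $t^{-1}gt=\rho(g)$ in $\Gbar$, hence that $g^{-1}t\rho(g)=t$ in $\Gbar$, i.e.\ that the \emph{images} of $t^{\trsl g}$ and $t$ in $\Gbar$ coincide. But the step ``so $t^{\trsl g}=t$, i.e.\ $g\in G_t$'' needs the canonical map $T\to\Gbar$ to be injective on the $G$-orbit of $t$, and this is nowhere established (the presentation \eqref{relation of pile HNN} alone does not give it; properness statements of this kind are exactly the hard content of Theorem \ref{proper}, which only treats $\zeta_G$). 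Worse, a short computation shows that the images of $t^{\trsl g}$ and $t^{\trsl h}$ in $\Gbar$ agree precisely when $hg^{-1}\in L^{t^{-1}}\cap G$, so injectivity on the orbit of $t$ is \emph{equivalent} to the equality $L^{t^{-1}}\cap G=G_t$ you are trying to prove: the argument is circular. Since your finite case concludes via ``$L^{\sigma}\cap G=L^{t^{-1}}\cap G=G_t$'', your compatibility check for the $t_U$ uses $(G_U)_{\bar t}=L_U^{\sigma_U}\cap G_U$, and your final step again invokes the first paragraph, the gap propagates through the whole proof.

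The paper avoids this by never using injectivity of $T\to\Gbar$. In the finite case it applies \cite[Proposition 5.4.3]{HJ} to the \emph{whole} finite subgroup $L^\sigma\cap G$ (not to a single element), obtaining $L^\sigma\cap G\le G_t$; then a direct computation, $1\ne(L^\sigma\cap G_t)^t\le L^{\sigma t}\cap L$, together with \cite[Lemma 3.1.10]{HJ} in $\Gbar=L\amalg E$, gives $\sigma t\in L$, so $L^\sigma=L^{t^{-1}}\supseteq G_t$ (only the easy inclusion of your Claim~1 is needed), whence equality because $L^\sigma\cap G\le G_t$. The identity $L^{t^{-1}}\cap G=G_t$ (for $G_t\ne1$) is then deduced only at the very end, as a consequence of the uniqueness statement, not used as an input. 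Your reduction to finite $L$ via the inverse system of Theorem \ref{proper} and the coherence argument for the $t_U$ are essentially the paper's, and would go through once the finite case and the equality are repaired along these lines; as written, however, the central inclusion $L^{t^{-1}}\cap G\subseteq G_t$ is assumed rather than proved.
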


\begin{proof}
\begin{claim} 
$L^{t^{-1}} \cap G \ge G_t$ in $\Gbar$.
\end{claim}

Indeed,
$G_t^t = \rho(G_t) \in L \le \Gbar$, so
$$
L^{t^{-1}} \cap G \ge
L^{t^{-1}} \cap G_t =
(L \cap \rho(G_t))^{t^{-1}} = 
\rho(G_t)^{t^{-1}} = 
G_t.
$$

\begin{claim} 
If $L^\sigma \cap G \le G_t$,
then $L^\sigma \cap G = G_t$
and $\sigma t \in L$.
\end{claim}

Indeed,
by the assumption,
$L^\sigma \cap G = L^\sigma \cap G_t$,
so
$$
1 \ne (L^\sigma \cap G)^t =
(L^\sigma \cap G_t)^t =
L^{\sigma t} \cap \rho(G_t) =
(L^{\sigma t} \cap L) \cap \rho(G_t).
$$
In particular,
$L^{\sigma t} \cap L \ne 1$ in $\Gbar = L \amalg E$
(Theorem \ref{proper}(iii)).
By \cite[Lemma 3.1.10]{HJ}
this implies that
$\sigma t \in L$.
Therefore
$L^\sigma = L^{t^{-1}}$, hence
$L^\sigma \cap G = G_t$,
by Claim 1.

\medskip
Now, if $L$ is finite,
then $L^\sigma \cap G$
is a finite subgroup of $G$.
By \cite[Proposition 5.4.3]{HJ}
there is $t \in T$ such that
$L^\sigma \cap G \le G_t$.
So $L^\sigma \cap G = G_t$ by Claim 2.
By Proposition \ref{pile vs group projectivity}(b)
such $t$ is unique.

In the general case we have
$G = \varprojlim_U G_U$,
$T = \varprojlim_U T_U$,
$L = \varprojlim_U L_U$,
and
$\Gbar = \varprojlim_U \Gbar_U$,
with finite $L_U$,
as in Theorem \ref{proper}.
Let $\pi_U \colon \Gbar \to \Gbar_U$
and $\pi_{U,V} \colon \Gbar_V \to \Gbar_U$,
for $V \le U$,
be the maps of the inverse system.
We may assume that
$\pi_U(L^\sigma \cap G) \ne 1$ for every $U$.
Since $\pi_U(L^\sigma \cap G) \le L_U^{\pi_U(\sigma)} \cap G_U$,
we have
$L_U^{\pi_U(\sigma)} \cap G_U \ne 1$.
So by the preceding case there is a unique $t_U \in T_U$
such that
$L_U^{\pi_U(\sigma)} \cap G_U = (G_U)_{t_U}$
for every $U$.

If $V \le U$ and $\pi = \pi_{U,V}$, then 
$L_V^{\pi_V(\sigma)} \cap G_V = (G_V)_{t_V}$
and
$\pi(L_V^{\pi_V(\sigma)}) = L_U^{\pi_U(\sigma)}$,
$\pi(G_V) = G_U$,
so
$(G_U)_{\pi(t_V)} = \pi((G_V)_{t_V}) \le (G_U)_{t_U}$.
By Proposition \ref{pile vs group projectivity}(b),
$\pi(t_V) = t_U$.

It follows that there is a unique $t \in T$ such that
$\pi_U(t) = t_U$ for every $U$.
Therefore
$\pi_U(L^\sigma \cap G) \le \pi_U(G_t)$.
Hence
$L^\sigma \cap G \le G_t$.
By Claim 2,
$L^\sigma \cap G = G_t$.
As $\pi_U(\sigma t) \in L_U = \pi_U(L)$ for every $U$,
we have
$\sigma t \in L$.

The above proves that the left-handed side of 
\eqref{restriction to subgroup}
is contained in the right-handed side.
Conversely, let $t \in T$ such that $G_t \ne 1$.
By Claim 1,
$G_t \le L^{t^{-1}} \cap G$.
In particular
$L^{t^{-1}} \cap G \ne 1$.
So there is a unique $s \in T$ such that
$L^{t^{-1}} \cap G = G_s$.
Thus,
$G_t \le G_s$.
By Proposition \ref{pile vs group projectivity}(b),
$s = t$.
Hence
$L^{t^{-1}} \cap G = G_t$.
\end{proof}

\begin{proof}[Proof of Theorem \ref{main small}]
By Proposition~\ref{pile vs group projectivity}
and Remark \ref{pile vs group projectivity b},
$\bfG = (G,T)$ is a projective pile.
By Theorem \ref{proper},
$\Gbar = L \amalg F$, where $F$ is free,
and $\zeta = \zeta_G \colon G \to \Gbar$ is injective.
The rest is \eqref{restriction to subgroup}.
\end{proof}

Theorem \ref{smain} simply follows from Theorem \ref{main small}
if one puts $\rho$ to be the identity map of $G$ to its copy $L$.

Theorem \ref{into free product} follows from Theorem \ref{proper}
observing that $\zeta_G(G_t)$ in its proof is conjugate
to $\rho(G_t)$ for every $t\in T$.

\bigskip
\begin{proof}[Proof of Theorem \ref{inverse limit of free products}]

(i) $\Leftrightarrow$ (ii):
By Proposition \ref{pile vs group projectivity}
and Remark \ref{pile vs group projectivity b}.

(ii) $\Rightarrow$ (iii):
Let $\zeta \colon G\to \Gbar := L\amalg F$
be the embedding of Theorem \ref{smain}.
Put $\That = \{L^\sigma \suchthat \sigma \in \Gbar\}$
and let $\Gbar$ act on $\That$ by conjugation.
Then $\Stab_{\Gbar}(L^\sigma) = L^\sigma$
and if $L^\sigma \ne L^\tau$, then
$L^\sigma \cap L^\tau = 1$
(\cite[Lemma 3.1.10]{HJ}).
Hence the action of $G$ on $\That$ via $\zeta$ is such that
$\Stab_G(L^\sigma) = L^\sigma \cap \zeta(G)$
and, by Theorem \ref{smain}, it satisfies (a) for $\That$.

Write $L=\varprojlim L_i$,
$F =\varprojlim F_i$
as inverse limits
of finite quotient groups of $L$
and finitely generated free quotients of $F$, respectively.
Put $\Gbar_i = L_i \amalg F_i$
and let $G_i$ be the image of $\zeta(G)$ in $\Gbar_i$.
Then $\Gbar_i$, and hence also $G_i$,
is countably generated.
Put $T_i = \{L_i^\sigma \suchthat \sigma \in \Gbar_i\}$,
for every $i$,
then $(G,\That) = \varprojlim_i (G_i,T_i)$.

By the pro-$p$ version of Kurosh Subgroup Theorem
\cite[Theorem 9.7]{Haran}
$G_i$ is a free pro-$p$ product
$\coprod_{x_i \in X_i} (G_{x_i}) \amalg F'_i$,
for some closed set $X_i$
of representatives of the $G_i$-orbits in $T_i$
and some free pro-$p$ group $F'_i$.

(iii) $\Rightarrow$ (i):
Put $\bfG = (G,\That)$
and $\bfG_i = (G_i,T_i)$, for every $i$.
We have to solve a finite embedding problem \eqref{EP} for $\bfG$.
But $\varphi$ factors via some $\bfG_i$.
This means that there exist
$\nu_i \colon \bfG \to \bfG_i$,
$\varphi_i \colon \bfG_i \to \bfA$,
such that $\varphi = \varphi_i \circ \nu_i$.
By \cite[Proposition 5.4.2]{HJ},
$G_i$ is $\{(G_i)_t \suchthat t \in T_i\}$-projective,
so, by Proposition ~\ref{pile vs group projectivity},
$\bfG_i$ is projective.
Therefore there exists a solution $\psi_i$ of
the embedding problem $(\alpha,\varphi_i)$.
Then $\psi_i \circ \nu_i$ is a solution of \eqref{EP}.

(i) $\Rightarrow$ (iv):
Let again $\zeta \colon G\to \Gbar := L\amalg F$
be the embedding of Theorem \ref{smain}.
Let $X$ be a basis of $F$, that is, 
a profinite subspace of $F$ such that $F = F(X)$.
Then $\Gbar$ can be viewed as the special HNN-extension
$\Gbar = \HNN(L,X,\caL)$,
where $\caL$
is a family of copies $L_x$ of $1$, for every $x \in X$.
Thus $\Gbar$ is the fundamental group
of a bouquet of groups $\Gamma$,
with $L$ attached to the unique vertex
and $X$ the space of edges, with $1$ attached to each of them
(\cite[Example 6.2.3(e)]{R 2017}).

Let $S$ be the standard pro-$p$ graph of $\Gamma$
(\cite[Section 6.3]{R 2017}).
By \cite[Corollary 6.3.6]{R 2017},
$S$ is a pro-$p$ tree.
By \cite[Lemma 6.3.2(b)]{R 2017}
$\Gbar$ acts on $S$,
with trivial edge stabilizers
and vertex stabilizers being conjugates of $L$.
Then the restriction of this action to $\zeta(G)$
gives the required action. 

(iv) $\Rightarrow$ (i):
Let $G$ be a pro-$p$ group that acts on a pro-$p$ tree $\Gamma$.
Let $V$ be its set of vertices
and let $U$ be an open normal subgroup of $G$.
Then $\tU=\langle U\cap G_v\mid v\in V\rangle$
is a normal subgroup of $G$
and $G_U=G/\tU$ acts on $\Gamma_U=\Gamma/\tU$.
Also $U$ acts on $\Gamma$, hence
by \cite[Proposition 4.1.1]{R 2017}
$\Gamma_U$ is a pro-$p$ tree.
Moreover, $U/\tU$ acts freely on $\Gamma_U$
and hence is free pro-$p$ \cite[Theorem 4.1.2]{R 2017}.
So $G_U$ is virtually free pro-$p$
and every nontrivial torsion element $g \in G_U$ belongs to
some vertex stabilizer $(G_U)_v$
by \cite[Theorem 4.1.8]{R 2017}.

In fact,
even $C_{G_U}(g)\leq (G_U)_v$,
since for any $c\in C_{G_U}(g)$,
$g(cv) = cv$, so
$1 \ne g \in (G_U)_v \cap (G_U)_{cv}$
 and since edge stabilizers are trivial,
by \cite[Corollary 4.1.6]{R 2017} we have
$v = cv$, that is, $c \in (G_U)_v$.

We can now apply \cite[Corollary 6.3]{permhnn}
to deduce the existence of an embedding
$\zeta_U \colon G_U \to H=G/U\amalg F$
for some free pro-$p$ group $F$.
Then by \ref{sub free prod}(d)
$G_U$ is
$\G_U = \{(G_U)_v \suchthat v\in V\}
= 
\{G\cap (G/U)^h \suchthat h \in H\}$-projective.
Since $(G,\G) = \varprojlim_U (G_U,\G_U)$,
$G$ is $\G$-projective,
because every finite embedding problem factors via some $(G_U,\G_U)$.

\end{proof}

\end{document}